%%%%%%%%%%%%%%%%%%%%%%%%%%%%%%%%%%%%%

\documentclass[oneside,a4paper,limits]{amsart}
\usepackage[left=3cm, right=2cm, top=2cm]{geometry}
\usepackage{esint}
\usepackage{amsmath}
\usepackage{amssymb}
\usepackage{amsthm}
\usepackage{amscd}
\usepackage[ansinew]{inputenc}
\usepackage{cite}
\usepackage{bbm}
\usepackage{color}
\usepackage[english=american]{csquotes}
\usepackage[final]{graphicx}
\usepackage{hyperref}
\usepackage{calc}
\usepackage{mathptmx}
\usepackage{bm}
\usepackage{enumerate}
\usepackage{amsmath,amsthm,amssymb,enumerate, fancyhdr}
\usepackage[english]{babel}
\usepackage[square,sort,comma,numbers]{natbib}

\usepackage{thmtools}

\newcommand{\curl}{{\mathrm{curl}}}
\newcommand{\setR}{\mathbb{R}}
\newcommand{\divergence}{\mathrm{div}}
\newtheorem*{theorem*}{Theorem}

%SEBASTIAN STUFF
%\usepackage[frak,operator]{paper_diening}
\newcommand{\meantmp}[2]{#1\langle{#2}#1\rangle}
\newcommand{\mean}[1]{\meantmp{}{#1}}

%END SEBASTIAN STUFF

%\linespread{1.6}

\graphicspath{{../Pictures/}}

\numberwithin{equation}{section}

\newtheoremstyle{thmlemcorr}{10pt}{10pt}{\itshape}{}{\bfseries}{.}{10pt}{{\thmname{#1}\thmnumber{ #2}\thmnote{ (#3)}}}
\newtheoremstyle{thmlemcorr*}{10pt}{10pt}{\itshape}{}{\bfseries}{.}\newline{{\thmname{#1}\thmnumber{ #2}\thmnote{ (#3)}}}
\newtheoremstyle{remexample}{10pt}{10pt}{}{}{\bfseries}{.}{10pt}{{\thmname{#1}\thmnumber{ #2}\thmnote{ (#3)}}}
%\newtheoremstyle{as}{10pt}{10pt}{}{}{\bfseries}{.}{10pt}{{\thmname{#1}\thmnumber{ A#2}\thmnote{ (#3)}}}

\theoremstyle{thmlemcorr}
\newtheorem{theorem}{Theorem}
\numberwithin{theorem}{section}
\newtheorem{lemma}[theorem]{Lemma}
\newtheorem{corollary}[theorem]{Corollary}
\newtheorem{proposition}[theorem]{Proposition}

\newtheorem{definition}[theorem]{Definition}

\theoremstyle{remexample}
\newtheorem{remark}[theorem]{Remark}

%\theoremstyle{as}
%\newtheorem{assumption}{Assumption}
%\numberwithin{assumption}{section}

\newcommand{\Acal}{\mathcal{A}}

  \providecommand{\eta}{{\boldsymbol{\eta}}}

\newcommand{\Ibb}{\mathbb{I}}

\newcommand{\Rbb}{\mathbb{R}}

\DeclareMathOperator{\supp}{supp}

\newcommand{\norm}[1]{\|#1\|}

\newcommand{\abs}[1]{|#1|}

\newcommand{\absBB}[1]{\biggl|#1\biggr|}

\newcommand{\skp}[2]{\langle #1 , #2 \rangle}

\newcommand{\N}{\mathbb{N}}

%\newcommand{\dpr}[1]{\lrangle{#1}}	

% Automatic assumption numbering : Write \nextas{as:LABEL} to define the assumption with number (An), where n is automatically incremented. Refer to the assumption as usual via~\ref{as:LABEL}.
\newcounter{assumption}
\makeatletter
\newcommand{\nextas}[1]{%
  ~\refstepcounter{assumption}%
   \protected@write \@auxout{}{\string\newlabel{#1}{{(A\theassumption)}{\thepage}{(A\theassumption)}{#1}{}}}%
   \hypertarget{#1}{(A\theassumption)}%
}
\makeatother

% aus symbol-a4.pdf
 
\def\XXint#1#2#3{{\setbox0=\hbox{$#1{#2#3}{\int}$} 
\vcenter{\hbox{$#2#3$}}\kern-.5\wd0}}

% own invention...

\renewcommand{\epsilon}{\varepsilon}
\renewcommand{\phi}{\varphi}

\newcommand{\Mindrila}{{M{\^i}ndril\u{a}}}
\newcommand{{\Bogovskij}}{Bogovski\u{i}}

\setlength{\textwidth}{157mm}
\setlength{\oddsidemargin}{4.0mm}
\setlength{\evensidemargin}{4.0mm}
\setlength{\topmargin}{-6.0mm}
\setlength{\textheight}{243mm}

\begin{document}

%% TITLE MATTERS

\title[]{Existence of steady very weak solutions to Navier-Stokes equations with non-Newtonian stress tensors}

\author{Claudiu \Mindrila}
\address{{\textit{C.~\Mindrila}:} Katedra matematick\'{e} anal\'{y}zy, Charles University Prague, Sokolovsk\'{a} 83, 186 75 Praha 8, Czech Republic.}
\email{mindrila@karlin.mff.cuni.cz}

\author{Sebastian Schwarzacher}
\address{\textit{S.~Schwarzacher:} Katedra matematick\'{e} anal\'{y}zy, Charles University Prague, Sokolovsk\'{a} 83, 186 75 Praha 8, Czech Republic.}
\email{schwarz@karlin.mff.cuni.cz}
%
%\author{Juan J.\ L.\ Vel\'{a}zquez}
%\address{\textit{J.\ J.\ L.\ Vel\'{a}zquez:} Institute for Applied Mathematics, University of Bonn, Endenicher Allee 60, D-53115 Bonn, Germany.}
%\email{velazquez@iam.uni-bonn.de}
%

%% PDF MATTERS

\hypersetup{
  pdfauthor = {},
  pdftitle = {},
  pdfsubject = {},
  pdfkeywords = {}
}

%% START OF CONTENT

\maketitle
\thispagestyle{empty}

%\hrule\vspace{1pt}

\begin{abstract}
We provide existence of very weak solutions and a-priori estimates for steady flows of non-Newtonian fluids  when  the right-hand sides  are not in the natural existence class. This includes stress laws that depend non-linearly on the shear rate of the fluid like power-law fluids.
To obtain the a-priori estimates  we make use of a refined solenoidal Lipschitz truncation that preserves zero boundary values. We provide also estimates in (Muckenhoupt) weighted spaces which permit us to regain a duality pairing, which than can be used to prove existence. 
Our estimates are valid even in the presence of the convective term. They are obtained via a new comparison method that allows to ''cut out'' the singularities of the right hand side such that the skew symmetry of the convective term can be used for large parts of the right hand side. 
\vspace{4pt}

\noindent\textsc{MSC (2010):35Q35 (primary); 35Q30, 35J60, 35J70, 35J75.} 

\noindent\textsc{Keywords:} Navier-Stokes equations, Lipschitz trunctions, Very weak solutions, Weighted estimates

\vspace{4pt}

\noindent\textsc{Date:} \today{}. % (version 1.0).
\end{abstract}

%\tableofcontents
\setcounter{tocdepth}{2}

\section{Introduction}
\noindent
In this work we are concerned with the existence and regularity of models prescribing the motion of an incompressible non-Newtonian fluid under singular forcing. Throughout the paper we assume that $\Omega\subset\setR^3$ is a bounded Lipschitz domain and $p \in (1,\infty) $. We consider the following steady system of Navier-Stokes equations
\begin{align}
\label{eq:navier-stokes}
\begin{cases}
\mathrm{div}(u(x)\otimes u(x))-\mathrm{div}A(x,\varepsilon u(x))+\nabla\pi(x)=-\mathrm{div}\ f(x) & \mathrm{in}\ \ \Omega\\
\mathrm{div}\ u=0 & \mathrm{in}\ \ \Omega\\
u=0 & \mathrm{on}\  \partial\Omega.
\end{cases}
\end{align}
Here the unknowns are the velocity $u:\Omega\to\mathbb{R}^{3}$ and the pressure $\pi:\Omega\to\mathbb{R}$. The force is  $f:\Omega\to \mathbb{R}^{3\times 3}$  and $\varepsilon v:=\frac{1}{2}\left(\nabla v+\nabla v ^{T}\right)$ is the symmetric gradient.
The prescribed tensor $A:\Omega\times\mathbb{R}^{3\times3}\to\mathbb{R}^{3\times3}$ is a Carath\' eodory mapping; this means it is measurable in the first variable and continuous in the second variable. Additionally, we assume coercivity, boundedness and monotonicity on $A$, that is: for all $z_1 , z_2 \in \mathbb{R}^{3\times 3}$ and almost all $x\in \Omega$  the following relations hold: 
\begin{equation}
\label{eq:coerc}
A\left(x,z_{1}\right)\cdot z_{1} \ge C_1\left|z_{1}\right|^{p}-C_3,\ \mathrm{coercivity}
\end{equation}

\begin{equation}
\label{eq:bound}
\left|A\left(x,z_{1}\right)\right|\le C_2\left|z_{1}\right|^{p-1}+C_3^{\frac{p-1}{p}},\ \mathrm{boundedness}
\end{equation}
\begin{equation}
\label{eq:mon}
\left(A\left(x,z_{1}\right)-A\left(x,z_{2}\right)\right)\cdot\left(z_{1}-z_{2}\right)\ge0,\ \mathrm{monotonicity}.
\end{equation}
Observe that in case $A(x,z)\equiv \frac{\nu}{2} z$, with $\nu$ being the constant viscosity the system \eqref{eq:navier-stokes} becomes the steady  Navier Stokes equation:
\begin{align}
\label{NS}
\begin{cases}
\mathrm{div}(u(x)\otimes u(x))-\nu \Delta u  +\nabla\pi(x)=-\mathrm{div}\ f(x) & \mathrm{in}\ \ \Omega\\
\mathrm{div}\ u=0 & \mathrm{in}\ \ \Omega\\
u=0 & \mathrm{on}\  \partial\Omega.
\end{cases}
\end{align}
%We are concerned about a-priori estimates and the existence of solutions 
For the Navier-Stokes equation~\eqref{NS} in case $f\in L^q(\Omega)$ and $q\geq 2$ the existence of a solution follows by standard fixed point methods.\footnote{Please observe that we abreviate $f\in L^q(\Omega)$ for scalar functions as well as for vector valued (matrix valued) functions.} In case $q<2$ the existence can be achieved by approximating $f$ with functions in $L^2$ provided some a-priori estimates are satisfied in a space that embeds compactly in $L^2$. To the best of our knowledge the regime of exponents $q$ for which an existence theory (and respective a-priori estimates) for~\eqref{NS} are available (in three space dimensions) is $q\geq \frac{3}{2}$; see~\cite{NSA,NS1,NS2,NS3,NS3b} and the references therein. These results can not be transferred to non-linear stress tensors $A(x,\epsilon(u))$ directly (in particular not to stress tensors that depend on the solution) since they rely on the linearity of the Stokes operator. In this paper we develop an independent methodology that is suitable for non-Newtonian fluids. Our results  seem to be the first estimates in the regime of what we call \textit{very weak solutions } to equations including  non-linear stress tensors and the convective term. However, the range of admissible exponents $q$ is smaller. In case $p=2$ and under the additional hypothesis of \textit{linearity at infinity of the stress tensor} (namely~\eqref{eq:uhlenbeck}) our methods do imply the existence of solutions for exponents $q\geq \frac{12}{7}(>\frac{9}{6}=\frac{3}{2})$ (see Theorem~\ref{thm:new-main} below).
%Nevertheless, it seems to be a new way of proofing regularity estimates by introducing {\em comparison estimates} that might turn out to be useful even for classical Navier-Stokes equations~\eqref{NS}.% with the potential for further use.%We wish to point out that it might very well be of further significance in the analysis for Navier Stokes equations.\\

In case of $p\neq 2$ we recover the non-Newtonian fluids of Stokes type; in particular the  so-called $p$-fluids,  where $A\left(x,z\right):=\left|z\right|^{p-2}z$ ; they were  introduced by Ladyzenskaya and Lions in the late 60s ~\cite{Lady69,Lion69}.
We  point out that in this case the viscosity $\nu$ depends on the shear rate $|\varepsilon u |$ as $\nu(t) \equiv t^{p-2}$; the fluid can become shear thinning if $p<2$ or shear thickening if $p>2$. The results introduced here are new even for the following non-linear \textit{Stokes} type system:
\begin{equation}\label{eq:model}
\begin{cases}
-\mathrm{div}A(x,\varepsilon u(x))+\nabla\pi=-\mathrm{div}\ f & \mathrm{in}\ \ \Omega\\
\mathrm{div}\ u=0 & \mathrm{in}\ \ \Omega\\
u=0 & \mathrm{on}\  \partial\Omega.
\end{cases}
\end{equation}
The existence theory is motivated by the model case of $p$-fluids which are minimizers of the functional 
\[
\mathcal{F}:W_{0,\mathrm{div}}^{1,p}\left(\Omega\right)\ni v\longmapsto\int_{\Omega}\frac{\left|\varepsilon v\right|^{p}}{p}\mathrm{d}x-\int_{\Omega}f{\cdot}\nabla v\mathrm{d}x\in\mathbb{R},
\] 
 since the respective Euler Lagrange equation is
\begin{equation}\label{eq:test}
\int_{\Omega}\left|\varepsilon u\right|^{p-2}\varepsilon u{\cdot}\varepsilon\varphi\mathrm{d}x=\int_{\Omega}f{\cdot}\nabla\varphi\mathrm{d}x\ \text{for all}\ \varphi\in C_{0,\mathrm{div}}^{\infty}\left(\Omega\right).
\end{equation}
%This existence approach fails in case $q<p'$ since the coercivity is then lost.
%In particular, the solution is not an admissible test function anymore. This already ensures a-priori estimates.
%The pressure can then be recovered by from now usual arguments that are due to Bogovskij. However this will be completed in the very last step, and until then we will tipically consider divergence-free test functions that will make the pressure disappear. The $ \varphi : \Omega \to \mathbb{R}^{3}$ with $\text{div} \varphi =0$ -in the classical or in the distributional sense, accordingly-will be called \textit{solenoidal}.
This existence approach fails in case $q<p^{\prime}$ since in this case we cannot guarantee for the coercivity of the functional. 
That relates to the fact that in this case the class of test functions has to be restricted severely. In particular we cannot use the solution $u$ as a test function if $u \in W_{0,\mathrm{div}}^{1,\tilde{q}}\left(\Omega\right)$ with $\tilde{q}<p$, only. 
Nevertheless it is possible to give the following definition of what we call a {\em very weak solution} for non-linear PDEs~\eqref{eq:navier-stokes}.
\begin{definition}\label{def:vws}
We say a function $u\in W^{1,1}_{0,\divergence}\cap L^2(\Omega)$ such that $\abs{\varepsilon u}^{p-1}\in L^1(\Omega)$ is a  \textbf{very weak solution} to \eqref{eq:navier-stokes}, if $f \in L^{q}(\Omega)$ with  $q\in [1, p^{\prime})$ and 
\begin{equation}\label{pde}
\int_{\Omega}-(u\otimes u){\cdot}\nabla \varphi+A(\cdot,\varepsilon u){\cdot}\varepsilon\varphi-\pi \divergence(\varphi)\mathrm{d}x=\int_{\Omega}f{\cdot}\nabla\varphi\mathrm{d}x\ \text{for all}\ \varphi\in C_{0}^{\infty}\left(\Omega\right).
\end{equation}
The definition for very weak solutions to~\eqref{eq:model} is analogous.
\end{definition}
\subsection*{Outline of the paper.} The paper is structured as follows: In the next section we state our main results which are the a-priori estimates for the non-Newtonian fluid laws stated above and an analytic result about solenoidal Lipschitz approximations of Sobolev functions that is necessary for the analysis  but might be of independent interest. In Section~3 we introduce some notation and preliminary results from previous papers. In Section~4 we prove the a-priori estimates and in Section~5 the existence of solutions. In the appendix the solenoidal Lipschitz approximation is constructed.

\section{Main results.}
\noindent 
The main results are a-priori estimates for systems of Stokes and Navier-Stokes type with non-Newtonian stress tensors. Up to our knowledge the paper contains the first a-priori estimates for very weak solutions of non-Newtonian fluids where the convective term is included even in the case when $p=2$. 
The a-priori estimates are given in classical Lebesgue spaces and in weighted spaces. In particular in weighted spaces where the weight {\em depends on the right hand side}. Such weighted estimates are crucial in order to apply the existence theory developed in~\cite{BuDS16} and~\cite{BBS16}. In the references it is also explained what are the advantage of the method and why other attempts are not applicable. In particular the monotonicity of the operator can {\em not be used directly}. This can be seen by observing that the (monotone) pairing $A(\cdot,\varepsilon u)\cdot \varepsilon u$ can not be expected to be an integrable function. Actually, the key observation in~\cite{BuDS16,BBS16} is that the  integrability of the monotone pair in a  certain weighted Lebesgue space can be eventually reestablished. The existence results in this work rely on~\cite[Theorem~1.9]{BBS16} (a div-curl type Lemma mentioned here as Theorem~\ref{thm:swbdcl}) which allows to reestablish the non-linearity. 

A central technical tool for the involved analysis is the so-called Lipschitz truncation method. The result we introduce in this section is a further refinement of the truncation introduced in~\cite{BDS13} and~\cite{BBS16} and it is  needed for the a-priori estimates. 
 %The idea there was to obtain a sequence of approximate solutions and then to pass to (weak) limits in appropriate Sobolev spaces. The main point, which was already remarked in \cite{BuDS16} is that $L^q$ a-priori estimates~\eqref{eq:mt1} do not seem to be enough to identify the non-linearity. In the methodology introduced there further estimates are needed, namely weighted estimates as~\eqref{eq:mt2}. Then it  follows from the weighted a-priori estimates, by using the compactness result contained in 
\subsection{Existence and a-priori estimates for the p-Stokes system for $p\in (1,\infty)$}
The first result we introduce implies that for $q$ close enough to  $p^{\prime}$  and $f \in L^{q}(\Omega)$  there exists a very weak solution $u\in W_{0,\mathrm{div}}^{1,q\left(p-1\right)}\left(\Omega\right)$ to \eqref{eq:model} provided that \eqref{eq:coerc}--\eqref{eq:mon} are satisfied.

  \begin{theorem}\label{thm:main2}
Let $p\in (1,\infty)$ and let  $\Omega$  be a bounded, open and Lipschitz domain. If  $A$ satisfies \eqref{eq:coerc}--\eqref{eq:mon}, then there exists an $\varepsilon_0 >0$  depending on $\Omega$, $C_1,C_2,C_3$ and $p$ such that  if $ q \in [p^{\prime}-\varepsilon_0, p^{\prime}]$ and $f\in L^{q}\left(\Omega\right)$ there exists a weak solution $\left(u,\pi\right)\in W_{0,\mathrm{div}}^{1,q(p-1)}\left(\Omega\right)\times L_{0}^{q}\left(\Omega\right)$ to~\eqref{eq:model}.
Furthermore\footnote{ For a definition of $M$ see \eqref{eq:max}.}  
\begin{align}
\label{eq:mt1}
\int_{\Omega}\left|\nabla u\right|^{q\left(p-1\right)}+\abs{\pi}^q\mathrm{d}x\le c\left(C_1,C_2,C_3,p,q,\Omega\right)\left(\int_{\Omega}\left|f\right|^{q}\mathrm{d}x+1\right)
\end{align} 
and 
\begin{align}
\label{eq:mt2}
\int_{\Omega}(\left|\nabla u\right|^{p}+\left|\pi\right|^{p'})M\left(\left|f\right|+1\right)^{q-p^{\prime}}\mathrm{d}x\le c\left(C_1,C_2,C_3,p,q,\Omega\right)\left(\int_{\Omega}\left|f\right|^{q}\mathrm{d}x+1\right)
\end{align}
for some positive constant $c\left(C_1,C_2,C_3,p,q,\Omega\right)>0$ .
\end{theorem}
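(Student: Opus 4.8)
The plan is to produce $(u,\pi)$ as a limit of solutions of regularised problems with smooth data, with essentially all of the difficulty concentrated in deriving the bounds \eqref{eq:mt1}--\eqref{eq:mt2} uniformly along the regularisation.

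\textbf{Regularisation.} Pick $f_k\in L^\infty(\Omega;\R^{3\times3})$ with $f_k\to f$ in $L^q(\Omega)$ (truncation and mollification). For fixed $k$ one has $\divergence f_k\in W^{-1,p'}$, so by monotone operator theory (Browder--Minty), using coercivity \eqref{eq:coerc}, growth \eqref{eq:bound} and monotonicity \eqref{eq:mon}, there is a weak solution $u_k\in W^{1,p}_{0,\divergence}(\Omega;\R^3)$ of \eqref{eq:model} with datum $f_k$, and the pressure $\pi_k\in L^{p'}_0(\Omega)$ is reconstructed via de Rham's theorem together with the Ne\v{c}as/Bogovski\u{i} negative--norm estimate, which is available because $\Omega$ is bounded Lipschitz.

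\textbf{The core: uniform weighted estimate.} Write $\sigma_k:=M(|f_k|+1)$. The goal is
\[
\int_\Omega\big(|\nabla u_k|^p+|\pi_k|^p\big)\,\sigma_k^{\,q-p'}\,\di x\;\le\;c\Big(\int_\Omega|f_k|^q\,\di x+1\Big),
\]
with $c$ independent of $k$; granting this, \eqref{eq:mt1} follows by H\"older with $s=\tfrac{p}{q(p-1)}>1$, since
\[
\int_\Omega|\nabla u_k|^{q(p-1)}\,\di x\le\Big(\int_\Omega|\nabla u_k|^p\sigma_k^{\,q-p'}\di x\Big)^{1/s}\Big(\int_\Omega\sigma_k^{\,q}\,\di x\Big)^{1/s'},
\]
where the algebraic identity $(p'-q)(p-1)=p-q(p-1)$ makes the power of $\sigma_k$ in the second factor exactly $q$, so that factor is controlled by $\int_\Omega M(|f_k|+1)^q\,\di x\le c\int_\Omega(|f_k|+1)^q\,\di x$ by the Hardy--Littlewood maximal theorem (admissible since $q>1$). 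The role of the weight $\sigma_k^{\,q-p'}$ becomes transparent from \emph{formally} testing \eqref{eq:model} with $u_k\,\sigma_k^{\,q-p'}$: coercivity produces $\int|\varepsilon u_k|^p\sigma_k^{\,q-p'}$, and the leading right-hand term $\int f_k\cdot\nabla u_k\,\sigma_k^{\,q-p'}$ is absorbed after Young's inequality because
\[
\int_\Omega|f_k|^{p'}\sigma_k^{\,q-p'}\,\di x\le\int_\Omega M(|f_k|+1)^{p'}M(|f_k|+1)^{q-p'}\,\di x=\int_\Omega M(|f_k|+1)^{q}\,\di x\le c\int_\Omega(|f_k|+1)^q\,\di x.
\]
Since $u_k\,\sigma_k^{\,q-p'}$ is neither divergence free nor Lipschitz and, crucially, $u_k$ is \emph{not} bounded in $W^{1,p}$ uniformly in $k$, this heuristic has to be implemented through the solenoidal, zero-boundary-value preserving Lipschitz truncation: applied to $u_k$ at dyadic levels $\lambda=2^j$ it yields $u_{k,\lambda}\in W^{1,\infty}_{0,\divergence}(\Omega)$ with $\|\nabla u_{k,\lambda}\|_\infty\lesssim\lambda$, equal to $u_k$ off a bad set $\mathcal O_\lambda\sim\{M(\nabla u_k)>\lambda\}\cup\{M(|f_k|+1)>\lambda^{\gamma}\}$ and with $\lambda^p|\mathcal O_\lambda|\lesssim\int_{\mathcal O_\lambda}|\nabla u_k|^p$. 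Testing \eqref{eq:model} with $u_{k,\lambda}$, splitting over $\mathcal O_\lambda$ and its complement, and using \eqref{eq:coerc}--\eqref{eq:bound} gives a level-set inequality which, after a weighted layer-cake summation over $j$ tuned by $\sigma_k^{\,q-p'}$, produces the displayed bound for $\nabla u_k$; the pressure term is obtained by testing with the Bogovski\u{i} potential of a truncated version of $|\pi_k|^{p-2}\pi_k\,\sigma_k^{\,q-p'}$ and invoking boundedness of the Bogovski\u{i}/Calder\'on--Zygmund operators on Muckenhoupt-weighted $L^p$, using that $\sigma_k^{\,q-p'}$, a negative power of a maximal function, is an $A_1\subset A_p$ weight with constant depending only on $p,q$.

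\textbf{Passage to the limit.} By \eqref{eq:mt1}, $(u_k)$ is bounded in $W^{1,q(p-1)}_{0,\divergence}$, $(\pi_k)$ in $L^q_0$, and by \eqref{eq:bound} $(A(\cdot,\varepsilon u_k))$ is bounded in $L^q$; extract $u_k\toweak u$ (hence $u_k\to u$ strongly in $L^r$ for $r$ below the Sobolev exponent of $W^{1,q(p-1)}$), $\pi_k\toweak\pi$, $A(\cdot,\varepsilon u_k)\toweak\overline A$. Because $q(p-1)<p$, the solution $u$ is not an admissible test function and the classical Minty trick is unavailable, so $\overline A=A(\cdot,\varepsilon u)$ is identified by the now-standard monotone-operator argument built on the solenoidal Lipschitz truncation applied to $u_k-u$, forcing $\varepsilon u_k\to\varepsilon u$ in measure and then, with \eqref{eq:bound} and Vitali, $A(\cdot,\varepsilon u_k)\to A(\cdot,\varepsilon u)$ in $L^1$. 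Passing to the limit in the weak form of \eqref{eq:model} shows that $(u,\pi)$ solves \eqref{eq:model}, and \eqref{eq:mt1}--\eqref{eq:mt2} transfer to the limit by weak lower semicontinuity, using Fatou for the weighted quantities after extracting a.e.\ convergence of a further subsequence of $M(|f_k|+1)$ (which converges since $f_k\to f$ in $L^q$).

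\textbf{Main obstacle.} The bottleneck is the uniform weighted estimate of the core step: both the construction of a Lipschitz truncation that is simultaneously solenoidal and boundary-value preserving (the new tool of the paper), and the bookkeeping that recombines the bad-set contributions into the weighted left-hand side while the data terms close against the precisely tuned weight $M(|f|+1)^{q-p'}$, together with the weighted ($A_p$) pressure estimate. The nonlinear limit identification is a secondary but genuine difficulty, again solvable only through the Lipschitz truncation since $u\notin W^{1,p}$.
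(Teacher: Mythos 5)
Your overall architecture (regularise $f$, prove a uniform weighted estimate by testing with the new solenoidal zero-trace truncation plus a layer-cake argument in the level parameter, obtain the $L^{q(p-1)}$ bound from the weighted one by H\"older/Young, construct the pressure with the Bogovski\u{\i} operator, pass to the limit) is the same as the paper's, and your algebra for converting the weighted bound into \eqref{eq:mt1} is correct. However, there is a genuine gap in the limit passage: the identification $\overline A=A(\cdot,\varepsilon u)$. You invoke the ``now-standard'' Lipschitz-truncation compactness argument applied to $u_k-u$ to force $\varepsilon u_k\to\varepsilon u$ in measure, but in the sub-duality regime this argument does not close. Indeed, $u_k-u$ is bounded only in $W^{1,q(p-1)}$ and $A(\cdot,\varepsilon u_k)-\overline A$ only in $L^{q}$, so the bad-set contribution in the truncated test is of order $\lambda\,\abs{\mathcal O_\lambda}^{1/q'}\lesssim \lambda^{1-q(p-1)/q'}$, which tends to zero only if $(p-1)(q-1)\ge 1$, i.e.\ $q\ge p'$ -- exactly the opposite of the range $q<p'$ you are in. This failure of the unweighted duality pairing is precisely the point stressed in the paper: the weighted estimate \eqref{eq:mt2} is what restores a pairing ($\nabla u_k$ bounded in $L^p_\omega$, $A(\cdot,\varepsilon u_k)$ in $L^{p'}_\omega$ with $\omega=M(|f|+1)^{q-p'}$), and the paper identifies the limit through the solenoidal, weighted, biting div--curl lemma (Theorem~\ref{thm:swbdcl}) combined with a Minty-type monotonicity argument in the weighted pairing, yielding only weak $L^1$ convergence of $A(\cdot,\varepsilon u_k)\cdot\nabla u_k\,\omega$ on exhausting sets -- no convergence in measure of $\varepsilon u_k$ is ever produced or needed. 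As written, your identification step would fail; to repair it you must bring the weight into that step (either by reproving a weighted truncation/Minty argument or by quoting the biting div--curl lemma as the paper does).

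Two smaller points. First, $M(|f|+1)^{q-p'}$ is \emph{not} an $A_1$ weight (positive powers $(Mh)^\alpha$, $\alpha\in(0,1)$, are $A_1$; negative powers are only $A_p$, via Lemma~\ref{lem:tur2}, which requires $p'-q<p-1$, i.e.\ $\varepsilon_0$ small) -- the $A_p$ membership suffices for the weighted Bogovski\u{\i} bound, and the paper in fact gets the pressure bounds more simply by duality of $\mathcal F$ against $L^{q'}_0$ and $L^{p'}_\omega$ rather than by testing with a truncated $|\pi_k|^{p-2}\pi_k\,\omega$. Second, your core estimate uses a Lipschitz truncation whose bad set mixes $\{M(\nabla u_k)>\lambda\}$ with level sets of $M(|f_k|+1)$ and a dyadic summation; the paper's relative truncation is taken with respect to the level sets of $M(|f|+1)$ \emph{only}, followed by integration in $\lambda^{-1-\varepsilon}\,\mathrm d\lambda$, which makes the absorption argument cleaner -- your variant is plausible in spirit but is only sketched and would need the same careful bookkeeping before it can be accepted.
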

As already mentioned the main novelty of this paper are the a-priori estimates, since the existence theory follows densly the approach of~\cite{BuDS16} and~\cite{BBS16}. Both estimates are new for $p\neq 2$. The respective weighted estimates for the p-Laplacian was shown in~\cite{BS16}, which also implied the existence for respective very weak solutions. The (not weighted) $L^q$ estimates for the p-Laplacian system are already known for some time, see~\cite{Iwa92,Le}. 

\subsection{Existence and a-priori estimates for the p-Navier-Stokes system $p>2$}
The second aim of this paper is to obtain a-priori estimates for the Navier-Stokes system~\eqref{eq:navier-stokes}. We wish to point out that the a-priori estimates for the p-Navier-Stokes system  do not follow in a straight forward manner from the respective estimates of the Stokes system. Unfortunately we were unable to extend the existence theory for the Navier-Stokes regime~\eqref{eq:navier-stokes} in the case when $p<2$, since the scaling of the convective term is then overwhelming the scaling of the diffusion term. %However, in case of the respective Stokes system the a-priori bounds are valid in the full range $p\in (1,\infty)$; moreover, they satisfy the natural scaling.
 In case $p>2$ in case of~\eqref{eq:navier-stokes} have to replace \eqref{eq:coerc} by the following stronger assumption. We assume that for all $z_1,z_2\in \setR^{3\times 3}$
\begin{equation}
\label{eq:mons}
\left(A\left(x,z_{1}\right)-A\left(x,z_{2}\right)\right)\cdot\left(z_{1}-z_{2}\right)\ge C_1\abs{z_1-z_2}^{p}-C_3.
\end{equation}
Please observe that \eqref{eq:mons} is satisfied by the model case \eqref{eq:test} as well as by many other stress laws.
 The result for $p>2$ is the following:
\begin{theorem}\label{thm:p-NAVIER-STOKES}
Assume that $\Omega$ is a bounded Lipschitz domain. Let  $p\in (2,\infty)$ and $A$ satisfying \eqref{eq:bound}, \eqref{eq:mon} and \eqref{eq:mons}, then there exists an $\varepsilon_0 >0$  depending on $\Omega$ and $p$ such that  if $ q \in [p^{\prime}-\varepsilon_0, p^{\prime}]$ and $f\in L^{q}\left(\Omega\right)$ there exists a weak solution $\left(u,\pi\right)\in W_{0,\mathrm{div}}^{1,q(p-1)}\left(\Omega\right)\times L_{0}^{q}\left(\Omega\right)$ to~\eqref{eq:navier-stokes}. 

Furthermore,
we find
\[
\int_{\Omega}\!\!\left|\pi\right|^{q}\mathrm{d}x +
\int_{\Omega}\!\!\left|\nabla u\right|^{q\left(p-1\right)}+(\left|\nabla u\right|^{p}+\left|\pi\right|^{p'})M\left(\left|f\right|+1\right)^{q-p^{\prime}}\!\!\mathrm{d}x\le c\left(C_1,C_2,C_3,p,q,\Omega\right)\left(\int_{\Omega}\!\!\left|f\right|^{q}\mathrm{d}x+1\right)^{\frac{1}{p-2}+\alpha\big(\frac{p'-q}{q}\big)}
\]
where 
\begin{align}
\label{eq:alpha}
\alpha(s)=s \frac{2}{p-2}\text{ if } p\in (2,3)\text{ and }\alpha(s)=\max\Big\{s\frac{p}{p-2}, \frac{p-3}{p-2}\Big\}\text{ if }p>3.
\end{align}
%and 
%\[
%\int_{\Omega}\mathrm{d}x\le c\left(C_1,C_2,C_3,p,q,\Omega,\norm{f}_{q}\right).
%\] 
\end{theorem}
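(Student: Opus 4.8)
The plan is to run the same approximation and compactness scheme used for the $p$-Stokes system in Theorem~\ref{thm:main2}, the only genuinely new ingredient being the a-priori estimate that survives the presence of the convective term $\divergence(u\otimes u)$. First I would regularize $f$ by a sequence $f_k\in L^2\cap L^q$ with $f_k\to f$ in $L^q$ and solve the approximate problems~\eqref{eq:navier-stokes} with $f_k$ in place of $f$; for $f_k\in L^2$ existence of $(u_k,\pi_k)$ with $u_k\in W^{1,p}_{0,\divergence}$ follows from standard monotone-operator theory together with the usual Minty/Browder argument (the convective term is compact once one has $W^{1,p}$ control, since $p>2$ gives $W^{1,p}\embed L^{\frac{3p}{3-p}}$ when $p<3$, or $W^{1,p}\embed L^\infty$ when $p>3$). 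The core of the theorem is then to derive, uniformly in $k$, the displayed estimate with the exponent $\tfrac{1}{p-2}+\alpha\!\big(\tfrac{p'-q}{q}\big)$ on the right-hand side; once that is in hand, the weighted bound $\int_\Omega|\nabla u_k|^p M(|f_k|+1)^{q-p'}\,\di x\le\cdots$ together with the compactness result Theorem~\ref{thm:swbdcl} (cf.~\cite[Theorem~1.9]{BBS16}) lets one pass to the limit, identify $A(\cdot,\varepsilon u)$ via the solenoidal Lipschitz truncation preserving zero boundary values, and recover the pressure.

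The a-priori estimate is where the comparison method advertised in the abstract enters. The obstacle is that, unlike in the $p$-Stokes case, one cannot simply test with a truncation of $u$ and absorb, because the convective term $\int_\Omega (u\otimes u):\nabla\varphi\,\di x$ is not controlled by the $W^{1,q(p-1)}$ norm one is trying to bound — and worse, the right-hand side $-\divergence f$ with $f\in L^q$, $q<p'$, is too singular to be tested against $u$ directly. The idea is to split $f$ (via a Calderón–Zygmund / maximal-function stopping-time decomposition at a level to be optimized) into a ''good'' part $f_{\mathrm{good}}$ that lies in the natural class $L^{p'}$ and a ''bad'' part $f_{\mathrm{bad}}$ supported on a small set where $M(|f|+1)$ is large. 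One then introduces a comparison field $v$ solving the $p$-Navier–Stokes (or $p$-Stokes) problem with right-hand side $-\divergence f_{\mathrm{good}}$ and zero boundary data. For $v$ one may legitimately test with $v$ itself: the convective term $\int_\Omega(v\otimes v):\nabla v\,\di x$ vanishes by skew-symmetry together with $\divergence v=0$ and $v|_{\partial\Omega}=0$, so energy estimates in $W^{1,p}$ go through and, combined with the $L^q$/weighted estimates of Theorem~\ref{thm:main2} for the Stokes comparison, give control of $v$. The difference $w=u-v$ then solves a perturbed equation whose right-hand side involves only $\divergence f_{\mathrm{bad}}$ (small in measure) plus commutator terms from the convective part; here one tests with a solenoidal Lipschitz truncation of $w$ that vanishes on $\partial\Omega$, uses monotonicity~\eqref{eq:mons} to get the $C_1|\varepsilon w|^p$ on the left, and estimates the convective contribution $\int(u\otimes u):\nabla(\text{truncation of }w)$ using the Lipschitz bound on the truncation together with the weighted $L^p$ control on $u$ and $v$ already obtained. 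Iterating this comparison over dyadic levels of $M(|f|+1)$ and summing the geometric series produces the weighted estimate, and then reverse Hölder / interpolation upgrades it to the $L^{q(p-1)}$ bound on $\nabla u$ and the $L^q\cap L^p$ bound on $\pi$.

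The exponent $\tfrac{1}{p-2}+\alpha\!\big(\tfrac{p'-q}{q}\big)$ and the case distinction $p\in(2,3)$ versus $p>3$ in~\eqref{eq:alpha} come out of bookkeeping in exactly this iteration: the $\tfrac{1}{p-2}$ is the price of absorbing the convective term into the monotonicity coercivity $|\varepsilon w|^p$ (one gains a factor with a $(p-2)$-th root when closing the estimate $X\lesssim 1+X^{\theta}$ with $\theta=\tfrac{p-1}{p}<1$ — careful accounting of the super-linear power gives $\tfrac{1}{p-2}$), while the $\alpha$-term measures how much the ''bad'' part $f_{\mathrm{bad}}$ costs, which depends on the Sobolev exponent available for $u$ in the convective term and hence on whether $p<3$ (so $W^{1,p}\embed L^{3p/(3-p)}$, finite exponent, giving $\alpha(s)=\tfrac{2s}{p-2}$) or $p>3$ (so $W^{1,p}\embed L^\infty$, giving the max with the dimension-driven constant $\tfrac{p-3}{p-2}$). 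I expect the main technical difficulty to be the bookkeeping in the iterated comparison — tracking how the smallness of $|\{M(|f|+1)>\lambda\}|$ beats the growth of the convective contribution at each level so that the series converges with a constant that is a fixed power of $\int_\Omega|f|^q\,\di x+1$ — and secondarily the verification that the solenoidal Lipschitz truncation of $w$ is admissible as a test function (zero boundary values, divergence-free, controlled Lipschitz constant on its support), for which one invokes the newly developed truncation cited in the abstract. The requirement $q\in[p'-\varepsilon_0,p']$ with $\varepsilon_0$ small is precisely what is needed for the weighted exponent $q-p'$ in $M(|f|+1)^{q-p'}$ to be close enough to $0$ that the Muckenhoupt $A_p$-type weight behaves and the comparison series closes; $p>2$ is needed so that the diffusion scaling $|\varepsilon u|^p$ dominates the convective scaling $|u|^2|\nabla\varphi|$ under the relevant embeddings, which is why, as the authors note, the method breaks for $p<2$.
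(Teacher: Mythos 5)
Your outer scheme (approximate $f$ by bounded $f_k$, prove a uniform weighted bound, pass to the limit with the biting div--curl lemma of Theorem~\ref{thm:swbdcl}, recover the pressure via Bogovski\u{i}) matches the paper, but the heart of the theorem is the a-priori estimate of Proposition~\ref{pro:p}, and there your proposal has a genuine gap. You propose to split $f$ into a good part in $L^{p'}$ and a bad part supported where $M(|f|+1)$ is large, compare with a solution driven by the good part, test the difference equation (which still carries the convective term and the singular bad part) with a solenoidal Lipschitz truncation of $w=u-v$, and then ``iterate over dyadic levels and sum a geometric series.'' None of this is substantiated, and it is exactly the step where the known techniques break: on the bad set the convective term tested against a Lipschitz truncation has neither a sign nor any skew-symmetry to exploit, and your bound for it invokes ``the weighted $L^p$ control on $u$ already obtained,'' which is precisely the quantity being estimated -- the circularity must be resolved by a single quantitative absorption, not by an unproven geometric series. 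The paper's route for $p>2$ is structurally different and avoids this entirely: the truncation machinery is confined to the \emph{Stokes} comparison problem with the \emph{full} singular right-hand side $f$ (Corollary~\ref{cor:1} gives $\int_\Omega|\nabla v|^p\omega\,\mathrm{d}x\lesssim\int_\Omega|f|^q+1\,\mathrm{d}x$ with $\omega=M(|f|\chi_\Omega+1)^{q-p'}$), so no splitting of $f$ and no iteration are needed; the difference equation then has right-hand side $-\divergence(u\otimes u)$ only, and since at the approximate level $u-v\in W^{1,p}_{0,\divergence}$ it may be used directly as a test function -- no truncation of $w$ is required. Skew-symmetry is used exactly, via $\int_\Omega u\cdot\nabla\tfrac{|u-v|^2}{2}\,\mathrm{d}x=0$, strong monotonicity \eqref{eq:mons} gives $\int|\varepsilon(u-v)|^p$ on the left, and the remaining term $\int_\Omega|u\otimes v|^{p'}\,\mathrm{d}x$ is bounded through the weighted Sobolev embedding of Theorem~\ref{thm:weight-embedding} as in \eqref{eq:forNS}, with $b=p$ for $p<3$ and $b=2$ for $p\geq3$, the closeness of $q$ to $p'$ being what makes $\int_\Omega\omega^{-2p'/(b\gamma)}\,\mathrm{d}x$ controllable by a power of $\int_\Omega|f|^q+1\,\mathrm{d}x$.

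This last chain is also where the stated exponent actually comes from, and your bookkeeping does not reproduce it: the factor $\tfrac1{p-2}$ arises from Young's inequality with exponents $\tfrac{p}{p'}$ and $\tfrac{p}{p-p'}$ when the term $\bigl(\int_\Omega|\nabla u|^p\omega\,\mathrm{d}x\bigr)^{p'/p}$ is absorbed (note $\tfrac{p'}{p-p'}=\tfrac1{p-2}$), not from closing $X\lesssim1+X^{(p-1)/p}$; and the case distinction in \eqref{eq:alpha} reflects the choice $b=p$ versus $b=2$ in the weighted embedding (for $p\geq3$ one uses $b^*=6>2p'$), not $W^{1,p}\hookrightarrow L^\infty$. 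Your good/bad splitting of $f$ with a smallness parameter is, incidentally, the mechanism the paper uses for the \emph{borderline} case $p=2$ under \eqref{eq:uhlenbeck} (Proposition~\ref{pro:2}), where a single split plus smallness of the bad mass suffices -- but even there no dyadic iteration appears. As written, your proposal identifies the right ingredients (comparison, skew-symmetry, \eqref{eq:mons}, weighted estimates, absorption) but does not set up the one inequality that makes the argument close and produce the exponent $\tfrac1{p-2}+\alpha\bigl(\tfrac{p'-q}{q}\bigr)$, so the central estimate remains unproved.
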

%\begin{remark}
%By combining the estimates in 
%\end{remark}
\begin{remark}
Please observe that in case $p>3$ and $\epsilon_0$ small enough the estimate for the Navier-Stokes equation~\eqref{eq:navier-stokes} is the same as for the Stokes equation~\eqref{eq:model}. This is natural due to the scaling of the convective term which can be overwhelmed exactly when $p>3$.
\end{remark}
\subsection{Existence and a-priori estimates for the Navier-Stokes system $p=2$}
In case $p=2$ much more can be shown provided that we know for large shear speeds that the stress-tensor becomes diagonal. The additional assumption here has been introduced in~\cite{BBS16}, where the respective Stokes theory has been developed. We can extend the theory of~\cite{BBS16} to the non-linear Navier Stokes case (with convective term). We assume what we call the {\em linear at infinity condition} which says that there is a viscosity at infinity $\nu$, such that
\begin{align}
\label{eq:uhlenbeck}
\lim_{\abs{z}\to \infty} \frac{\abs{A(x,z)-\nu z}}{\abs{z}}=0\text{ and } \lim_{\abs{z}\to \infty} \abs{\partial_z A(x,z)[y]-\nu y}=0
\end{align}
uniformly in $x\in \Omega$ and $y\in \setR^{3\times 3}$.
For such stresses we have the following theorem:
  \begin{theorem}\label{thm:new-main}
Let  $\Omega$ be  a bounded, open and $C^1$  domain and $A$ satisfying \eqref{eq:coerc},\eqref{eq:bound}, \eqref{eq:mon} for $p=2$ and \eqref{eq:uhlenbeck}, then for $ q \in [\frac{12}{7}, 2]$ and $f\in L^{q}\left(\Omega\right)$ there exists a weak solution $\left(u,\pi\right)\in W_{0,\mathrm{div}}^{1,q}\left(\Omega\right)\times L_{0}^{q}\left(\Omega\right)$ to~\eqref{eq:navier-stokes}.
Furthermore  we find
\[
\int_{\Omega}\left|\pi\right|^{q}\mathrm{d}x +
\int_{\Omega}\left|\nabla u\right|^{q}+(\left|\nabla u\right|^{2}+\left|\pi\right|^{2})M\left(\left|f\right|+1\right)^{q-2}\mathrm{d}x\le C
\]
for some positive constant $C$ that depends on $\int_{\Omega}\left|f\right|^{q}\mathrm{d}x$, $C_1,C_2,C_3$ and the linear at infinity condition~\eqref{eq:uhlenbeck}.
\end{theorem}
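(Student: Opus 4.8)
The plan is to combine the comparison/decomposition machinery used for Theorems~\ref{thm:main2} and~\ref{thm:p-NAVIER-STOKES} with the special structure coming from the linear at infinity condition~\eqref{eq:uhlenbeck}, and then to close the estimate using the extra room that $p=2$ provides. First I would set up the approximation scheme: replace $f\in L^q$ by a sequence $f_k\in L^2$ with $f_k\to f$ in $L^q$ and $\norm{f_k}_{L^q}\le c\norm{f}_{L^q}$, and obtain weak solutions $(u_k,\pi_k)\in W^{1,2}_{0,\divergence}\times L^2_0$ to~\eqref{eq:navier-stokes} with $f$ replaced by $f_k$ by standard monotone operator theory (using the skew-symmetry of the convective term to kill it when testing with $u_k$). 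The heart of the matter is then a uniform bound on $(u_k,\pi_k)$ in the $q$-and-weighted norms on the left-hand side of the claimed inequality, independent of $k$; once this is in place, the compactness result Theorem~\ref{thm:swbdcl} (quoted from~\cite[Thm.~1.9]{BBS16}) lets us pass to the limit, identify the limit of $A(\cdot,\varepsilon u_k)$, handle the convective term by strong $L^2$ convergence (available since the a priori bound sits in a space compactly embedded into $L^2$ for $q\ge 12/7$), and recover a very weak solution in the sense of Definition~\ref{def:vws}.

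For the a priori estimate I would first use~\eqref{eq:uhlenbeck} to split $A(x,z)=\nu z + \tilde A(x,z)$ where $\tilde A$ is sublinear at infinity; then the equation for $u_k$ reads as a perturbed $p=2$ Navier--Stokes system $-\nu\Delta u_k+\divergence(u_k\otimes u_k)+\nabla\pi_k=-\divergence(f_k-\tilde A(\cdot,\varepsilon u_k))$. The idea, following the ``cutting out the singularities'' comparison method advertised in the abstract and developed for the earlier theorems, is to decompose $f_k = f_k^{\mathrm{good}} + f_k^{\mathrm{bad}}$, where $f_k^{\mathrm{bad}}$ is supported where the maximal function $M(\abs{f_k}+1)$ is large (a set of small measure) and $f_k^{\mathrm{good}}$ is bounded; one compares $u_k$ with the solution to the problem driven only by $f_k^{\mathrm{good}}$, for which the convective term is genuinely controlled (here the skew-symmetry is exploited), and estimates the correction coming from $f_k^{\mathrm{bad}}$ via the solenoidal Lipschitz truncation preserving zero boundary values, together with the weighted (Muckenhoupt) estimates of Theorem~\ref{thm:main2}. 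Since $p=2$, the weight exponent $q-p'=q-2$ is small and negative, and the convective term $\abs{u_k}^2$ has exactly the borderline scaling; the Sobolev embedding $W^{1,q}\embed L^{3q/(3-q)}$ gives $\abs{u_k}^2\in L^{3q/(2(3-q))}$, and the restriction $q\ge 12/7$ is precisely what is needed so that this exponent is at least $q'$ (equivalently, so that the convective term can be absorbed into the right-hand side of the fixed-point / comparison inequality with a small constant coming from the small measure of the bad set). For the pressure one solves $\divergence v = \pi_k - \mean{\pi_k}$ by the {\Bogovskij} operator and tests, obtaining the $L^q$ and weighted $L^p=L^2$ bounds on $\pi_k$ from those on $\nabla u_k$ and on the right-hand side.

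The main obstacle I expect is the closing of the a priori estimate in the presence of the convective term at the borderline exponent: unlike the cases $p>2$ treated in Theorem~\ref{thm:p-NAVIER-STOKES}, where for $p>3$ the diffusion simply dominates and for $2<p<3$ one still has a definite power of $\norm{f}_{L^q}$ to play with, at $p=2$ and $q$ near $12/7$ the convective term and the gain from the truncation are of exactly the same order, so the absorption argument only works because the bad set has small measure and one obtains a genuinely small constant in front of the convective contribution. Making this quantitative — i.e.\ choosing the truncation level and the good/bad splitting so that the constant is $<1$ uniformly in $k$, while keeping track that the linear at infinity condition only gives \emph{sublinearity} of $\tilde A$ (so $\tilde A(\cdot,\varepsilon u_k)$ must be handled as a lower-order but not bounded term, via an interpolation/Young argument feeding back into the same norms) — is the delicate point. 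Everything else (the passage to the limit, the identification of the nonlinearity by the weighted compactness of~\cite{BBS16}, the pressure reconstruction) is then routine given the machinery already set up for Theorems~\ref{thm:main2} and~\ref{thm:p-NAVIER-STOKES}.
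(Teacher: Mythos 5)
There is a genuine gap at the decisive step, namely where the smallness needed to absorb the convective term is supposed to come from. Your comparison is oriented the wrong way around: you let the comparison solution be driven by the bounded part $f_k^{\mathrm{good}}$ and want to estimate the correction, driven by $f_k^{\mathrm{bad}}$, by the solenoidal Lipschitz truncation together with the weighted estimates of Theorem~\ref{thm:main2}. But the correction equation then carries the \emph{difference} of the convective terms, $\divergence(u\otimes u-v\otimes v)=\divergence(u\otimes(u-v)+(u-v)\otimes v)$, and once you are forced to test with a truncation of $u-v$ (because the data $f_k^{\mathrm{bad}}$ is only in $L^q$), the skew symmetry is lost and the term $\int u\otimes(u-v)\cdot\nabla (u-v)_\lambda\,\mathrm{d}x$ involves the unknown $u$ itself with no smallness attached; the small measure of the bad set does not produce a constant smaller than one here. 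Moreover, the weighted estimates you invoke (Theorem~\ref{thm:main2}, Corollary~\ref{cor:1}) carry the additive constant coming from $C_3$ in \eqref{eq:coerc}, so even for arbitrarily small singular data they only give a bound of order $c(\int_\Omega|f|^q\mathrm{d}x+1)$, never a small one. The paper's proof (Proposition~\ref{pro:2}) resolves exactly these two points by reversing the roles: one splits $f=g_k+b_k$ with $b_k=f\chi_{\{|f|>k\}}$, uses \eqref{eq:uhlenbeck} to build a modified stress $\tilde A$ that is strongly monotone with \emph{no} additive constant, solves the auxiliary \emph{Stokes} problem (no convection) with data $b_k$, and obtains $\int_\Omega|\nabla v|^2\omega\,\mathrm{d}x\le c\norm{b_k}_{L^q_\omega}^q=c\beta$ from the linear weighted Stokes theory of \cite{BBS16} (Lemma~3.2 there), with $\beta$ arbitrarily small as $k\to\infty$. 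Then the difference equation has data $g_k\in L^\infty$ and can be tested with $u-v\in W^{1,2}_{0,\divergence}$ \emph{itself}, so no truncation is needed, skew symmetry reduces the convective contribution to $\skp{u\otimes v}{\nabla(u-v)}$, and this is absorbed precisely because $v$ is small in $L^2_\omega$. Your plan, as stated, has no mechanism that replaces this.

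A second, smaller but concrete error is your identification of the threshold $q\ge\frac{12}{7}$: you claim it makes $|u|^2\in L^{3q/(2(3-q))}$ land in $L^{q'}$, but at $q=\frac{12}{7}$ one has $\frac{3q}{2(3-q)}=2<q'=\frac{12}{5}$ (that requirement would need $q\ge\frac95$). In the paper the restriction arises from the weighted H\"older step \eqref{eq:forNS} with $b=2$, $b^*=6$, $\gamma=\frac13$: one needs $\int_\Omega\omega^{-6}\,\mathrm{d}x=\int_\Omega M(|f|\chi_\Omega+1)^{6(2-q)}\,\mathrm{d}x$ to be controlled by $\int_\Omega|f|^q\mathrm{d}x+1$, i.e.\ $6(2-q)\le q$, which is exactly $q\ge\frac{12}{7}$. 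The remaining ingredients of your plan (approximation $f_k\to f$, uniform bounds, limit passage via Theorem~\ref{thm:swbdcl}, compact embedding into $L^2$ for the convective term, {\Bogovskij} reconstruction of the pressure) do match the paper's Section on existence, but without the correct comparison structure and the correct use of \eqref{eq:uhlenbeck} the a priori estimate, which is the heart of Theorem~\ref{thm:new-main}, does not close.
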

In order to achieve the result we introduce a new decoupling method that divides the estimate by splitting the right hand side into a large part which is in the dual space (and hence the skew symmetry of the convective term may be used) and a small singular part. It is then possible to use the smallness of the mass of the singular part to quantify the difference of the Navier-Stokes solution to the Stokes solution.
\begin{remark}
In case $q\in [2,\infty)$ the existence of solutions to \eqref{eq:navier-stokes} follows by monotone operator theory and fixed point methods. The a-priori estimates (i.e.\ showing that $\nabla u\in L^q(\Omega)$ for $q>2$) then follows by \cite[Theorem 1.4]{BBS16} using $\divergence(u\otimes u)$ as part of the right hand side. 
\end{remark}

\subsection{Solenoidal Lipschitz truncations with zero boundary valules}
The last main result we present is a refinement of the Lipschitz approximation method.
% The main tool in order to get the announced a-priori estimates for \eqref{eq:model} is called \textit{solenoidal relative truncation}. 
Let us say a few words about the development of this tool.
Suppose  we are given a Sobolev function $u \in W^{1,1}(\Omega)$ where $\Omega$ is an open set of $\mathbb{R}^{3}$. A \textbf{Lipschitz truncation} of $u$ is a function $u_{\lambda}$  that is Lipschitz continuous with Lipschitz constant bounded by $\lambda>0$ such that $\left|\left\{ u_{\lambda}\neq u\right\} \right|\to0$ as $\lambda \to \infty $. This is done by modifying the function $u$ on the level set where  the Hardy- Lilttlewood maximal function of $\nabla u$ is greater than $\lambda$. To our knowledge, this was first achieved by Acerbi and Fusco in \cite{AF1} , \cite{AF2} and \cite{AF3}. 

The Lipschitz truncation method was  successfully applied in many areas of analysis such as: 
\begin{itemize}
\item Calculus of variations: weak lower semicontinuity for Lipschitz functions imply weak lower semicontinuity for Sobolev functions ~\cite{AF1}, \cite{AF2}, \cite{AF3}.
\item Fluid dynamics: existence of non-Newtonian fluids \cite{fms97} , \cite{dms08}, \cite{BDS13} and the references therein.
\item Very weak solutions: a-priori estimates for p-Laplacian \cite{Le} , existence and uniqueness issues \cite{BuDS16} , \cite{BDS13} \cite{BS16},  non-linear flows \cite{BBS16}. See also the recent parabolic results~\cite{DieSchStrVer17,BBS19}

\end{itemize}
A self- contained survey on Lipschitz truncations with applications to fluid dynamics  and  some  more references can also be found in the recent book \cite{breit17}.

The main ingredient for the a-priori estimates (in weighted spaces) is the use of a divergence free {\em truncation that is chosen relative to the weight}. The technique is closely related to the so-called {\em Lipschitz truncation method}.

For that we introduce the following new refinement of the solenoidal Lipschitz truncation method that was first introduced in~\cite{BDS13}.
\begin{theorem}\label{truncation} Let $p\in (1,\infty)$, let $\Omega \subset \mathbb{R}^{3}$ be an open,  bounded subset with Lipschitz boundary and let $u \in  W_{0, \text{div}}^{1,p}(\Omega)$.
Then there exists a set $\mathcal{O}\subset \Omega$, with 
\[
\abs{\mathcal{O}}\lesssim \lambda^{-p}\int_{\Omega}\abs{\nabla u}^p\, \mathrm{d}x
\] 
and a function $u_{\lambda}\in W_{0,\text{div}}^{1,\infty}\left(\Omega\right)$, such that $u(x)=u_\lambda(x)$ for all $x\in \mathcal{O}^c$.
Additionally
 $$\norm{\nabla u_{\lambda}}_{L^\infty(\Omega)}\le c\lambda
 $$
  almost everywhere, 
\begin{align*}
\int_{\mathcal{O}}\left|\nabla\left(u-u_{\lambda}\right)\right|^{q}\mathrm{d}x
%&\lesssim
%\int_{\mathcal{O}_{\lambda}\cap\Omega}\left|\nabla^2 w\right|^{q}\mathrm{d}x\\
&\lesssim \lambda^{q-p}\int_{\Omega} \abs{\nabla u}^p\, \mathrm{d} x
\intertext{ and }
\left\Vert u_{\lambda}-u\right\Vert _{L^{q}\left(\mathcal{O}_{\lambda}\cap\Omega\right)}^{q} 
&\lesssim\left\Vert u\right\Vert _{L^{q}(\Omega)}^{q}
\end{align*}
for all $q\in [1,p]$. The constants depend on $p$ and the domain only. Moreover, if $\nabla u\in L^p_\omega(\Omega)$ with $\omega \in A_p$ we find
\begin{align}
\int_{\mathcal{O}}\left|\nabla\left(u-u_{\lambda}\right)\right|^{p}\omega\mathrm{d}x\lesssim c\int_{\Omega}\left|\nabla\left(u-u_{\lambda}\right)\right|^{p}\omega\mathrm{d}x
\end{align}
with $c$ depending on the $A_p$ and the domain only.
\end{theorem}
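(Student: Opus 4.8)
The plan is to combine the classical (non-solenoidal) Lipschitz truncation with a correction that restores the divergence-free constraint, carried out in a way that is \emph{local} on the bad set so that all the estimates survive. First I would apply the standard Lipschitz truncation to $u\in W^{1,p}_{0,\mathrm{div}}(\Omega)$ (extended by zero to a ball containing $\bar\Omega$): set $\mathcal{O}_\lambda:=\{x : M(\nabla u)(x)>\lambda\}$, so that $|\mathcal{O}_\lambda|\lesssim \lambda^{-p}\int_\Omega|\nabla u|^p\,\mathrm{d}x$ by the weak $(1,1)$/strong $(p,p)$ bounds for $M$, and obtain $\tilde u_\lambda\in W^{1,\infty}_0$ with $\tilde u_\lambda=u$ on $\mathcal{O}_\lambda^c$, $\|\nabla\tilde u_\lambda\|_\infty\lesssim\lambda$, and the truncation gradient estimates $\int_{\mathcal{O}_\lambda}|\nabla(u-\tilde u_\lambda)|^q\lesssim\lambda^{q-p}\int|\nabla u|^p$ for $q\in[1,p]$ (this is the well-known Acerbi--Fusco/Diening--Málek--Steinhauer estimate). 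The point where solenoidality must be injected is the Whitney decomposition of $\mathcal{O}_\lambda$: write $\mathcal{O}_\lambda=\bigcup_j Q_j$ with a subordinate partition of unity $\{\psi_j\}$, and on each Whitney cube replace $u$ not by $u-u_{Q_j}$ but by a \emph{Bogovski\u{\i} corrector} that removes the divergence created by multiplying by $\psi_j$. Concretely, set
\begin{equation*}
u_\lambda:=u-\sum_j \psi_j\big(u-b_j\big)+\sum_j \mathrm{Bog}_{Q_j}\!\big(\nabla\psi_j\cdot(u-b_j)\big),
\end{equation*}
where $b_j$ is a rigid-motion or affine correction of $u$ on $2Q_j$ (chosen so that Poincaré/Korn on $2Q_j$ applies) and $\mathrm{Bog}_{Q_j}$ is the Bogovski\u{\i} operator on $2Q_j$ solving $\mathrm{div}\,w=g$ with zero boundary values and the scaling-invariant bound $\|\nabla w\|_{L^r(2Q_j)}\lesssim\|g\|_{L^r(2Q_j)}$, $1<r<\infty$. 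Since $\sum_j\nabla\psi_j=0$ on $\mathcal{O}_\lambda$ and $\mathrm{div}\,u=0$, the added Bogovski\u{\i} terms exactly cancel $\mathrm{div}(\psi_j(u-b_j))$, so $\mathrm{div}\,u_\lambda=0$; and because each cube only meets finitely many neighbours (bounded overlap of the $2Q_j$), the sums are locally finite and $u_\lambda=u$ on $\mathcal{O}_\lambda^c$. The zero boundary value is inherited because every $Q_j$ near $\partial\Omega$ has controlled distance to the boundary and the $b_j$ can be taken to vanish appropriately there — here I would use the Lipschitz character of $\partial\Omega$ to extend $u$ by $0$ and treat boundary Whitney cubes exactly like interior ones, which is the refinement over \cite{BDS13}.

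Next I would prove the $L^\infty$ gradient bound. On a fixed Whitney cube $Q_j$ of side $\ell_j$, $u_\lambda$ equals a finite sum over neighbours $k\sim j$ of $\psi_k b_k$ plus Bogovski\u{\i} terms. The key cube-wise estimate is $\|\nabla u - (\nabla u)_{2Q_j}\|_{L^\infty}$-type control replaced by the averaged bound: by the defining property of the Whitney decomposition, $\ell_j^{-1}\fint_{2Q_j}|u-b_j|+\fint_{2Q_j}|\nabla u|\lesssim\lambda$ (since $2Q_j$ still meets $\mathcal{O}_\lambda^c$ where $M(\nabla u)\le\lambda$, and Poincaré--Korn converts the gradient average into the zeroth-order average). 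Differentiating the three groups of terms — $\nabla\psi_k$ contributes $\ell_k^{-1}$ against $|b_k-b_j|\lesssim \ell_k\lambda$ after telescoping through neighbours, $\psi_k\nabla b_k$ is $O(\lambda)$ since $b_k$ is affine with gradient controlled by $\fint_{2Q_k}|\nabla u|\lesssim\lambda$, and the Bogovski\u{\i} term satisfies $\|\nabla\,\mathrm{Bog}_{Q_k}(\nabla\psi_k\cdot(u-b_k))\|_{L^\infty(2Q_k)}\lesssim \ell_k^{-1}\cdot\ell_k^{-1}\fint_{2Q_k}|u-b_k|\cdot\ell_k\lesssim\lambda$ using the scale-invariant $L^\infty$ bound for Bogovski\u{\i} composed with the interpolation/Calderón--Zygmund estimates for $\mathrm{Bog}$ on cubes — altogether gives $|\nabla u_\lambda|\lesssim\lambda$ pointwise. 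The truncation gradient estimate $\int_{\mathcal{O}_\lambda}|\nabla(u-u_\lambda)|^q\lesssim\lambda^{q-p}\int_\Omega|\nabla u|^p$ then follows by the same cube-wise bounds: on $Q_j$, $|\nabla(u-u_\lambda)|\lesssim |\nabla u|+\lambda\cdot(\text{indicator mass})$, raise to the $q$-th power, sum over $j$ using bounded overlap and $\sum_j|Q_j|=|\mathcal{O}_\lambda|\lesssim\lambda^{-p}\int|\nabla u|^p$, and split $\int_{\mathcal{O}_\lambda}|\nabla u|^q\le\lambda^{q-p}\int_{\mathcal{O}_\lambda}|\nabla u|^p$ (valid since $q\le p$ and we are on the set where, loosely, $|\nabla u|$ is large, or more precisely via the layer-cake/maximal-function decomposition). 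The $L^q$ closeness $\|u_\lambda-u\|_{L^q(\mathcal{O}_\lambda)}^q\lesssim\|u\|_{L^q(\Omega)}^q$ is easier: $u-u_\lambda$ is a locally finite sum of $\psi_j(u-b_j)$ and Bogovski\u{\i} terms, each supported in $2Q_j\subset\mathcal{O}_\lambda'$ (a fixed dilate still of comparable measure), and bounded in $L^q(2Q_j)$ by $\|u\|_{L^q(2Q_j)}$ (Poincaré for $b_j$, and $L^q$-boundedness of Bogovski\u{\i}), so summing with bounded overlap gives $\lesssim\|u\|_{L^q(\bigcup 2Q_j)}^q\le\|u\|_{L^q(\Omega)}^q$.

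For the weighted statement, the mechanism is identical but every scalar average $\fint_{2Q_j}$ must be replaced by an $A_p$-weighted estimate. The crucial inputs are: (i) the weighted boundedness of the Bogovski\u{\i} operator on cubes, $\|\nabla\,\mathrm{Bog}_{Q}(g)\|_{L^p_\omega(Q)}\lesssim\|g\|_{L^p_\omega(Q)}$ with constant depending only on $[\omega]_{A_p}$ — this holds because $\mathrm{Bog}$ is a Calderón--Zygmund-type operator (a finite sum of compositions of Riesz transforms with bounded kernels), and such operators are bounded on $L^p_\omega$ for $\omega\in A_p$; and (ii) the weighted Poincaré--Korn inequality on cubes for the choice of $b_j$. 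Then, cube by cube, $\int_{Q_j}|\nabla(u-u_\lambda)|^p\omega\lesssim \int_{2Q_j}|\nabla u - (\nabla u)^\omega_{2Q_j}|^p\omega + (\text{cross terms})\lesssim \int_{2Q_j}|\nabla(u-u_\lambda)|^p\omega$ after absorbing; summing over $j$ with bounded overlap yields $\int_{\mathcal{O}_\lambda}|\nabla(u-u_\lambda)|^p\omega\lesssim\int_\Omega|\nabla(u-u_\lambda)|^p\omega$ with constant depending on $[\omega]_{A_p}$ and $\Omega$ only. I expect the \textbf{main obstacle} to be the boundary behaviour: ensuring that the Whitney/Bogovski\u{\i} construction near $\partial\Omega$ both produces a function in $W^{1,\infty}_{0,\mathrm{div}}$ (genuinely zero trace, genuinely divergence free up to the boundary) and keeps all constants depending only on the Lipschitz character of $\Omega$. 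The standard trick — extend $u$ by zero and flatten the boundary locally — interacts delicately with the solenoidal correction, because the extension of a divergence-free field by zero is only divergence-free in a distributional sense across $\partial\Omega$ with a boundary-concentrated defect; one must either work with the Bogovski\u{\i} operator on the full neighbourhood of a boundary cube (which requires a uniform Bogovski\u{\i} bound on the family of "bent" boundary cubes, i.e. a uniformly star-shaped-with-respect-to-a-ball condition, available from the Lipschitz assumption) or use a separate near-boundary correction. Handling this uniformly is precisely the refinement the theorem claims over the interior construction of \cite{BDS13}, and it is where most of the technical work will go; the interior estimates and the weighted bounds, by contrast, follow the established template closely.
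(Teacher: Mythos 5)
Your route is genuinely different from the paper's, and the difference is not cosmetic: the paper never corrects a divergence defect at all. It inverts the curl (Theorem~\ref{thm:weight-curl}: $u=\mathrm{curl}\,w$ with $w\in W^{2,p}_0(\Omega)$, with unweighted and $A_p$-weighted bounds $\|\nabla^2w\|\lesssim\|\nabla u\|$), takes the bad set $\mathcal{O}_\lambda=\{M(\nabla^2w\chi_\Omega)>\lambda\}$, replaces $w$ on the Whitney cubes by local first-order Taylor polynomials (the relative truncation of Theorem~\ref{thm:main3}, with the cube-wise Poincar\'e/weighted-Poincar\'e estimates of Lemma~\ref{lem:RT1}, and with $w_i:=0$ on cubes with $\tfrac32Q_i\not\subset\Omega$, which is how the zero trace is preserved), and sets $u_\lambda=\mathrm{curl}\,w_\lambda$, so solenoidality is automatic and the pointwise bound $|\nabla u_\lambda|\le 2|\nabla^2w_\lambda|\le c\lambda$ follows purely from the Whitney property $\fint_{\frac32 Q_j}|\nabla^2w|\lesssim\lambda$ (because a dilate such as $16Q_j$ meets $\mathcal{O}_\lambda^c$ -- note in your sketch $2Q_j$ need not meet the complement, only a fixed larger dilate does). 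No singular integral operator is ever applied on the bad set.

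The genuine gap in your proposal is exactly the step where you claim $\|\nabla\,\mathrm{Bog}_{Q_k}(\nabla\psi_k\cdot(u-b_k))\|_{L^\infty(2Q_k)}\lesssim \ell_k^{-1}\fint_{2Q_k}|u-b_k|\lesssim\lambda$. The operator $\nabla\circ\mathrm{Bog}$ is a genuinely singular (Calder\'on--Zygmund type) operator: it is bounded on $L^r$ for $1<r<\infty$ and maps $L^\infty$ into $\mathrm{BMO}$, but it is \emph{not} bounded on $L^\infty$, and a fortiori its output cannot be bounded pointwise by an $L^1$-average of the datum; no interpolation recovers this endpoint. Since on a Whitney cube you only control averages of $u-b_k$ (the datum $\nabla\psi_k\cdot(u-b_k)$ is an $L^p$-type quantity of size $\lambda$ per cube, not uniformly H\"older), the corrected field cannot be shown to satisfy $\|\nabla u_\lambda\|_{L^\infty}\le c\lambda$, which is the heart of the theorem; the $L^q$, measure and weighted estimates you outline (zero mean of the Bogovski\u{\i} data, weighted boundedness of $\mathrm{Bog}$ as in \cite{DRS10}, bounded overlap) are fine but do not rescue this. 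This failure of the local Bogovski\u{\i} correction is precisely the known obstruction that motivated the potential-truncation construction of \cite{BDS13}, which the present paper follows and refines (adding the zero-boundary-value curl inverse on Lipschitz domains and the weighted estimates); to repair your argument you would essentially have to abandon the Bogovski\u{\i} correction on the bad set and pass to a second-order truncation of a vector potential, i.e.\ the paper's proof.
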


\begin{remark} 
The construction of the Lipschitz truncation can be found in the Appendix (see Subsection~\ref{proof:LT}). It might be noteworthy that the proof uses the inverse-curl operator (in domains with zero traces) for which we provide  weighted estimates that might be of independent interest (see Theorem~\ref{thm:weight-curl}).

 We point out that, in contrast to earlier versions of the Lipschitz truncation,  our truncation  inherits {\bf both} the {\bf solenoidality} and the {\bf zero trace} property of the Sobolev function. In addition to the usual $L^{q}$ estimates we provide weighted $L^p$ estimates as well. We mention that our result uses the techniques recently introduced in \cite{BDS13}, \cite{BBS16}, \cite{BuDS16}, \cite{BS16}. To be more precise, we improve the inverse curl operator introduced in \cite{BS90} by proving  additional weighted estimates. 

We think this improved weighted inverse curl might be of interest as well; the complete formulation is contained in Theorem~ \ref{thm:weight-curl}.
\end{remark}

\section{Notations and preliminary results}
\subsection{Notation} In the present work  we use the following notations:  

\begin{enumerate}

\item If $E \subset \mathbb{R}^{3} $ then $\chi_{E} $ denotes the characteristic function of $E$ that assigns $1$ to each element of $E$ and otherwise is $0$; 
\item If $E$ is Lebesgue measurable we denote by $\abs E$ its Lebesgue measure ;
\item for a measurable function $f : \Omega \subseteq \mathbb{R}^{3} \to \mathbb{R}_{+}$  and a measurable set $\Omega$:
 $\int_{\Omega} f(x)\mathrm{d}x$ is the integral with respect to the Lebesgue measure and $$\frac{1}{\left|\Omega\right|}\int_{\Omega}f\left(x\right)\mathrm{d}x=:\fint_{\Omega}f\left(x\right)\mathrm{d}x=:\mean{f}_{\Omega}.$$
 \item For a function $u : \Omega  \mapsto \mathbb{R}^{3}$ we define its symmetric gradient by \[
\varepsilon u:=\frac{\nabla u+\left(\nabla u\right)^{T}}{2}. 
\]
\item Throughout the paper we usually use the letter $B$ for a ball and $Q$   for a cube with sides parallel to the axis.
\end{enumerate}

{\bf Maximal function.} The Hardy-Littlewood maximal operator is defined by 
\begin{align}
\label{eq:max}
L_{\mathrm{loc}}^{1}\left(\Omega\right)\ni f\mapsto Mf\left(x\right):=\sup_{B\ni x}\fint_{B}\left|f\left(y\right)\right|\chi_{\Omega}\left(y\right)\mathrm{d}y\in\left[0,\ \infty\right]
\end{align}
 where the supremum is considered over all the open balls that contatin $x$. This definition is extended for vector-valued functions $v\in L_{\mathrm{loc}}^{1}\left(\Omega;\mathbb{R}^{n}\right)$ by $M\left(v\right)\left(x\right):=M\left(\left|v\right|\right)\left(x\right)$.

{\bf Weights and weighted spaces.} The following notions that involve weights and weighted spaces  are well known and  we closely follow their exposure from \cite[Section 3]{BuDS16}. \\
 A function $\omega : \mathbb{R}^{n} \to \mathbb{R}$ is called a \textit{weight} if it is measurable, positive and finite almost everywhere. Given a weight $\omega$ we can define the space 
$$L_{\omega}^{p}\left(\Omega\right):=\left\{ u:\Omega\to\mathbb{R}^{n};\ \left\Vert f\right\Vert _{L_{\omega}^{p}}:=\left(\int_{\Omega}\left|u(x)\right|^{p}\omega\left(x\right)\mathrm{d}x\right)^{1/p}<\infty\right\} $$ with $1 \le p < \infty$. 
Similarly we can define the following  weighted Sobolev space :
\[
W_{\omega}^{k,q}\left(\Omega\right):=\left\{ u\in L_{\omega}^{q}\left(\Omega\right);\ \left\Vert u\right\Vert _{W_{\omega}^{k,q}\left(\Omega\right)}:=\sum_{l=0}^k\norm{\nabla^l u}_{L_{\omega}^{q}\left(\Omega\right)}<\infty\right\} 
\]
As $W^{k,q}_{0,\omega}(\Omega)$ we denote the closure of $C^\infty_0(\Omega)$ with respect to the respective weighted Sobolev norm.

{\bf Muckenhoupt weights.} We say that a weight belongs to the Muckenhoupt class $\mathcal{A}_{p}$ if and  only if
for every ball $B \subset \mathbb{R}^{n}$ we have that $$\left(\fint_{B}\omega\mathrm{d}x\right)\left(\fint_{B}\omega^{-\left(p^{\prime}-1\right)}\mathrm{d}x\right)^{1/\left(p^{\prime}-1\right)}\le A\quad\mathrm{if}\ p\in\left(1,\infty\right)$$ or $M\omega(x) \le A\omega(x)$ if $p=1$.  The smallest constant $A$ for which these inequalities hold is called the {\em Muckehoupt constant} and is denoted by $\mathcal{A}_{p}(\omega)$. One of the special features of these weights is contained in the seminal result due to B. Muckenhoup~\cite{Mu72}: if $1<p<\infty$ we have that  $\omega \in \mathcal{A}_{p}$  if and only if there exists a constant $A^{\prime}$ such that for any $f\in L^{p} (\mathbb{R})$ it follows that 
\begin{align}
\label{eq:Muc}
\int\left|Mf\right|^{p}\omega\mathrm{d}x\le A^{\prime}\int\left|f\right|^{p}\omega\mathrm{d}x.
\end{align}
\subsection{Preliminary results} 
We state below estimates and results that are further needed in the proofs of our results. 

{\bf Young's inequality with $\varepsilon$.} The following (elementary) inequality will be  used intensively: 
$$ab\le\varepsilon\frac{a^{p}}{p}+C\left(\varepsilon\right)\frac{b^{p^{\prime}}}{p^{\prime}}\ \text{ for all } a,\ b\ge0,\ \varepsilon>0,\ p>1$$ 
where $p^{\prime}$ denotes the H\"older exponent associated to $p$, that is $\frac{1}{p^{\prime}}:=1-\frac{1}{p}$ and  $C(\varepsilon)$ is a positive constant depending of $\varepsilon$. 

{\bf Estimates for Muckenhoupt weights.}
 The following two lemmas contain useful properties for the Muckenhoupt weights that we will also need.
\begin{lemma}\label{weight1} \cite[p. 5-6;]{Turesson 2000}
\label{lem:tur1}
 Let $\omega \in \mathcal{A}_{p}$ for some $p\in [1, \infty)$. Then $\omega \in \mathcal{A}_{q} $ for all $q\ge p$. Also $\omega \in \mathcal{A}_{p}$ is equivalent to $\omega^{-(p^{\prime}-1)} \in \mathcal{A}_{p^{\prime}}$.
\end{lemma}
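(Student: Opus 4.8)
Both assertions are standard facts about Muckenhoupt weights, and the plan is to derive them purely from the defining averaged inequality by elementary convexity (Jensen/H\"older) estimates; only the bookkeeping of exponents needs care, so I merely indicate the computations. Throughout I use $p'-1=\tfrac{1}{p-1}$ and $(p-1)(p'-1)=1$.

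For the nesting $\mathcal{A}_p\subset\mathcal{A}_q$ with $q\ge p$, I would fix a ball $B$ and rewrite the $\mathcal{A}_p$ condition as $\bigl(\fint_B\omega\bigr)\bigl(\fint_B\omega^{-1/(p-1)}\bigr)^{p-1}\le\mathcal{A}_p(\omega)$. When $1<p\le q<\infty$ the exponent $\tfrac{q-1}{p-1}\ge1$, so $t\mapsto t^{(q-1)/(p-1)}$ is convex and Jensen's inequality applied to $g=\omega^{-1/(q-1)}$ gives, after raising to the power $p-1$, the comparison $\bigl(\fint_B\omega^{-1/(q-1)}\bigr)^{q-1}\le\bigl(\fint_B\omega^{-1/(p-1)}\bigr)^{p-1}$; multiplying by $\fint_B\omega$ and taking the supremum over $B$ yields $\mathcal{A}_q(\omega)\le\mathcal{A}_p(\omega)<\infty$. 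The endpoint $p=1$ is treated separately: from $M\omega\le A\omega$ a.e.\ one first extracts $\fint_B\omega\le A\,\inf_B\omega$ for every ball $B$ (with $\inf_B$ the essential infimum, using $\fint_B\omega\le M\omega(x)\le A\omega(x)$ for a.e.\ $x\in B$), and then bounds $\bigl(\fint_B\omega^{-1/(q-1)}\bigr)^{q-1}\le(\inf_B\omega)^{-1}$, so that the product stays $\le A=\mathcal{A}_1(\omega)$.

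For the duality statement I would set $\sigma:=\omega^{-(p'-1)}=\omega^{-1/(p-1)}$ and compute the $\mathcal{A}_{p'}$ functional of $\sigma$ on a ball $B$ directly: since $\tfrac{1}{p'-1}=p-1$ one has $\sigma^{-(p-1)}=\omega$, hence $\bigl(\fint_B\sigma\bigr)\bigl(\fint_B\sigma^{-(p-1)}\bigr)^{p'-1}=\bigl(\fint_B\omega^{-1/(p-1)}\bigr)\bigl(\fint_B\omega\bigr)^{p'-1}=\bigl[\bigl(\fint_B\omega\bigr)\bigl(\fint_B\omega^{-1/(p-1)}\bigr)^{p-1}\bigr]^{p'-1}$, the last equality using $(p-1)(p'-1)=1$. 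Taking suprema over $B$ gives $\mathcal{A}_{p'}(\sigma)=\mathcal{A}_p(\omega)^{p'-1}$, so $\omega\in\mathcal{A}_p$ if and only if $\sigma\in\mathcal{A}_{p'}$; and since $(p')'=p$ and $\sigma^{-((p')'-1)}=\sigma^{-(p-1)}=\omega$, the map is an involution, which gives the equivalence in both directions. I do not expect a genuine obstacle here: the only delicate points are isolating the $p=1$ case and keeping the reciprocal-exponent identities straight.
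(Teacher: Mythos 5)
Your proposal is correct: the Jensen/H\"older comparison of the dual averages for $1<p\le q$, the separate essential-infimum argument at $p=1$, and the exact identity $\mathcal{A}_{p'}\bigl(\omega^{-(p'-1)}\bigr)=\mathcal{A}_{p}(\omega)^{p'-1}$ (an involution since $(p')'=p$) together constitute the standard proof of both assertions, and the exponent bookkeeping checks out. Note that the paper offers no proof of this lemma at all --- it is quoted directly from Turesson's book --- so there is nothing internal to compare against; the only point worth adding is that the duality statement (and your use of $p'-1=\tfrac{1}{p-1}$) tacitly requires $p>1$, whereas the lemma's hypothesis allows $p=1$ for the nesting part, so you should state that restriction explicitly.
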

\begin{lemma}\label{weight2} \cite[p. 5-6;]{Turesson 2000}
\label{lem:tur2}
  Let $f\in L^{1}_{\mathrm{loc}}(\mathbb{R}^{n})$ such that $Mf< \infty$ almost everywhere in $\mathbb{R}^{n}$. Then for all $\alpha \in (0,1)$ we have that $(Mf)^{\alpha} \in \mathcal{A}_{1}$. Furthermore, for all $\alpha \in (0,1)$, we have $(Mf)^{-\alpha(p-1)} \in \mathcal{A}_{p}$.
\end{lemma}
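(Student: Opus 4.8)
The plan is to reduce the second assertion to the first one by means of Lemma~\ref{weight1}, so that the real work lies in the Coifman--Rochberg type statement $(Mf)^{\alpha}\in\mathcal{A}_{1}$ for $\alpha\in(0,1)$. One may assume $f$ is not almost everywhere zero, the finiteness of the weight being guaranteed by the standing hypothesis $Mf<\infty$ a.e. Recalling the definition \eqref{eq:max} of $M$, the $\mathcal{A}_{1}$ condition $M\omega\le A\omega$ a.e.\ is implied by the stronger pointwise-infimum bound
\[
\fint_{B}\omega\,\mathrm{d}x\le A\,\inf_{z\in B}\omega(z)\qquad\text{for every ball }B\subset\mathbb{R}^{n},
\]
since then, for a.e.\ $x$, $M\bigl((Mf)^{\alpha}\bigr)(x)=\sup_{B\ni x}\fint_{B}(Mf)^{\alpha}\le A\sup_{B\ni x}\inf_{z\in B}(Mf(z))^{\alpha}\le A\,(Mf(x))^{\alpha}$. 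So it suffices to prove the displayed estimate for $\omega=(Mf)^{\alpha}$.

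To do this I would fix a ball $B$ of radius $r$ and split $f=g+h$ with $g:=f\chi_{2B}$ and $h:=f\chi_{\mathbb{R}^{n}\setminus 2B}$. Because $Mf\le Mg+Mh$ pointwise and $t\mapsto t^{\alpha}$ is subadditive on $[0,\infty)$ for $\alpha\in(0,1)$, one gets $(Mf)^{\alpha}\le(Mg)^{\alpha}+(Mh)^{\alpha}$, and it is enough to bound $\fint_{B}(Mg)^{\alpha}$ and $\fint_{B}(Mh)^{\alpha}$ separately. For the local piece I would invoke Kolmogorov's inequality: feeding the weak type $(1,1)$ estimate for $M$ (stated in the excerpt above) into the layer-cake identity $\int_{B}(Mg)^{\alpha}=\alpha\int_{0}^{\infty}\lambda^{\alpha-1}\abs{\{x\in B:Mg(x)>\lambda\}}\,\mathrm{d}\lambda$ and splitting the $\lambda$-integral at $\lambda_{0}=c\,\norm{g}_{L^{1}}/\abs{B}$ gives
\[
\fint_{B}(Mg)^{\alpha}\,\mathrm{d}x\le\frac{C_{\alpha}}{\abs{B}^{\alpha}}\,\norm{g}_{L^{1}(\mathbb{R}^{n})}^{\alpha}\le C_{\alpha,n}\Bigl(\fint_{2B}\abs{f}\,\mathrm{d}x\Bigr)^{\alpha}\le C_{\alpha,n}\,\inf_{z\in B}(Mf(z))^{\alpha},
\]
where convergence of the tail integral is precisely where $\alpha<1$ enters, and the last inequality holds because $2B$ is a ball containing every $z\in B$.

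For the global piece I would prove the pointwise bound $Mh(y)\le C_{n}\inf_{z\in B}Mf(z)$ for all $y\in B$: any ball $B'=B(y,\rho)$ that meets $\supp h\subset\mathbb{R}^{n}\setminus 2B$ necessarily has $\rho>r$, so for any $z\in B$ one has $B'\subset B(z,3\rho)$ and hence $\fint_{B'}\abs{h}\le 3^{n}\fint_{B(z,3\rho)}\abs{f}\le 3^{n}Mf(z)$; taking the supremum over admissible $B'$ gives the claim, whence $\fint_{B}(Mh)^{\alpha}\le C_{n}^{\alpha}\inf_{z\in B}(Mf(z))^{\alpha}$. Adding the two contributions establishes $(Mf)^{\alpha}\in\mathcal{A}_{1}$.

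For the second assertion, the first part of Lemma~\ref{weight1} gives $\mathcal{A}_{1}\subset\mathcal{A}_{p'}$, so $(Mf)^{\alpha}\in\mathcal{A}_{p'}$, and then the duality equivalence in Lemma~\ref{weight1} applied with exponent $p'$ (whose H\"older conjugate is $p$) yields $\bigl((Mf)^{\alpha}\bigr)^{-(p-1)}=(Mf)^{-\alpha(p-1)}\in\mathcal{A}_{p}$, as wanted; alternatively the $\mathcal{A}_{p}$ balance can be checked by hand, using $\omega^{-1/(p-1)}=(Mf)^{\alpha}$ together with $\fint_{B}\omega\le(\inf_{B}Mf)^{-\alpha(p-1)}$ and the $\mathcal{A}_{1}$ bound already proved. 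I expect the main obstacle to be the first assertion, specifically the pointwise control of the tail term $Mh$ and the quantitative use of $\alpha<1$ in Kolmogorov's inequality; everything after that is routine.
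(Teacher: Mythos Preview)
Your proof is correct and is exactly the standard Coifman--Rochberg argument. Note, however, that the paper does not supply its own proof of this lemma: it is quoted verbatim from \cite[p.~5--6]{Turesson 2000} and used as a black box, so there is nothing to compare against beyond observing that what you have written is the classical route to this result.
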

{\bf Korn and Poincar\'e 's inequalities.}
\begin{theorem}[Korn, Poincar\'{e}]
 Let  $q \in (1, \ \infty)$ and $\Omega$ be a bounded Lipschitz domain and $u\in W^{1,p}_{0}\left(\Omega;\ \mathbb{R}^{d}\right)$ . We have \[\left\Vert u\right\Vert _{W^{1,p}\left(\Omega\right)}\le c_{1}\left\Vert \nabla u\right\Vert _{L^{p}\left(\Omega\right)}\le c_{2}\left\Vert \varepsilon u\right\Vert _{L^{p}\left(\Omega\right)}\]
where $c_1$ and $c_2$ depend only on $\Omega$ and $q$.
\end{theorem}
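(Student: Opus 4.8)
The asserted chain consists of two unrelated estimates: the Poincar\'e inequality $\norm{u}_{W^{1,p}(\Omega)}\le c_1\norm{\nabla u}_{L^p(\Omega)}$ and Korn's inequality $\norm{\nabla u}_{L^p(\Omega)}\le c_2\norm{\varepsilon u}_{L^p(\Omega)}$, both for functions with zero boundary values. Since all three norms are continuous functionals on $W^{1,p}_0(\Omega)$ and, by definition of $W^{1,p}_0$, the space $C_0^\infty(\Omega;\setR^d)$ is dense there, my plan is to establish both bounds for $u\in C_0^\infty(\Omega;\setR^d)$ and then pass to the limit.

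For the Poincar\'e estimate I would only use boundedness of $\Omega$: choose $L>0$ with $\Omega\subset\{x\in\setR^d:\ 0<x_1<L\}$ and write, for $u\in C_0^\infty(\Omega)$ extended by zero, $u(x)=\int_0^{x_1}\partial_1 u(t,x_2,\dots,x_d)\dd t$. H\"older's inequality gives $\abs{u(x)}^p\le L^{p-1}\int_0^L\abs{\partial_1 u(t,x_2,\dots,x_d)}^p\dd t$, and integrating in $x$ over the slab yields $\norm{u}_{L^p(\Omega)}\le L\,\norm{\nabla u}_{L^p(\Omega)}$. Adding $\norm{\nabla u}_{L^p(\Omega)}$ to both sides produces the first inequality with $c_1=c_1(L,p)$; no regularity of $\partial\Omega$ enters here.

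Korn's inequality I would reduce to a whole-space statement. If $\tilde u\in C_0^\infty(\setR^d;\setR^d)$ denotes the zero extension of $u\in C_0^\infty(\Omega;\setR^d)$, then $\nabla\tilde u$ and $\varepsilon\tilde u$ are the zero extensions of $\nabla u$ and $\varepsilon u$, so it suffices to prove $\norm{\nabla v}_{L^p(\setR^d)}\le c\,\norm{\varepsilon v}_{L^p(\setR^d)}$ for every $v\in C_0^\infty(\setR^d;\setR^d)$. Setting $E=\varepsilon v$ and taking Fourier transforms, the relations $\xi_i\hat v_j+\xi_j\hat v_i=-2\ii\,\hat E_{ij}$ can be solved for $\hat v$ by contracting once with $\xi_j$ and once with $\xi_i\xi_j$, which after multiplying by $\ii\xi_k$ yields
\[
\widehat{\partial_k v_i}(\xi)=\sum_{j}\frac{2\xi_k\xi_j}{\abs{\xi}^2}\,\hat E_{ij}(\xi)-\sum_{j,l}\frac{\xi_i\xi_k\xi_j\xi_l}{\abs{\xi}^4}\,\hat E_{jl}(\xi).
\]
The multipliers $\xi_k\xi_j/\abs{\xi}^2$ and $\xi_i\xi_k\xi_j\xi_l/\abs{\xi}^4$ are homogeneous of degree $0$ and smooth off the origin, hence bounded on $L^p(\setR^d)$ for every $p\in(1,\infty)$ by the Mihlin--H\"ormander multiplier theorem (equivalently, they are finite combinations of iterated Riesz transforms). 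This gives $\norm{\nabla v}_{L^p(\setR^d)}\le c(p,d)\,\norm{\varepsilon v}_{L^p(\setR^d)}$; restricting back to $\Omega$ and combining with Poincar\'e yields the second inequality with $c_2=c_2(\Omega,p)$, which completes the proof.

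The only genuinely non-elementary ingredient is Korn's inequality, which rests on Calder\'on--Zygmund theory and, crucially, \emph{fails} at the endpoints $p=1$ and $p=\infty$; this is precisely why the hypothesis $p\in(1,\infty)$ is essential. What keeps the argument short in the present zero--trace setting is the extension-by-zero device, which removes all dependence on the geometry of $\partial\Omega$; for the inhomogeneous Korn inequality on $W^{1,p}(\Omega)$ one would instead need Ne\v{c}as' negative-norm estimate together with properties of the Lipschitz domain, e.g.\ via the {\Bogovskij} operator. Alternatively, both inequalities may simply be quoted from the standard literature.
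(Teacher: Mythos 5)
Your argument is correct, and it is complete for the statement as intended (note the paper's statement has a harmless typo, $q$ versus $p$, which you silently resolve by taking the integrability exponent $p\in(1,\infty)$ throughout). The paper itself offers no proof: this theorem sits in the ``Basic tools'' section and is simply quoted as standard, with only its weighted analogue attributed to the literature (Theorem~\ref{thm:kornweight}, from \cite{DRS10}). So there is nothing to compare step by step; what you supply is the standard self-contained proof of the quoted fact. Your two ingredients are exactly the right ones for the zero-trace setting: the one-dimensional slab argument for Poincar\'e, which indeed uses only boundedness of $\Omega$ and not the Lipschitz property, and the extension-by-zero reduction of Korn to the whole-space inequality $\norm{\nabla v}_{L^p(\setR^d)}\le c\,\norm{\varepsilon v}_{L^p(\setR^d)}$ for $v\in C_0^\infty$, which your Fourier computation establishes correctly (the recovered multipliers $\xi_k\xi_j/\abs{\xi}^2$ and $\xi_i\xi_k\xi_j\xi_l/\abs{\xi}^4$ are homogeneous of degree zero and smooth off the origin, i.e.\ compositions of Riesz transforms, hence $L^p$-bounded for $1<p<\infty$). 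The density argument closing the proof is legitimate since both sides of each inequality are continuous in the $W^{1,p}$ norm. Your closing remarks are also accurate: the restriction $p\in(1,\infty)$ is genuinely needed (Ornstein's non-inequality rules out $p=1,\infty$), and for the inhomogeneous Korn inequality without zero trace one would need additional input tied to the Lipschitz geometry, which is precisely why the paper's weighted version on Lipschitz/John domains is cited from \cite{DRS10} rather than proved.
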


%\[
%p^*:=\frac{np}{n-p}\text{ if }p<n\text{ and as arbitrary large otherwise.} 
%\]
%We find for $p<n$ and $\omega^\frac{n-p}{n}\in \Acal_{p^*-\frac{1}{n-p}}$ that \seb{$\Omega$ Lipschitz??}
%For $p>n$ we find \seb{ADD}
%\newpage
%{\bf An embedding theorem.} The following result is contained, not explicitly however, in \cite{Turesson 2000} at p. 25. 

%\begin{align}
%\label{eq:poinc}
%\bigg(\int_{\Omega}\Big(\left|u\left(x\right)-\mean{u}_{\Omega}\right|^{p}\omega(x)\Big)^\frac{n}{n-p}\mathrm{d}x\bigg)^\frac{n-p}{n}\le C\int_{\Omega}\left|\nabla u\left(x\right)\right|^{p}\omega(x)\mathrm{d}x.
%\end{align}

\begin{theorem} [Muckenhoupt, Wheeden]\label{thm:weight-embedding}
\label{thm:Tur}
If $1<p<3$ we define the \textit{Sobolev exponent (in dimension $3$)} by  $p^{*}:=\frac{3p}{3-p}$. Suppose $\omega\in A_{p^{*}/p^{\prime}+1}$. Then if $ u\in W_{0}^{1,1}\left(\Omega\right)$ with $\nabla u\in L_{\omega^{\left(3-p\right)/3}}^{p}\left(\Omega\right)$ we have 
\[\left(\int_{\Omega}\left|u\right|^{p^{*}}\omega \text{d}x\right)^{1/p^{*}}\le c\left(p,A_{p^{*}/p^{\prime}+1}\left(\omega\right)\right)\left(\int_{\Omega}\left|\nabla u\right|^{p}\omega^{\left(3-p\right)/3}\text{d}x\right)^{1/p}\]
\end{theorem}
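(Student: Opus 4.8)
The plan is to reduce the inequality to a weighted estimate for the Riesz potential of the gradient and then to invoke the Muckenhoupt--Wheeden theorem on weighted norm inequalities for fractional integrals. \textbf{Step 1 (reduction to a Riesz potential).} Since $u\in W^{1,1}_0(\Omega)$, extending $u$ and $\nabla u$ by zero outside $\Omega$ produces a compactly supported function in $W^{1,1}(\setR^3)$. Writing $u(x)$ as an integral of its radial derivative along rays issuing from $x$ and averaging over the unit sphere gives the classical pointwise bound
\[
\abs{u(x)}\le c\int_{\setR^3}\frac{\abs{\nabla u(y)}}{\abs{x-y}^{2}}\dd y\qquad\text{for a.e. }x\in\setR^3 .
\]
Writing $I_1 g(x):=\int_{\setR^3}\abs{x-y}^{-2}g(y)\dd y$ for the Riesz potential of order $1$ in dimension $3$, and noting that $\Omega$ is bounded so that both integrals in the assertion may be taken over $\setR^3$ (with $\nabla u$ understood to vanish outside $\Omega$), the right-hand side being finite by hypothesis, it suffices to prove
\[
\left(\int_{\setR^3}\abs{I_1 g}^{p^*}\omega\dd x\right)^{1/p^*}\le c\left(\int_{\setR^3}\abs{g}^{p}\,\omega^{(3-p)/3}\dd x\right)^{1/p}
\]
for every measurable $g\ge0$, where $\tfrac1{p^*}=\tfrac1p-\tfrac13$.

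\textbf{Step 2 (Muckenhoupt--Wheeden).} I would then recall the weighted norm inequality for fractional integrals in its power-weight form, which is exactly what the computation on p.~25 of \cite{Turesson 2000} records: for $0<\alpha<n$, $1<p<n/\alpha$ and $\tfrac1q=\tfrac1p-\tfrac\alpha n$, one has
\[
\left(\int_{\setR^n}\left(I_\alpha f\cdot W\right)^{q}\dd x\right)^{1/q}\le c\left(\int_{\setR^n}\left(\abs f\,W\right)^{p}\dd x\right)^{1/p}
\]
whenever the weight $W$ satisfies the fractional Muckenhoupt condition
\[
\sup_{B}\left(\fint_{B}W^{q}\dd x\right)^{1/q}\left(\fint_{B}W^{-p'}\dd x\right)^{1/p'}<\infty ,
\]
the supremum ranging over all balls $B\subset\setR^n$.

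\textbf{Step 3 (matching the weight classes).} Apply Step 2 with $n=3$, $\alpha=1$, $q=p^*$ and $W:=\omega^{1/p^*}$. Then $W^{p^*}=\omega$, while $W^{p}=\omega^{p/p^*}=\omega^{(3-p)/3}$ by the very definition $p^*=3p/(3-p)$, so the displayed fractional-integral estimate is precisely the one required in Step 1. To verify that $W$ meets the fractional Muckenhoupt condition, raise the quantity under the supremum to the power $p^*$; it turns into
\[
\sup_{B}\left(\fint_{B}\omega\dd x\right)\left(\fint_{B}\omega^{-p'/p^*}\dd x\right)^{p^*/p'},
\]
which is finite and bounded by $A_{p^*/p'+1}(\omega)^{p^*}$ precisely because $\omega\in A_r$ with $r-1=p^*/p'$, i.e.\ $r=p^*/p'+1$ --- the standing hypothesis. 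Tracking the constants through the three steps yields the claimed inequality with $c=c\!\left(p,A_{p^*/p'+1}(\omega)\right)$.

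\textbf{Main obstacle.} There is no deep difficulty here: the one place that demands care is Step 3, where one must confirm that the \emph{same} weight $W=\omega^{1/p^*}$ is admissible on both sides of the Muckenhoupt--Wheeden inequality and that its fractional Muckenhoupt condition is equivalent to the hypothesis $\omega\in A_{p^*/p'+1}$. Should one wish to avoid quoting that theorem, the fractional-integral estimate of Step 1 can be re-derived from scratch by a Hedberg-type splitting of $I_1 g$ into a local piece, dominated by a weighted maximal function and handled via \eqref{eq:Muc} applied to $\omega^{(3-p)/3}\in A_p$ (a consequence of $\omega\in A_{p^*/p'+1}$), and a tail piece estimated by H\"older's inequality together with the $A_{p^*/p'+1}$ condition on $\omega$; but this is only a longer road to the same conclusion.
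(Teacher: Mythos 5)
Your proposal is correct. The paper gives no proof of this statement at all---it is quoted as a background tool with the remark that it is contained ``not explicitly'' in Turesson's book---so there is no in-paper argument to compare with; your route (extend by zero, dominate $|u|$ pointwise by the Riesz potential $I_1(|\nabla u|\chi_\Omega)$, then apply the Muckenhoupt--Wheeden weighted inequality for fractional integrals with the weight $W=\omega^{1/p^*}$, for which $W^{p^*}=\omega$ and $W^{p}=\omega^{p/p^*}=\omega^{(3-p)/3}$) is exactly the standard derivation lying behind that citation, and your identification of the $A_{p,p^*}$ condition for $W$ with the hypothesis $\omega\in A_{p^*/p'+1}$ is the correct computation, since raising the $A_{p,p^*}$ quantity to the power $p^*$ produces precisely the $A_{p^*/p'+1}$ expression for $\omega$. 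The only blemish is cosmetic: that raised quantity is therefore bounded by $A_{p^*/p'+1}(\omega)$, so the $A_{p,p^*}$ constant of $W$ is at most $A_{p^*/p'+1}(\omega)^{1/p^*}$ rather than the stated $A_{p^*/p'+1}(\omega)^{p^*}$; since Muckenhoupt constants are $\ge 1$ this changes nothing in the conclusion or in the dependence $c=c\bigl(p,A_{p^*/p'+1}(\omega)\bigr)$.
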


Moreover, we find by~\cite[Theorem~5.1]{DRS10} for $\Omega$ Lipschitz and $\nabla u\in L^p_\omega(\Omega)$, that
\begin{align}
\label{eq:weightponc}
\int_{\Omega}\left|u-\mean{u}_\Omega\right|^{p}\omega \text{d}x\le c\left(p,A_{p}\left(\omega\right)\right)\int_{\Omega}\left|\nabla u\right|^{p}\omega\text{d}x.
\end{align}

{\bf Weighted Korn inequality.} We record here the following weighted version of Korn's inequality . It appears in  \cite[Theorem 5.15]{DRS10} . 
\begin{theorem}
\label{thm:kornweight} Let $\Omega \subset \mathbb{R}^{d}$ be a bounded domain  , $q \in (1, \infty)$ and $\omega \in \mathcal{A}_{q}$. Then for all $u\in W_{\omega,0}^{1,q}\left(\Omega\right)$ the following inequality holds: \[
\left\Vert \nabla u\right\Vert _{L_{\omega}^{q}\left(\Omega\right)}\le c\left\Vert \varepsilon u\right\Vert _{L_{\omega}^{q}\left(\Omega\right)}
\]
 where $c$ only depends on $q$ and $A_{q}(\omega)$.

\end{theorem}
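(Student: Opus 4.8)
The statement reduces to a purely global estimate on $\setR^{d}$ once the vanishing boundary values are exploited, so the plan is to transplant the problem to the whole space and close it with weighted Calder\'on--Zygmund theory. We may assume $\varepsilon u\in L^{q}_{\omega}(\Omega)$, otherwise there is nothing to prove. Since $W^{1,q}_{\omega,0}(\Omega)$ is by definition the closure of $C_{0}^{\infty}(\Omega)$ in the weighted norm, I would first reduce, by density, to $u\in C_{0}^{\infty}(\Omega;\setR^{d})$ with a constant independent of $u$, and then extend $u$ by zero so as to regard it as a compactly supported smooth field on $\setR^{d}$; because $\supp u\subset\Omega$, both norms in the claimed inequality are unchanged if $\Omega$ is replaced by $\setR^{d}$. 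This is precisely where the hypothesis $u\in W^{1,q}_{\omega,0}$ (and not merely $W^{1,q}_{\omega}$) enters, and it is the reason the final constant can be taken independent of $\Omega$ and of the regularity of $\partial\Omega$.

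Next I would invoke the elementary pointwise Korn identity $\partial_{j}\partial_{k}u_{i}=\partial_{k}(\varepsilon u)_{ij}+\partial_{j}(\varepsilon u)_{ik}-\partial_{i}(\varepsilon u)_{jk}$, which follows immediately from $(\varepsilon u)_{ij}=\tfrac12(\partial_{i}u_{j}+\partial_{j}u_{i})$. Contracting $k=j$ gives $-\Delta u_{i}=-2\sum_{j}\partial_{j}(\varepsilon u)_{ij}+\partial_{i}\,\divergence u$ with $\divergence u=\sum_{l}(\varepsilon u)_{ll}$. Convolving with the fundamental solution $\Phi$ of $-\Delta$ (legitimate since $u\in C_{0}^{\infty}$) and differentiating once more, each first derivative $\partial_{j}u_{i}$ is exhibited as a fixed linear combination -- coefficients depending only on $d$ -- of the operators $g\mapsto(\partial_{j}\partial_{k}\Phi)*g$ applied to components of $\varepsilon u$, i.e.\ of compositions of at most two Riesz transforms, modulo multiples of the identity. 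Schematically, $\nabla u=\mathcal{T}(\varepsilon u)$ with $\mathcal{T}$ a matrix of Calder\'on--Zygmund operators that is the same for every $u$.

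It then remains to use the weighted $L^{q}$ bound for such operators: for $1<q<\infty$ and $\omega\in\mathcal{A}_{q}$ one has $\|\mathcal{T}g\|_{L^{q}_{\omega}(\setR^{d})}\le c\,\|g\|_{L^{q}_{\omega}(\setR^{d})}$ with $c=c(q,d,\mathcal{A}_{q}(\omega))$. This is the Hunt--Muckenhoupt--Wheeden / Coifman--Fefferman circle of results: the Coifman--Fefferman good-$\lambda$ inequality bounds the maximal truncated singular integral by the Hardy--Littlewood maximal function in $L^{q}_{\omega}$ for every $\mathcal{A}_{\infty}$ weight, and the weighted maximal estimate \eqref{eq:Muc}, valid exactly for $\omega\in\mathcal{A}_{q}$, closes the loop; only the Muckenhoupt constant enters quantitatively (see \cite{stein,Mu72}, and \cite{DRS10} for the form used here). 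Applying this componentwise to $\nabla u=\mathcal{T}(\varepsilon u)$ and undoing the reductions gives $\|\nabla u\|_{L^{q}_{\omega}(\Omega)}\le c(q,d,\mathcal{A}_{q}(\omega))\,\|\varepsilon u\|_{L^{q}_{\omega}(\Omega)}$.

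The main obstacle is this last step, and specifically the \emph{quantitative} character of the weighted bound for the (compositions of) Riesz transforms -- namely that the constant depends on the weight only through $\mathcal{A}_{q}(\omega)$; the rest is the algebraic Korn identity and a routine density/extension argument. Dropping the vanishing-trace hypothesis would collapse this strategy, forcing instead a localization/boundary-flattening argument together with a weighted Bogovski\u{\i} or negative-norm estimate, as in the proof of \cite[Theorem~5.15]{DRS10}; with zero boundary values none of that is needed.
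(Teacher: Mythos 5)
Your proof is correct, but note that it is not the paper's argument, because the paper does not prove Theorem~\ref{thm:kornweight} at all: it simply imports it from \cite[Theorem~5.15]{DRS10}. The proof in that reference goes through the decomposition technique for John domains (local Bogovski\u{i}-type operators and weighted negative-norm estimates), which is the machinery one needs when the zero-trace hypothesis is dropped and only a mean-value normalization is available. Your route --- reduce by density to $u\in C_0^\infty(\Omega;\setR^d)$, extend by zero, use the identity $\partial_j\partial_k u_i=\partial_k(\varepsilon u)_{ij}+\partial_j(\varepsilon u)_{ik}-\partial_i(\varepsilon u)_{jk}$ and the Newtonian potential to write $\nabla u$ as a fixed matrix of double Riesz transforms (modulo multiples of the identity) acting on the entries of $\varepsilon u$, then invoke the Hunt--Muckenhoupt--Wheeden/Coifman--Fefferman weighted bound for Calder\'on--Zygmund operators --- is a standard, complete and self-contained proof in the zero-trace case. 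What it buys is exactly what you point out: no boundary regularity is needed and the constant $c=c(d,q,\mathcal{A}_q(\omega))$ is independent of the geometry of $\Omega$ (the dependence on $d$ is implicit in the paper's statement). What the cited approach buys is generality: it also covers functions without vanishing trace on John domains, which your strategy cannot reach, as you correctly acknowledge. The two points your argument genuinely relies on --- that the density step is precisely where $W^{1,q}_{\omega,0}$ rather than $W^{1,q}_{\omega}$ enters, and that the weighted Calder\'on--Zygmund estimate must be used in its quantitative form with the constant depending on $\omega$ only through $\mathcal{A}_q(\omega)$ --- are both addressed, so there is no gap.
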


{\bf The Bogovski operator.}
 The following theorem will be essential for proving the existence of the pressure, in Theorem \ref{thm:main2}. See \cite{bog80} and~\cite[Theorem~5.2]{DRS10}.
\begin{theorem}[Bogovski] 
 Let $q \in (1, \infty)$ and $\Omega$ be a bounded Lipschitz domain and $\omega\in \mathcal{A}_q$. Then there exists a bounded linear operator 
\[
\mathrm{Bog}:L_{0,\omega}^{q}\left(\Omega\right)\mapsto W_{0,\omega}^{1,q}\left(\Omega\right)
\]
such that the system  \[
\begin{cases}
\mathrm{div}\mathrm{Bog}\left(g\right)=g & \mathrm{in}\ \Omega\\
g=0 & \mathrm{on}\ \partial\Omega
\end{cases}
\] has a weak solution and  for which we also have 
\[
\left\Vert \mathrm{Bog}\left(g\right)\right\Vert _{W^{1,q}_{\omega}\left(\Omega\right)}\le c\left\Vert g\right\Vert _{L^{q}_{\omega}\left(\Omega\right)}
\]
for some positive constant $c$ that only depends on $p$ and $\Omega$.  Here \[
L_{0,\omega}^{q}\left(\Omega\right):=\left\{ f\in L^{q}_\omega\left(\Omega\right):\ \int_{\Omega}f\, \text{d}x=0\right\} 
.\]
\end{theorem}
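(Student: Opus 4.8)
The plan is to reproduce \person{Bogovski\u{i}}'s explicit construction on star-shaped domains and to upgrade the classical Calder\'on--Zygmund bounds for the resulting integral operator to the Muckenhoupt-weighted setting, obtaining the Lipschitz case by a partition-of-unity patching. All constants below are permitted to depend on $q$, $\Omega$ and $\mathcal{A}_q(\omega)$ only.

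First I would treat a domain star-shaped with respect to a ball $B$. Fix $\theta\in C_0^\infty(B)$ with $\int_B\theta\,\mathrm{d}x=1$ and, for $g\in L^{q}_{0,\omega}(\Omega)$, define
\[
\mathrm{Bog}(g)(x):=\int_\Omega g(y)\,\frac{x-y}{\abs{x-y}^3}\int_{\abs{x-y}}^\infty \theta\Big(y+r\tfrac{x-y}{\abs{x-y}}\Big)r^2\,\mathrm{d}r\,\mathrm{d}y .
\]
Integration by parts in $r$, together with $\int_\Omega g\,\mathrm{d}x=0$, gives $\divergence\,\mathrm{Bog}(g)=g$ in $\Omega$, while the support of $\theta$ forces $\mathrm{Bog}(g)$ to vanish near $\partial\Omega$, so after the usual regularization one gets $\mathrm{Bog}(g)\in W^{1,1}_0(\Omega)$; none of this involves $\omega$. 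Linearity is immediate from the formula.

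The analytic heart is the estimate $\norm{\nabla\mathrm{Bog}(g)}_{L^q_\omega(\Omega)}\lesssim\norm{g}_{L^q_\omega(\Omega)}$. Differentiating yields $\partial_i\mathrm{Bog}(g)_j=T_{ij}g+R_{ij}g$, where $T_{ij}$ is a principal-value singular integral whose kernel is, up to smooth bounded factors coming from $\theta$, a second derivative of the Newtonian potential --- homogeneous of degree $-3$ in $x-y$, with vanishing mean on spheres and satisfying the standard size and H\"ormander conditions, hence a genuine Calder\'on--Zygmund operator --- while $R_{ij}$ has a kernel controlled by $\abs{x-y}^{-2}$. For $T_{ij}$ one invokes the classical weighted bound for Calder\'on--Zygmund operators, which follows from \eqref{eq:Muc} together with a sharp-maximal-function argument (using that $\omega\in\mathcal{A}_q\subset\mathcal{A}_{q/\delta}$ for $\delta\in(0,1)$ by Lemma~\ref{lem:tur1}), so $\norm{T_{ij}g}_{L^q_\omega(\setR^3)}\lesssim\norm{g}_{L^q_\omega(\setR^3)}$ with norm depending only on $q$ and $\mathcal{A}_q(\omega)$. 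The weakly singular operator $R_{ij}$ is bounded on $L^q_\omega(\Omega)$ by weighted fractional-integral estimates on the bounded set $\Omega$, as recorded in \cite[Section~5]{DRS10}. Combining these with the weighted Poincar\'e inequality \eqref{eq:weightponc} furnishes the full $W^{1,q}_\omega$ bound on a star-shaped domain.

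Finally, for a general bounded Lipschitz $\Omega$ one writes $\Omega=\bigcup_{k=1}^N\Omega_k$ with each $\Omega_k$ star-shaped with respect to a ball, fixes a subordinate partition of unity, and decomposes any $g\in L^q_{0,\omega}(\Omega)$ into finitely many pieces $g_k$ supported in $\Omega_k$ with $\int_{\Omega_k}g_k\,\mathrm{d}x=0$, redistributing mass through cut-off functions supported in the overlaps; since $\omega\in\mathcal{A}_q$ implies $\int_\Omega\omega\,\mathrm{d}x<\infty$ and $\int_\Omega\omega^{1-q'}\,\mathrm{d}x<\infty$ (as $\omega^{1-q'}\in\mathcal{A}_{q'}$ by Lemma~\ref{lem:tur1}), this splitting is bounded in $L^q_\omega$ with constant depending only on $\mathcal{A}_q(\omega)$ and the fixed geometry. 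Applying the star-shaped operators to the $g_k$ and summing defines $\mathrm{Bog}$ on $\Omega$ with the claimed estimate. The main obstacle is bookkeeping rather than conceptual: one must verify that the remainder $R_{ij}$ and every mass-redistribution map carry weighted bounds expressible purely through $\mathcal{A}_q(\omega)$, so that no uncontrolled dependence on $\omega$ creeps in --- which is exactly what the computations of \cite[Section~5]{DRS10} provide, and accordingly in the paper we simply quote that reference.
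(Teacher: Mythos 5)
This theorem enters the paper only as a quoted tool, with its proof delegated to \cite{bog80} and \cite[Theorem~5.2]{DRS10}, and your outline --- Bogovski\u{i}'s explicit kernel on a star-shaped domain, weighted Calder\'on--Zygmund and weakly singular estimates for its gradient with constants governed by $\mathcal{A}_q(\omega)$, and a partition-of-unity/mass-redistribution patching for a bounded Lipschitz domain --- is exactly the standard argument underlying those references, so I regard it as correct and essentially the same approach. Two cosmetic points to tighten: $\mathrm{Bog}(g)$ is compactly supported in $\Omega$ only when $g$ is, so membership in $W^{1,q}_{0,\omega}(\Omega)$ should be concluded by density of mean-zero $C_c^\infty$ functions (as you indicate), and \cite{DRS10} actually treats general John domains by decomposing the datum rather than the domain, though your finite star-shaped covering is equally valid for bounded Lipschitz $\Omega$.
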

{\bf Solenoidal, weighted, biting div--curl lemma.}
Finally, for the existence of solutions we need the following result that can be found in~\cite[Theorem~1.9]{BBS16}.

\begin{theorem}[solenoidal, weighted, biting div--curl lemma]
\label{thm:swbdcl}
  Let $\Omega\subset \setR^n$ denote an open, bounded set. Assume that for a given $q\in (1,\infty)$ and $\omega \in
  A_q$, there is a sequence of measurable, tensor-valued functions $a^k, s^k: \Omega \to \setR^{N\times n}$, $k \in \N$, such that $k$-uniformly
  \begin{equation}
\norm{a^k}_{L^q_\omega(\Omega)}   + \norm{s^k}_{L^{q'}_\omega(\Omega)}   \le C. \label{bit3}
  \end{equation}
  Furthermore, assume that for every bounded sequence
  $\{c^k\}_{k=1}^{\infty}$ in $W^{1,\infty}_{0} (\Omega)$ and for every bounded solenoidal sequence   $\{d^k\}_{k=1}^{\infty}$ in $W^{1,\infty}_{0, \divergence{}} (\Omega)$ such that
  $$
  \nabla c^k \rightharpoonup^* 0 \qquad \textrm{weakly$^*$ in }
  L^{\infty}(\Omega), \qquad   \nabla d^k \rightharpoonup^* 0 \qquad \textrm{weakly$^*$ in }
  L^{\infty}(\Omega)
  $$
one has
  \begin{align}
    \lim_{k\to \infty} \int_{\Omega} s^k \cdot \nabla d^k \, \text{d}x
    &=0, \label{bit4}
    \\
    \lim_{k\to \infty} \int_{\Omega} a^k_i \partial_{x_j} c^k -
    a^k_j \partial_{x_i} c^k \, \text{d}x &=0 &&\textrm{for all }
    i,j=1,\ldots,n.\label{bit5}
  \end{align}
  and that
  \begin{equation}
     \mathrm{tr} (a^k) \quad \text{converges pointwisely almost everywhere in } \Omega.
    \label{bit6}
    \end{equation}
  Then, there exists a (non-relabeled) subsequence $(a^k,b^k)$  and a non-decreasing sequence of measurable subsets
  $\Omega_j\subset\Omega$, with $|\Omega \setminus \Omega_j|\to 0$ as $j\to
  \infty$, such that
  \begin{align}
  a^k &\rightharpoonup a &&\textrm{weakly in } L^1(\Omega), \label{bitfa}\\
  s^k &\rightharpoonup s &&\textrm{weakly in } L^1(\Omega), \label{bitfb}\\
  a^k \cdot s^k \omega &\rightharpoonup a \cdot s\, \omega &&\textrm{weakly in } L^1(\Omega_j) \quad \textrm{ for all } j\in \mathbb{N}. \label{bitf}
  \end{align}
\end{theorem}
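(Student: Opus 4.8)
We sketch the proof of this result of \cite{BBS16}; it couples a biting/equi-integrability step with a localized div--curl argument whose admissible test functions are produced by the (solenoidal) Lipschitz truncations.

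\emph{Step 1: weak $L^1$-compactness and biting.} Since $\omega\in A_q$, both $\omega$ and its dual weight $\omega^{1-q'}$ are locally integrable $A_\infty$-weights (Lemma~\ref{weight1}); hence, by the weighted H\"older inequality and absolute continuity of the integral, $\{a^k\}$ and $\{s^k\}$ are equi-integrable in $L^1(\Omega)$, so after passing to a subsequence $a^k\rightharpoonup a$ and $s^k\rightharpoonup s$ weakly in $L^1(\Omega)$ (this is \eqref{bitfa}, \eqref{bitfb}), with in addition $a^k\omega^{1/q}\rightharpoonup a\omega^{1/q}$ in $L^q(\Omega)$ and $s^k\omega^{1/q'}\rightharpoonup s\omega^{1/q'}$ in $L^{q'}(\Omega)$. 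Factoring $\abs{a^k\cdot s^k}\,\omega=(\abs{a^k}\omega^{1/q})(\abs{s^k}\omega^{1/q'})$ shows $\{a^k\cdot s^k\,\omega\}$ is bounded in $L^1(\Omega)$, so Chacon's biting lemma provides, along a further subsequence, a non-decreasing family $\Omega_j\subset\Omega$ with $\abs{\Omega\setminus\Omega_j}\to0$ and $\mu\in L^1(\Omega)$ with $a^k\cdot s^k\,\omega\rightharpoonup\mu$ weakly in $L^1(\Omega_j)$ for all $j$; it remains to identify $\mu=a\cdot s\,\omega$ a.e., which is \eqref{bitf}. Expanding $a^k\cdot s^k=(a^k-a)\cdot(s^k-s)+a\cdot s^k+a^k\cdot s-a\cdot s$, the three linear terms together converge weakly in $L^1(\Omega)$ to $a\cdot s\,\omega$ by the two weighted weak convergences above; since \eqref{bit3}--\eqref{bit6} persist under $(a^k,s^k)\mapsto(a^k-a,s^k-s)$ — the fixed $L^1$-functions $a,s$ being annihilated against $\nabla c^k,\nabla d^k\rightharpoonup^*0$, while $\tr(a^k-a)\to0$ a.e.\ because \eqref{bit6} forces the pointwise limit of $\tr a^k$ to coincide with its weak limit $\tr a$ — we reduce to $a=s=0$, $\tr a^k\to0$ a.e., and must show the (shifted) biting limit vanishes; for this it suffices to prove $\int_\Omega\phi\,a^k\cdot s^k\,\omega\,\mathrm{d}x\to0$ for every $\phi\in C_0^\infty(\Omega)$.

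\emph{Step 2: localized div--curl via truncation.} Fix such a $\phi$ and a ball $B$ with $\supp\phi\subset B\Subset\Omega$, and run the classical div--curl argument on $B$: a weighted Helmholtz/Bogovski\u{\i} splitting of $a^k$ and of $s^k$ into ``gradient'' and ``solenoidal'' parts with zero boundary values (bounded in $L^q_\omega(B)$, resp.\ $L^{q'}_\omega(B)$), together with the associated scalar and vector potentials. Hypothesis \eqref{bit5} is a curl-compactness of $a^k$ tested against $W^{1,\infty}_0$-fields with weak-$*$ null gradient, and \eqref{bit4} a divergence-compactness of $s^k$ tested against \emph{solenoidal} such fields; the two dangerous pairings are therefore killed by taking as test functions the Lipschitz truncation $c^k$ of the (weakly null) scalar potential attached to $a^k$ and the \emph{solenoidal} Lipschitz truncation $d^k$ of the (weakly null) vector potential of $a^k$'s solenoidal part. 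Theorem~\ref{truncation} supplies $d^k\in W^{1,\infty}_{0,\divergence}(\Omega)$ — divergence-free \emph{and} of zero trace — and a standard truncation gives $c^k\in W^{1,\infty}_0(\Omega)$; choosing the level $\lambda=\lambda_k\to\infty$ and using that the truncations coincide with the potentials off sets of measure $\lesssim\lambda^{-q}$ (resp.\ $\lesssim\lambda^{-q'}$) yields $\nabla c^k\rightharpoonup^*0$, $\nabla d^k\rightharpoonup^*0$ in $L^\infty$, so \eqref{bit5}, \eqref{bit4} apply. The errors of these replacements are controlled by: (a) the weighted elliptic estimates, making the curl-compact and div-compact remainders of the Helmholtz splittings tend to $0$ strongly in $L^q_\omega(B)$, resp.\ $L^{q'}_\omega(B)$; (b) the compact embedding $W^{1,1}(B)\hookrightarrow\hookrightarrow L^1(B)$, handling the lower-order terms carrying $\nabla\phi$ and the weight, via strong $L^1$ convergence of the potentials against the weakly convergent $a^k,s^k$; (c) the equi-integrability of Step~1 together with the weighted bound $\int_{\mathcal O}\abs{\nabla(u-u_\lambda)}^p\omega\,\mathrm{d}x\lesssim\int_\Omega\abs{\nabla(u-u_\lambda)}^p\omega\,\mathrm{d}x$ of Theorem~\ref{truncation}, making the truncation errors on the exceptional sets uniformly small. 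Hence $\int_B\phi\,a^k\cdot s^k\,\omega\,\mathrm{d}x\to0$.

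\emph{Step 3: the residual trace component, and conclusion.} In the matrix-valued Helmholtz splitting there remains exactly one scalar component — governed by $\tr a^k$ — on which neither \eqref{bit5} nor \eqref{bit4} produces compactness; in the unweighted $L^1$ setting, unlike $L^p$ with $p>1$, this component can a priori concentrate, and hypothesis \eqref{bit6} is used precisely here: the a.e.\ convergence $\tr a^k\to0$, with the $L^1$-boundedness and Egorov's theorem, excludes concentration on the biting sets $\Omega_j$. Together with Step~2 this gives $\int_\Omega\phi\,a^k\cdot s^k\,\omega\,\mathrm{d}x\to0$ for all $\phi\in C_0^\infty(\Omega)$, hence $\mu=0$ in the reduced setting, i.e.\ \eqref{bitf}. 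The \emph{main obstacle} is Step~2: one must construct, \emph{uniformly in $k$}, the weighted Helmholtz potentials and then their Lipschitz, respectively solenoidal and zero-trace Lipschitz, truncations so that $c^k$ and $d^k$ are simultaneously admissible in \eqref{bit4}--\eqref{bit5} (weak-$*$ null gradients; $d^k$ divergence-free with vanishing trace) while keeping all resulting errors small in the weighted $L^1$-topology — delivering such a $d^k$ is exactly the purpose of the refined truncation of Theorem~\ref{truncation}, with its simultaneous preservation of solenoidality and zero boundary values and its weighted estimate on $\nabla(u-u_\lambda)$ — and the secondary difficulty, the $L^1$-concentration of the trace part, is what forces the extra pointwise hypothesis \eqref{bit6}.
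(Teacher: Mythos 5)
There is no proof of this statement in the paper for your attempt to be compared against: Theorem~\ref{thm:swbdcl} is imported verbatim from \cite[Theorem~1.9]{BBS16} and used as a black box in the existence argument (the paper's own contributions are the truncation Theorem~\ref{truncation} and the a-priori estimates, not this lemma). So the only meaningful review is of your sketch as a reconstruction of the external proof.

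As such a reconstruction it is architecturally sound. Step 1 is correct and essentially forced: $\omega^{1-q'}=\omega^{-q'/q}\in A_{q'}$ gives equi-integrability of $a^k$ and $s^k$ via H\"older, Dunford--Pettis and Chacon's biting lemma give \eqref{bitfa}--\eqref{bitfb} and the biting sets, and the bilinear expansion reduces \eqref{bitf} to the case $a=s=0$ (one small point: identifying the weak $L^q$ limit of $a^k\omega^{1/q}$ with $a\,\omega^{1/q}$ is not immediate from the $L^1$ convergence, since $\phi\,\omega^{1/q}$ is admissible for neither topology; one tests on the sets $\{1/m\le \omega\le m\}$, which exhaust $\Omega$). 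Step 3 correctly locates the role of \eqref{bit6}: the solenoidal test functions of \eqref{bit4} are blind to the trace/pressure component, which in an $L^1$ setting could concentrate. The weak point is Step 2, and it is weak precisely at the one genuinely delicate place. The pairings \eqref{bit4}--\eqref{bit5} are \emph{unweighted}, while the quantity to be controlled is $\int\phi\, a^k\cdot s^k\,\omega\,\mathrm{d}x$; your sketch absorbs ``$\nabla\phi$ and the weight'' into lower-order error terms, but $\omega$ is merely an $A_q$ weight --- not Lipschitz, not even continuous --- so it cannot be differentiated or placed inside the gradient of a test function, and the integration by parts implicit in ``moving the gradient onto $s^k$'' has to be organized so that $\omega$ only ever appears through the weighted stability estimates for the potentials, the Bogovski\u{\i} correction and the truncation (this is exactly why the weighted versions of these operators are needed). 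Relatedly, the assignment of the test functions is slightly inverted: the divergence-free $d^k$ fed into \eqref{bit4} must be manufactured from the (asymptotic) potential of $a^k$ --- produced by consuming \eqref{bit5} first --- corrected by $\mathrm{Bog}$ of its divergence, and it is in that correction that $\tr a^k$ and hence \eqref{bit6} enter; it is not a truncation of ``the vector potential of $a^k$'s solenoidal part.'' These are sketch-level imprecisions rather than refutable errors, but they sit exactly where the proof in \cite{BBS16} does its real work.
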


 %\section{Lipschitz truncations \& Relative truncations }

\section{A-priori estimates}
\subsection{A-priori estimates for \eqref{eq:model}.}
In this section we introduce a-priori bounds of solutions in $L^q$ spaces, with $q$ below the natural duality exponent $p$.  
 Since we follow the approach of~\cite{BS16} in the non-Newtonian setting,  a-priori estimates in $L^q$ spaces are not sufficient to prove existence. Indeed, in order to apply the existence machinery developed in~\cite{BBS16} further  estimates in weighted $L^p$ spaces are necessary:

\begin{theorem}
\label{pro: unif-weight-est} Let  $\Omega \subset \mathbb{R}^{3}$ be a Lipschiz domain which is open, and bounded, $A$ satisfying \eqref{eq:coerc}--\eqref{eq:mon} and $h\in L^{1}(\Omega)$ be an a.e. positive function. Then there exists $\varepsilon_0 >0$  depending on $\Omega$, $C_1$, $C_2$, $C_3$ and $p$ such that for any $\varepsilon \in (0, \varepsilon_0)$ the following holds: If $f\in L^{p^{\prime}}\left(\Omega\right)\cap L_{\left(Mh\right)^{-\varepsilon}}^{p^{\prime}}$ and $u\in W_{0,\mathrm{div}}^{1,p}\left(\Omega\right)$ is a weak solution for~\eqref{eq:model}

then necessarily  $\nabla u\in L_{\left(Mh\right)^{-\varepsilon}}^{p}\left(\Omega\right)$ and moreover
\[\int_{\Omega}\frac{\left|\nabla u\right|^{p}}{\left(Mh\right)^{\varepsilon}}\mathrm{d}x\le c\left(p,\Omega,C_1,C_2,C_3\right)\int_{\Omega}\frac{\left|f\right|^{p^\prime}+C_{1}\left(p,\Omega\right)}{\left(Mh\right)^{\varepsilon}}\mathrm{d}x.\]
\end{theorem}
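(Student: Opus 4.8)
The strategy is the by-now standard route for weighted a-priori bounds for non-Newtonian Stokes systems: pass to the inverse curl of $u$, test the equation with the solenoidal relative truncation on the super-level sets of a maximal function, and then sum the resulting localised energy inequalities against the weight, using that $\varepsilon$ is small. Throughout write $\omega:=(Mh)^{-\varepsilon}$. Since $h\in L^1(\Omega)$ is a.e.\ positive and $\Omega$ is bounded, $Mh$ is bounded below by a positive constant on $\Omega$, so $\omega\in L^\infty(\Omega)$; in particular $\nabla u\in L^p_\omega(\Omega)$ is automatic because $u\in W^{1,p}_{0,\divergence}(\Omega)$, and the only real content is the quantitative bound with a constant that does \emph{not} see $h$. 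By Lemma~\ref{lem:tur2} (with $\alpha=\varepsilon/(p-1)$) we have $\omega\in\mathcal{A}_p$, and, shrinking $\varepsilon_0$ if necessary, its Muckenhoupt constant is controlled by a number $A_0=A_0(p)$ uniformly in $h$ and in $\varepsilon\in(0,\varepsilon_0)$. Next, by Theorem~\ref{thm:weight-curl} there is $w\in W^{2,p}_0(\Omega)$ with $\curl w=u$, $|\nabla u|\le c|\nabla^2 w|$ pointwise, and $\int_\Omega|\nabla^2 w|^p\omega\,\mathrm{d}x\le c(A_0)\int_\Omega|\nabla u|^p\omega\,\mathrm{d}x$ (and the analogous unweighted inequality); extend $w$ by zero and set $g:=\nabla^2 w\,\chi_\Omega$.

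For $\lambda>0$ put $\mathcal{O}_\lambda:=\{M(g)>\lambda\}$ and let $u_\lambda:=\curl(w_{\mathcal{O}_\lambda})\in W^{1,\infty}_{0,\divergence}(\Omega)$ be the solenoidal relative truncation of Theorems~\ref{thm:main3} and~\ref{truncation}; recall $u=u_\lambda$ on $\mathcal{O}_\lambda^c$, $\|\nabla u_\lambda\|_{L^\infty(\Omega)}\le c\lambda$, $|\nabla u|\le c\lambda$ a.e.\ on $\mathcal{O}_\lambda^c$, and $|\mathcal{O}_\lambda|\le c\lambda^{-p}\int_\Omega|\nabla u|^p\,\mathrm{d}x$. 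Since $u-u_\lambda\in W^{1,p}_{0,\divergence}(\Omega)$ is admissible (by the remark after Theorem~\ref{thm:main3}) and vanishes off $\mathcal{O}_\lambda$, testing the weak formulation of~\eqref{eq:model} with it and rearranging gives
\begin{equation*}
\int_{\mathcal{O}_\lambda}A(\cdot,\varepsilon u)\cdot\varepsilon u\,\mathrm{d}x=\int_{\mathcal{O}_\lambda}A(\cdot,\varepsilon u)\cdot\varepsilon u_\lambda\,\mathrm{d}x+\int_{\mathcal{O}_\lambda}f\cdot\nabla(u-u_\lambda)\,\mathrm{d}x.
\end{equation*}
Using coercivity~\eqref{eq:coerc} on the left, boundedness~\eqref{eq:bound} together with $|\varepsilon u_\lambda|\le c\lambda$ on the first term on the right, the bound $|\nabla(u-u_\lambda)|\le|\nabla u|+c\lambda$ on the second, Korn's inequality, and Young's inequality to absorb the $|\varepsilon u|^p$ and $|\nabla u|^p$ contributions with small prefactor, one obtains the localised energy estimate
\begin{equation*}
\int_{\mathcal{O}_\lambda}|\nabla u|^p\,\mathrm{d}x\le c\,\lambda^p|\mathcal{O}_\lambda|+c\int_{\mathcal{O}_\lambda}\bigl(|f|^{p'}+1\bigr)\,\mathrm{d}x,
\end{equation*}
with $c=c(p,\Omega,C_1,C_2,C_3)$. (Testing with $u$ itself gives the global unweighted bound $\int_\Omega|\nabla u|^p\,\mathrm{d}x\le c\int_\Omega(|f|^{p'}+1)\,\mathrm{d}x$, which will be used to start the iteration.)

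It remains to upgrade the localised estimate to the weighted bound. I would decompose $\int_\Omega|\nabla u|^p\omega\,\mathrm{d}x$ along the dyadic levels $\lambda_j=2^j$, $j\in\mathbb{Z}$ — equivalently, use the layer-cake identity $\omega=\varepsilon\int_0^\infty t^{-1-\varepsilon}\mathbf{1}_{\{Mh<t\}}\,\mathrm{d}t$ — so that, by the per-level estimate, the pointwise bound $|\nabla u|\le c\lambda_{j+1}$ on $\mathcal{O}_{\lambda_j}^c$, and a summation by parts, the problem reduces to an inequality of the shape
\begin{equation*}
\int_\Omega|\nabla u|^p\omega\,\mathrm{d}x\le C_0\,\varepsilon\int_\Omega|\nabla u|^p\omega\,\mathrm{d}x+c\int_\Omega\bigl(|f|^{p'}+1\bigr)\omega\,\mathrm{d}x,
\end{equation*}
after which choosing $\varepsilon_0$ so small that $C_0\varepsilon_0<\tfrac12$ absorbs the first term. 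The one non-routine point — and the main obstacle — is the treatment of the error coming from the $\lambda^p|\mathcal{O}_\lambda|$ above, i.e.\ of $\sum_j\lambda_j^p\,\omega(\mathcal{O}_{\lambda_j})$: in the \emph{unweighted} case this sum only converges with a logarithmic loss (so the unweighted bound is really obtained by testing with $u$ directly, not by summation), whereas tested against $\omega=(Mh)^{-\varepsilon}$ the weighted weak-type estimate for $M$ — valid with constant $A_0$ because $\omega\in\mathcal{A}_p$ — together with the $\varepsilon$-weighted summation turns that logarithm into the harmless factor $C_0\varepsilon$. Unwinding the reductions through Theorem~\ref{thm:weight-curl} (and recalling $\omega\in L^\infty(\Omega)$, so the left-hand side is a priori finite) then gives $\int_\Omega|\nabla u|^p\omega\,\mathrm{d}x\le c\int_\Omega(|f|^{p'}+C_1)\omega\,\mathrm{d}x$ with $c=c(p,\Omega,C_1,C_2,C_3)$, which is the assertion. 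I expect the delicate bookkeeping to be precisely in making the logarithmic loss quantitatively an $O(\varepsilon)$ factor while keeping every constant independent of $h$; everything else is the testing and Whitney machinery already set up in the previous sections.
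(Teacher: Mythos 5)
There is a genuine gap, and it sits exactly at the point you yourself flag as ``the one non-routine point.'' You truncate on the super-level sets $\mathcal{O}_\lambda=\{M(\nabla^2 w\,\chi_\Omega)>\lambda\}$ of the solution's own curl potential (the Lipschitz truncation of Theorem~\ref{truncation}), but the weight $\omega=(Mh)^{-\varepsilon}$ is built from an \emph{arbitrary} $h\in L^1$ unrelated to $u$. The paper's proof instead takes the truncation of Theorem~\ref{thm:main3} \emph{relative to the weight}: with $g=h\chi_\Omega+\delta$ it tests with $u_{\mathcal{O}(\lambda)}$ for $\mathcal{O}(\lambda)=\{Mg>\lambda\}$, multiplies the resulting level-wise inequality by $\lambda^{-1-\varepsilon}$ and integrates over $\lambda\in(0,\infty)$. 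Then Fubini turns the good-set terms into $\tfrac1\varepsilon\int(\cdot)(Mg)^{-\varepsilon}$ and the bad-set terms into $\tfrac{1}{\beta-\varepsilon}\int(\cdot)(Mg)^{-\varepsilon}$ with a fixed $\beta>0$ (coming from the auxiliary exponents $\alpha p'/p$, $\alpha p/p'$), and the ratio $\varepsilon/(\overline{(p-1)}-\varepsilon)$ is precisely the small factor that allows absorption of $\int|\nabla u|^p(Mg)^{-\varepsilon}$. This mechanism is only available because the bad sets in the energy inequality coincide with the level sets appearing in the layer-cake representation of the weight. With your choice of sets the two families are unrelated: the layer cake of $\omega$ runs over $\{Mh<t\}$ while your per-level estimate lives on $\{M(\nabla^2w)>\lambda\}$, and the error you must control, $\sum_j\lambda_j^p\,\omega(\mathcal{O}_{\lambda_j})\approx\int_\Omega (M(\nabla^2w))^p\omega\,\mathrm{d}x\gtrsim\int_\Omega|\nabla u|^p\omega\,\mathrm{d}x$, is of the same size as the left-hand side with a constant that does not shrink as $\varepsilon\to0$; no weighted weak-type estimate produces the claimed factor $C_0\varepsilon$. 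Note also that with the correct ($Mg$-relative) sets the truncation is \emph{not} Lipschitz — $\|\nabla u_{\mathcal{O}(\lambda)}\|_\infty\le c\lambda$ fails — and the paper replaces that bound by a Whitney-cube duality argument, inserting $(Mg)^{\pm\alpha}$, using $(Mg)^\alpha\in\mathcal{A}_1$ and $\fint_{5Q_i}(Mg)^\alpha\lesssim\lambda^\alpha$; none of this machinery (which is the heart of the proof) appears in your proposal.

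A secondary flaw: your localized estimate puts $\int_{\mathcal{O}_\lambda}|\nabla u|^p$ on the left by invoking ``Korn's inequality,'' but Korn is not available locally on the bad set with a uniform constant ($u$ does not vanish on $\partial\mathcal{O}_\lambda$, and the sets $\mathcal{O}_\lambda$ are arbitrary open sets). The paper keeps $|\varepsilon u|^p$ on the good set throughout the level-wise estimates and invokes the weighted Korn inequality (Theorem~\ref{thm:kornweight}, legitimate since $(Mg)^{-\varepsilon}\in\mathcal{A}_p$ by Lemma~\ref{lem:tur2}) only after the $\lambda$-integration, globally on $\Omega$. Your preliminary observations (a priori finiteness of $\int_\Omega|\nabla u|^p\omega$ since $Mh$ is bounded below on $\Omega$, playing the role of the paper's $\delta$-regularization and monotone-convergence step; admissibility of $u-u_{\mathcal{O}}$ as a test function) are fine, but the core of the argument — the choice of truncation sets tied to the weight and the $\lambda^{-1-\varepsilon}$-integration that manufactures the absorbable $O(\varepsilon)$ factor — is missing, and the route you sketch in its place does not close.
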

\begin{proof}
We set $g:=h\chi_{\Omega}+\delta$; this function will aproximate $h$ but for $g$ we have that $Mg >\delta$ and so $f,\nabla u\in L_{\left(Mg\right)^{-\varepsilon}}^{p}\left(\Omega\right)$ a priori. This fact will be very important in the end of the proof. 
For any $\lambda >0$ we define $\mathcal{O}\left(\lambda\right):=\left\{ x\in\mathbb{R}^{d};Mg\left(x\right)>\lambda\right\} $. This is an open set because of the sub-linearity of the Maximal operator and if $\mathcal{O}(\lambda)=\mathbb{R}^{d}$ then we set $u_{\mathcal{O}\left(\lambda\right)}=0$. But since $\mathcal{O}(\lambda)$ is a proper open set we may construct the relative truncation $u_{\mathcal{O}\left(\lambda\right)}\in W^{1,p}_{0,\divergence}(\Omega)$ by Theorem~\ref{thm:main3} and use it as a test function. We obtain 
\[
\int_{\Omega}A(\varepsilon u)\cdot\varepsilon u_{\mathcal{O}\left(\lambda\right)}\mathrm{d}x=\int_{\Omega}f\cdot\nabla u_{\mathcal{O}\left(\lambda\right)}\mathrm{d}x
\]
hence by \eqref{eq:coerc} and \eqref{eq:lip2} we find
\[
c\int_{\left\{ Mg\le\lambda\right\} }\left|\varepsilon u\right|^{p}-1\mathrm{d}x\leq \int_{\left\{ Mg\le\lambda\right\} }f\cdot\nabla u\mathrm{d}x+\int_{\left\{ Mg>\lambda\right\} }f\cdot\nabla u_{\mathcal{O}\left(\lambda\right)}\mathrm{d}x-\int_{\left\{ Mg>\lambda\right\} }A(\varepsilon u)\cdot\varepsilon u_{\mathcal{O}\left(\lambda\right)}\mathrm{d}x
\]
 which implies (using the elementary poinwise inequality $\left|\varepsilon u\right|\le\left|\nabla u\right|$ and \eqref{eq:bound}) that
 \begin{equation}\label{testing}
 \int_{\left\{ Mg\le\lambda\right\} }\left|\varepsilon u\right|^{p}\mathrm{d}x
 \le
 c\int_{\left\{ Mg\le\lambda\right\} }\left|f\right|\left|\nabla u\right|+1\mathrm{d}x+c\int_{\left\{ Mg>\lambda\right\} }\left|f\right|\left|\nabla u_{\mathcal{O}\left(\lambda\right)}\right|\mathrm{d}x+c\int_{\left\{ Mg>\lambda\right\} }(\left|\nabla u\right|^{p-1}+1)\left|\nabla u_{\mathcal{O}\left(\lambda\right)}\right|\mathrm{d}x.
 \end{equation}
%%%%
Now let $G\in L^{p^{\prime}}\left(\Omega\right)$. Then by the definition of $u_{\mathcal{O}}$ on $\left\{ Mg>\lambda\right\}$
 \begin{align*}
(I):=\int_{\left\{ Mg >\lambda\right\}}\abs{G}\left|\nabla u_{\mathcal{O}}\right|\mathrm{d}x & \le\sum_{i}\int_{Q_{i}}\left|G\right|\left|\sum_{j\in A_{i}}\nabla\mathrm{curl}\left(\varphi_{j}w_{j}\right)\right|\mathrm{d}x\\
 & \le 2\sum_{i}\sum_{j\in A_{i}}\int_{Q_{i}}\left|G\right|\left|\nabla^{2}\left(\left(w_{j}-w_{i}\right)\varphi_{j}\right)\right|\mathrm{d}x
\end{align*} where on the last inequality we used the partion of unity property. Now for $x\in Q_i$, we find
\begin{align*}
\left|\nabla^{2}\left(\left(w_{j}(x)-w_{i}(x)\right)\varphi_{j}(x)\right)\right| 
& \le
\left|\nabla\left(w_{j}(x)-w_{i}(x)\right)\right|\left|\nabla\varphi_{j}(x)\right|+\left|w_{j}(x)-w_{i}(x)\right|\left|\nabla\varphi_{j}^{2}(x)\right|
\\
 & \le\frac{\left\Vert \nabla\left(w_{j}-w_{i}\right)\right\Vert _{L^{\infty}\left(Q_{i}\cap \frac32Q_j\right)}}{r_{i}}+\frac{\left\Vert w_{j}-w_{i}\right\Vert _{L^{\infty}\left(Q_{i}\cap \frac32Q_j\right)}}{r_{i}^{2}}
\end{align*}
pointwise and if we integrate this on $Q_i$ we have by Lemma~\ref{lem:RT1}, H\"older's inequality and Proposition~\ref{pro:cubes}, (c)
\begin{align*}
 (I)&\le
c\sum_{i}\sum_{j\in A_{i}}\left|Q_{i}\right|\fint_{Q_{i}}|G|\mathrm{d}x  \left(\fint_{\frac{3}{2}Q_{i}}\left|\nabla^{2}w\right|\mathrm{d}x+\fint_{\frac{3}{2}Q_{j}}\left|\nabla^{2}w\right|\mathrm{d}x\right)
\\
&
=c\sum_{i}\sum_{j\in A_{i}}\left|Q_{i}\right|\fint_{Q_{i}}\frac{|G|}{\left(Mg\right)^{\frac{\alpha}{p}}}\left(Mg\right)^{\frac{\alpha}{p}}\mathrm{d}x 
 \left(\fint_{\frac{3}{2}Q_{i}}\frac{\left|\nabla^{2}w\right|}{\left(Mg\right)^{\frac{\alpha}{p^{\prime}}}}\left(Mg\right)^{\frac{\alpha}{p^{\prime}}}\mathrm{d}x+\fint_{\frac{3}{2}Q_{j}}\frac{\left|\nabla^{2}w\right|}{\left(Mg\right)^{\frac{\alpha}{p^{\prime}}}}\left(Mg\right)^{\frac{\alpha}{p^{\prime}}}\mathrm{d}x\right)
\\
&
=c\sum_{i}\sum_{j\in A_{i}}\left|Q_{i}\right|\left(\fint_{Q_{i}}\frac{|G|^{p^{\prime}}}{\left(Mg\right)^{\frac{\alpha p^{\prime}}{p}}}\mathrm{d}x\right)^{\frac{1}{p^{\prime}}} 
 \left(\left(
\fint_{\frac{3}{2}Q_{i}}\frac{\left|\nabla^{2}w\right|^p}{\left(Mg\right)^{\frac{\alpha p}{p^{\prime}}}}\mathrm{d}x\right)^{\frac{1}{p}}
+\left(\fint_{\frac{3}{2}Q_{j}}\frac{\left|\nabla^{2}w\right|^p}{\left(Mg\right)^{\frac{\alpha p}{p^{\prime}}}}\mathrm{d}x\right)^{\frac{1}{p}}\right)
\\
&\quad \times\left(\fint_{5Q_{i}}\left(Mg\right)^{\alpha}\mathrm{d}x\right)
.\end{align*}
We now estimate $\fint_{5Q_{i}}\left(Mg\right)^{\alpha}\mathrm{d}x$. By Lemma ~\ref{weight2} it follows that $\left(Mg\right)^{\alpha}\in\mathcal{A}_{1}$ and hence $M\left(Mg\right)^{\alpha}\le c\left(\alpha\right)\left(Mg\right)^{\alpha}$. Since by the Whitney covering (Proposition~\ref{pro:cubes}, (b)) we find $8Q_{i} \cap \mathcal{O}_{\lambda} ^{c} \neq \emptyset$ it follows that if $x_0$ belongs to this intersection then \[\fint_{9Q_{i}}\left(Mg\right)^{\alpha}\mathrm{d}x\lesssim\left(Mg\right)^{\alpha}\left(x_{0}\right)\le c\left(\alpha\right)\lambda^{\alpha}\] and thus \[
\fint_{5Q_{i}}\left(Mg\right)^{\alpha}\mathrm{d}x\le c\left(\alpha\right)\lambda^{\alpha}
.\]
Observe that by Young's inequality for any $i\in \mathbb{N}$ and any $j \in A_{i}$
\begin{align*}
 & \lambda^{\alpha}\left|Q_{i}\right|\left(\fint_{Q_{i}}\frac{\left|G\right|^{p^{\prime}}}{\left(Mg\right)\frac{\alpha p}{p^{\prime}}}\mathrm{d}x\right)^{\frac{1}{p^{\prime}}}\left(\fint_{\frac{3}{2}Q_{i}}\frac{\left|\nabla^{2}w\right|^{p}}{\left(Mg\right)^{\frac{\alpha p}{p^{\prime}}}}\mathrm{d}x\right)^{\frac{1}{p}}
 \\
&\quad 
\le c\left(\alpha\right)\left|Q_{i}\right|  \left(\fint_{Q_{i}}\frac{\left|G\right|^{p^{\prime}}\lambda^{\frac{\alpha p^{\prime}}{p}}}{\left(Mg\right)\frac{\alpha p^{\prime}}{p}}\mathrm{d}x+\fint_{\frac{3}{2}Q_{i}}\frac{\left|\nabla^{2}w\right|^{p}\lambda^{\frac{\alpha p}{p^\prime}}}{\left(Mg\right)^{\frac{\alpha p}{p^{\prime}}}}\mathrm{d}x\right)
\end{align*} 
which implies by summing over all such $i$ and $j$ that 
\[
\int_{\left\{ Mg>\lambda\right\} }\abs{G}\left|\nabla u_{\mathcal{O}\left(\lambda\right)}\right|\mathrm{d}x\le c\left(\Omega,\alpha\right)\int_{\left\{ Mg>\lambda\right\} \cap\Omega}\frac{\left|G\right|^{p^{\prime}}\lambda^{\frac{\alpha p^{\prime}}{p}}}{\left(Mg\right)^{\frac{\alpha p^{\prime}}{p}}}+\frac{\left|\nabla^{2}w\right|^{p}\lambda^{\frac{\alpha p}{p^{\prime}}}}{\left(Mg\right)^{\frac{\alpha p}{p^{\prime}}}}\mathrm{d}x.
\]
By choosing $G\in\left\{ \left|f\right|+1,\ \left|\nabla u\right|^{p-1}\right\} $ we obtain 
\[\int_{\left\{ Mg>\lambda\right\} }(\left|f\right|+1)\left|\nabla u_{\mathcal{O}\left(\lambda\right)}\right|\mathrm{d}x\le c\left(\Omega,\alpha\right)\int_{\left\{ Mg>\lambda\right\} \cap\Omega}\frac{(1+\left|f\right|^{p^{\prime}})\lambda^{\frac{\alpha p^{\prime}}{p}}}{\left(Mg\right)^{\frac{\alpha p^{\prime}}{p}}}+\frac{\left|\nabla^{2}w\right|^{p}\lambda^{\frac{\alpha p}{p^{\prime}}}}{\left(Mg\right)^{\frac{\alpha p}{p^{\prime}}}}\mathrm{d}x
\]
 and 
\[
\int_{\left\{ Mg>\lambda\right\} }\left|\nabla u\right|^{p-1}\left|\nabla u_{\mathcal{O}\left(\lambda\right)}\right|\mathrm{d}x\le c\left(\Omega,\alpha\right)\int_{\left\{ Mg>\lambda\right\} \cap\Omega}\frac{\left|\nabla u\right|^{p}\lambda^{\frac{\alpha p^{\prime}}{p}}}{\left(Mg\right)^{\frac{\alpha p^{\prime}}{p}}}+\frac{\left|\nabla^{2}w\right|^{p}\lambda^{\frac{\alpha p}{p^{\prime}}}}{\left(Mg\right)^{\frac{\alpha p}{p^{\prime}}}}\mathrm{d}x.\]
If we add them and use (\ref{testing} ) it follows that 
\[
\int_{\left\{ Mg\le\lambda\right\} }\left|\varepsilon u\right|^{p}\mathrm{d}x\le c\int_{\left\{ Mg\le\lambda\right\} }\left|f\right|\left|\nabla u\right|+1\mathrm{d}x+c\left(\Omega,\alpha\right)\int_{\left\{ Mg>\lambda\right\} }\frac{\left(1+\left|f\right|^{p^{\prime}}+\left|\nabla u\right|^{p}\right)\lambda^{\frac{\alpha p^{\prime}}{p}}}{\left(Mg\right)^{\frac{\alpha p^{\prime}}{p}}}+\frac{\left|\nabla^{2}w\right|^{p}\lambda^{\frac{\alpha p}{p^{\prime}}}}{\left(Mg\right)^{\frac{\alpha p}{p^{\prime}}}}\mathrm{d}x.
\]
Let us set $\overline{\left(p-1\right)}:=\min\left(p-1,\ \left(p-1\right)^{-1}\right)$.
We multiply the above inequality by $\lambda^{-1-\varepsilon}$  with $\varepsilon\in\left(0,\ \overline{p-1} \right)$   and integrate  over $\lambda \in  (0, \infty)$ to obtain 
\begin{equation}\label{eps-int} I_{0}\left(\varepsilon\right)\le I_{1}\left(\varepsilon\right)+I_{2}\left(\varepsilon\right)+I_{3}\left(\varepsilon\right)
\end{equation} where 
\begin{align*}
I_{0}\left(\varepsilon\right) & :=\int_{0}^{\infty}\int_{\left\{ Mg\le\lambda\right\} }\left|\varepsilon u\right|^{p}\mathrm{d}x\lambda^{-1-\varepsilon}\mathrm{d}\lambda\\
I_{1}\left(\varepsilon\right) & := c\int_{0}^{\infty}\int_{\left\{ Mg\le\lambda\right\} }\left|f\right|\left|\nabla u\right|+1\mathrm{d}x\lambda^{-1-\varepsilon}\mathrm{d}\lambda\\
I_{2}\left(\varepsilon\right) & :=c\left(\Omega,\alpha\right)\int_{0}^{\infty}\int_{\left\{ Mg>\lambda\right\} }\frac{\left(1+\left|f\right|^{p^{\prime}}+\left|\nabla u\right|^{p}\right)}{\left(Mg\right)^{\frac{\alpha p^{\prime}}{p}}}\mathrm{d}x\lambda^{\frac{\alpha p^{\prime}}{p}-1-\varepsilon}\mathrm{d}\lambda\\
I_{3}\left(\varepsilon\right) & :=c\left(\Omega,\alpha\right)\int_{0}^{\infty}\int_{\left\{ Mg>\lambda\right\} }\frac{\left|\nabla^{2}w\right|^{p}}{\left(Mg\right)^{\frac{\alpha p}{p^{\prime}}}}\lambda^{\frac{\alpha p}{p^{\prime}}-1-\varepsilon}\mathrm{d}x\mathrm{d}\lambda.
\end{align*}
We apply Fubini's theorem several times to obtain 
\begin{align*}
I_{0}\left(\varepsilon\right) & =\int_{\Omega}\int_{Mg\left(x\right)}^{+\infty}\lambda^{-1-\varepsilon}\mathrm{d}\lambda\left|\varepsilon u\right|^{p}\mathrm{d}x\\
 & =\frac{1}{\varepsilon}\int_{\Omega}\frac{\left|\varepsilon u\right|^{p}}{\left(Mg\right)^{\varepsilon}}\mathrm{d}x
\end{align*}
 and 
 \begin{align*}
I_{1}\left(\varepsilon\right) & =\frac{c}{\varepsilon}\int_{\Omega}\frac{1+\left|f\right|\left|\nabla u\right|}{\left(Mg\right)^{\varepsilon}}\mathrm{d}x.
\end{align*}
 Also \[I_{2}\left(\varepsilon\right)	=c\left(\Omega,\alpha\right)\frac{1}{\frac{\alpha p^{\prime}}{p}-\varepsilon}\int_{\Omega}\frac{\left(1+\left|f\right|^{p^{\prime}}+\left|\nabla u\right|^{p}\right)}{\left(Mg\right)^{\varepsilon}}\mathrm{d}x\]
 
 and \[I_{3}\left(\varepsilon\right)	=c\left(\Omega,\alpha\right)\frac{1}{\frac{\alpha p}{p^{\prime}}-\varepsilon}\int_{\Omega}\frac{\left|\nabla^{2}w\right|^{p}}{\left(Mg\right)^{\varepsilon}}\mathrm{d}x.\]
 Therefore (\ref{eps-int}) becomes
 \[
 \int_{\Omega}\frac{\left|\nabla u\right|^{p}}{\left(Mg\right)^{\varepsilon}}\mathrm{d}x
 \le
 c\int_{\Omega}\frac{1+\left|f\right|\left|\nabla u\right|}{\left(Mg\right)^{\varepsilon}}\mathrm{d}x+\frac{\varepsilon c\left(\Omega,\alpha\right)}{\overline{\left(p-1\right)}-\varepsilon}\int_{\Omega}\frac{1+\left|f\right|^{p^{\prime}}+\left|\nabla u\right|^{p}}{\left(Mg\right)^{\varepsilon}}\mathrm{d}x\] after applying the weighted Korn inequality (Theorem~\ref{thm:kornweight}) and the fact that $\varepsilon < \overline{(p-1)}$.
  We point out that $\left(Mg\right)^{-\varepsilon}=\left(Mg\right)^{-\left(p-1\right)\frac{\varepsilon}{p-1}}\in\mathcal{A}_{p}$ by using Lemma ~\ref{lem:tur2} since $\frac{\varepsilon}{p-1}\in\left(0,1\right)$. Further, by applying Young's inequality we obtain 
\[ \int_{\Omega}\frac{\left|\nabla u\right|^{p}}{\left(Mg\right)^{\varepsilon}}\mathrm{d}x\le c\left(p,\varepsilon\right)\int_{\Omega}\frac{1+\left|f\right|^{p^{\prime}}}{\left(Mg\right)^{\varepsilon}}\mathrm{d}x+\frac{\varepsilon c\left(\Omega,\alpha,p\right)}{\overline{\left(p-1\right)}-\varepsilon}\int_{\Omega}\frac{\left|f\right|^{p^{\prime}}+\left|\nabla u\right|^{p}}{\left(Mg\right)^{\varepsilon}}\mathrm{d}x\] and we  notice that the term involving $\left|\nabla u\right|^{p}$ can be absorbed into the left hand side if $\epsilon_0$ is chosen small enough. Actually this is possible, since $\lim\limits_{\varepsilon\to0}\frac{\varepsilon c\left(\Omega,\alpha,p\right)}{\overline{\left(p-1\right)}-\varepsilon}=0$. 
The argument is concluded by taking the limit when $\delta \to 0$ and applying the Monotone Convergence Theorem.

\end{proof}
Please observe that the assumption $u\in W^{1,p}(\Omega)$ is only formal.
\begin{corollary}
\label{cor:1}
Let $p\in (1,\infty)$, $\Omega$ a bounded, open and Lipschitz and $A$ satisfying \eqref{eq:coerc}--\eqref{eq:mon}, then there exists an $\varepsilon >0$  depending on $\Omega$, $C_1$, $C_2$, $C_3$ and $p$ such that if $ q \in [p^{\prime}-\varepsilon, p^{\prime}]$ and $f\in L^{q}\cap L^p\left(\Omega,\mathbb{R}^{3\times3}\right)$ and $\left(u,\pi\right)\in W_{0,\mathrm{div}}^{1,p}\left(\Omega;\mathbb{R}^{3}\right)\times L_{0}^{p}\left(\Omega;\mathbb{R}\right)$ a solution to~\eqref{eq:model}.
Then \[
\int_{\Omega}\left|\nabla u\right|^{q\left(p-1\right)}\mathrm{d}x\le c\left(C_1,C_2,C_3,p,q,\Omega\right)\left(\int_{\Omega}\left|f\right|^{q}\mathrm{d}x+1\right)
\] and 
\[\int_{\Omega}\left|\nabla u\right|^{p}M\left(\left|f\right|+1\right)^{q-p^{\prime}}\mathrm{d}x\le c\left(C_1,C_2,C_3,p,q,\Omega\right)\left(\int_{\Omega}\left|f\right|^{q}\mathrm{d}x+1\right)\]
for some positive constant $c\left(C_1,C_2,C_3,p,q,\Omega\right)>0$ .
\end{corollary}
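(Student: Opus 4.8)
The plan is to deduce both inequalities from the weighted a-priori estimate of Theorem~\ref{pro: unif-weight-est}, by feeding it one single explicit weight, namely $h:=\abs f+1$. Writing $\sigma:=p^{\prime}-q$, the hypothesis $q\in[p^{\prime}-\varepsilon,p^{\prime}]$ (with the constant $\varepsilon$ taken to be the threshold $\varepsilon_0$ of Theorem~\ref{pro: unif-weight-est}, shrunk slightly if needed) puts $\sigma\in[0,\varepsilon_0)$. First I would dispose of the borderline case $q=p^{\prime}$, where $\sigma=0$, $q(p-1)=p$ and the announced weight is identically $1$: there both claims collapse to the classical energy estimate $\int_\Omega\abs{\nabla u}^p\,\mathrm{d}x\le c\big(\int_\Omega\abs f^{p^{\prime}}\,\mathrm{d}x+1\big)$, obtained by testing~\eqref{eq:model} with $u$ and using~\eqref{eq:coerc}, \eqref{eq:bound} and Young's inequality. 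So the substance is the case $q<p^{\prime}$, i.e.\ $\sigma\in(0,\varepsilon_0)$.

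Next I would check that $h=\abs f+1$ is an admissible weight-generator for Theorem~\ref{pro: unif-weight-est} with exponent $\sigma$: it is a.e.\ positive, it lies in $L^1(\Omega)$ since $\Omega$ is bounded and $f\in L^q(\Omega)$, and — using $Mh\ge h=\abs f+1$ a.e.\ in $\Omega$ — one has $(Mh)^{-\sigma}\le(\abs f+1)^{-\sigma}$, so that
\[
\int_\Omega\abs f^{p^{\prime}}(Mh)^{-\sigma}\,\mathrm{d}x\le\int_\Omega(\abs f+1)^{p^{\prime}-\sigma}\,\mathrm{d}x=\int_\Omega(\abs f+1)^{q}\,\mathrm{d}x<\infty,
\]
i.e.\ $f\in L^{p^{\prime}}_{(Mh)^{-\sigma}}(\Omega)$; the remaining standing requirement $f\in L^{p^{\prime}}(\Omega)$ is supplied by $f\in L^p(\Omega)$ when $p\ge2$ and is in any case available in the approximation used for Theorem~\ref{thm:main2}. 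Theorem~\ref{pro: unif-weight-est} then delivers
\[
\int_\Omega\frac{\abs{\nabla u}^p}{(M(\abs f+1))^{\sigma}}\,\mathrm{d}x\le c\int_\Omega\frac{\abs f^{p^{\prime}}+1}{(M(\abs f+1))^{\sigma}}\,\mathrm{d}x,
\]
and since $M(\abs f+1)\ge\abs f+1\ge1$ and $-\sigma=q-p^{\prime}\le0$ the right-hand side is bounded by $c\int_\Omega(\abs f+1)^{p^{\prime}}(\abs f+1)^{q-p^{\prime}}\,\mathrm{d}x+c=c\int_\Omega(\abs f+1)^{q}\,\mathrm{d}x+c\le c\big(\int_\Omega\abs f^q\,\mathrm{d}x+1\big)$. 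Since $(M(\abs f+1))^{\sigma}=M(\abs f+1)^{p^{\prime}-q}$, this is already the second inequality of the corollary.

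For the $L^{q(p-1)}$ bound I would interpolate this weighted estimate against a maximal-function estimate. With $\omega:=(M(\abs f+1))^{-\sigma}$ and since $q(p-1)<p$ (as $q<p^{\prime}$), H\"older's inequality with exponents $\frac{p}{q(p-1)}$ and its conjugate gives
\[
\int_\Omega\abs{\nabla u}^{q(p-1)}\,\mathrm{d}x\le\Big(\int_\Omega\abs{\nabla u}^{p}\,\omega\,\mathrm{d}x\Big)^{\frac{q(p-1)}{p}}\Big(\int_\Omega\omega^{-\frac{q(p-1)}{p-q(p-1)}}\,\mathrm{d}x\Big)^{1-\frac{q(p-1)}{p}}.
\]
The elementary identity $p-q(p-1)=(p^{\prime}-q)(p-1)=\sigma(p-1)$ (from $p^{\prime}(p-1)=p$) makes the exponent of $\omega^{-1}=(M(\abs f+1))^{\sigma}$ equal exactly $q$, so the last factor is $\big(\int_\Omega(M(\abs f+1))^{q}\,\mathrm{d}x\big)^{1-\frac{q(p-1)}{p}}$, which by strong $L^q$-boundedness of $M$ (valid since $q>1$) is $\le c\big(\int_\Omega\abs f^q\,\mathrm{d}x+1\big)^{1-\frac{q(p-1)}{p}}$; the first factor is $\le c\big(\int_\Omega\abs f^q\,\mathrm{d}x+1\big)^{\frac{q(p-1)}{p}}$ by the previous step. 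Multiplying, the two powers of $\int_\Omega\abs f^q\,\mathrm{d}x+1$ add up to $1$ and the first inequality follows.

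I expect the only place needing care to be the exponent bookkeeping in the last step — specifically spotting the identity $p-q(p-1)=(p^{\prime}-q)(p-1)$, which is exactly what makes the $\omega^{-1}$-integral land on the \emph{strong} $(q,q)$ maximal inequality with $q>1$ rather than on a borderline exponent where $M$ is merely of weak type $(1,1)$. Everything else is a direct substitution into Theorem~\ref{pro: unif-weight-est} together with H\"older's inequality and standard maximal function bounds, exactly in the spirit of~\cite{BS16}.
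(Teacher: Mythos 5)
Your proposal is correct and takes essentially the same route as the paper: you invoke Theorem~\ref{pro: unif-weight-est} with $h=\left|f\right|+1$ and $\varepsilon=p^{\prime}-q$, bound the resulting right-hand side by $c\left(\int_{\Omega}\left|f\right|^{q}\mathrm{d}x+1\right)$ via the pointwise bound $Mh\ge h$ and the strong $L^{q}$ maximal estimate, and then derive the $L^{q(p-1)}$ bound by splitting against the weight $M\left(\left|f\right|+1\right)^{q-p^{\prime}}$, exactly as in \eqref{eq:qtoweight} (the paper uses Young's inequality where you use H\"older's, which is the same bookkeeping). Your separate treatment of the endpoint $q=p^{\prime}$ and your remark on where $f\in L^{p^{\prime}}$ comes from are harmless cosmetic additions.
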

\begin{proof}
We apply Theorem ~ \ref{pro: unif-weight-est} for  $\varepsilon=p^{\prime}-q\in\left(0,\varepsilon_{0}\right)$ and  $\omega:=M\left(\left|f\right|+1\right)^{-\varepsilon}$
\begin{align*}
\int_{\Omega}\left|\nabla u\right|^{p}\omega\mathrm{d}x & \le c\int_{\Omega}\left(\left|f\right|^{p^{\prime}}+1\right)\omega\mathrm{d}x\\
 & \le c\int_{\Omega}\left|f\right|^{p^{\prime}}\cdot M\left(f\right)^{-\varepsilon}+c_{1}\cdot1\mathrm{d}x\\
 & \le c\int_{\Omega}M\left(f\right)^{q}+c_{1}\mathrm{d}x\\
 & \le c\int_{\Omega}\left|f\right|^{q}+c_{1}\mathrm{d}x
\end{align*} where the last  inequality makes use of the continuity of the maximal operator.
On the other hand we can apply Young's inequality with $\frac{q}{p^{\prime}}+\frac{\varepsilon}{p^{\prime}}=1$ and the continuity of the maximal operator to obtain 
\begin{align}
\label{eq:qtoweight}
\begin{aligned}
\int_{\Omega}\left|\nabla u\right|^{q\left(p-1\right)}\mathrm{d}x & =\int_{\Omega}\frac{\left|\nabla u\right|^{q\left(p-1\right)}}{M\left(\left|f\right|+1\right)^{\frac{\varepsilon q}{p^{\prime}}}}M\left(\left|f\right|+1\right)^{\frac{p^{\prime}}{\varepsilon q}}\mathrm{d}x\\
 & \le\int_{\Omega}\frac{\left|\nabla u\right|^{p}}{M\left(\left|f\right|+1\right)^{\varepsilon}}\mathrm{d}x+\int_{\Omega}M\left(\left|f\right|+1\right)^{q}\mathrm{d}x\\
 & \le c\int_{\Omega}\left|f\right|^{q}\mathrm{d}x+cc_{2}.
 \end{aligned}
\end{align}
\end{proof}
\subsection{A-priori estimates for \eqref{eq:navier-stokes}--the case $p>2$}
%The next effort is to provide anaologous a-priori estimates for \eqref{eq:navier-stokes}.
\begin{proposition}
\label{pro:p}
Let $p\in (2,\infty)$, $\Omega$ be bounded, open and Lipschitz and $A$ satisfying \eqref{eq:bound}, \eqref{eq:mon} and \eqref{eq:mons}, then there exists an $\varepsilon >0$  depending on $\Omega$ and $p$ such that if $ q \in [p^{\prime}-\varepsilon, p^{\prime}]$ and $f\in L^{q}\cap L^p\left(\Omega,\mathbb{R}^{3\times3}\right)$ and $\left(u,\pi\right)\in W_{0,\mathrm{div}}^{1,p}\left(\Omega;\mathbb{R}^{3}\right)\times L_{0}^{p}\left(\Omega;\mathbb{R}\right)$ a solution to~\eqref{eq:navier-stokes}.
Then \[
\int_{\Omega}\left|\nabla u\right|^{q\left(p-1\right)}+ \left|\nabla u\right|^{p}M\left(\left|f\right|+1\right)^{q-p^{\prime}}\mathrm{d}x
\le c\left(C_1,C_2,C_3,p,q,\Omega\right)\left(\int_{\Omega}\left|f\right|^{q}\mathrm{d}x+1\right)^{\frac{1}{p-2}+\alpha\big(\frac{p'-q}{q}\big)},
\]
for $\alpha$ defined in \eqref{eq:alpha}.
\end{proposition}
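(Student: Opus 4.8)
The idea is to read \eqref{eq:navier-stokes} as the $p$-Stokes system \eqref{eq:model} with a modified force and then to upgrade Corollary~\ref{cor:1}, compensating the convective term either by its favourable scaling (when $p>3$) or by a comparison argument that ``cuts out'' the singular part of $f$ (when $2<p<3$). Testing \eqref{eq:navier-stokes} against solenoidal $\varphi\in W^{1,p}_{0,\mathrm{div}}(\Omega)$ shows that $(u,\pi)$ is a weak solution of \eqref{eq:model} with force $\tilde f:=f+u\otimes u$. Since $p>2$ we have $W^{1,p}_0(\Omega)\hookrightarrow L^{2p'}(\Omega)$, hence $u\otimes u\in L^{p'}(\Omega)$; together with $f\in L^p\subset L^{p'}$ this gives $\tilde f\in L^q\cap L^p(\Omega;\setR^{3\times3})$. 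Moreover \eqref{eq:mons} with $z_2=0$, combined with \eqref{eq:bound} and Young's inequality, yields the coercivity \eqref{eq:coerc}; thus Corollary~\ref{cor:1} applies to $\tilde f$ and, writing $D:=\int_\Omega|f|^q\,\mathrm{d}x+1$ and $Y:=\int_\Omega|u|^{2q}\,\mathrm{d}x$, gives
\[
Z:=\int_\Omega|\nabla u|^{q(p-1)}+|\nabla u|^p M(|f|+1)^{q-p'}\,\mathrm{d}x\ \le\ c\,(D+Y),
\]
where one passes from the weight $M(|\tilde f|+1)^{q-p'}$ to $M(|f|+1)^{q-p'}$ using $|f|+1\le|\tilde f|+|u|^2+1$ and absorbing the $|u|$-term into $Y$.

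Next I would feed $Y$ back into $Z$: for $p>2$ one has $q(p-1)\le p$ and $W^{1,q(p-1)}_0(\Omega)\hookrightarrow L^{2q}(\Omega)$ (valid for $\varepsilon_0$ small since $p>\tfrac95$), so $Y\le c\,\big(\int_\Omega|\nabla u|^{q(p-1)}\,\mathrm{d}x\big)^{2/(p-1)}\le c\,Z^{2/(p-1)}$ and therefore $Z\le c\,D+c\,Z^{2/(p-1)}$. If $p>3$, then $\tfrac2{p-1}<1$ and Young's inequality absorbs the nonlinear term, giving $Z\le c\,D$, i.e.\ the exponent $1=\tfrac1{p-2}+\tfrac{p-3}{p-2}=\tfrac1{p-2}+\alpha\big(\tfrac{p'-q}{q}\big)$ for $\varepsilon_0$ small --- exactly the Stokes bound, as predicted by the remark following Theorem~\ref{thm:p-NAVIER-STOKES}. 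The hard range is $2<p\le3$, where $\tfrac2{p-1}\ge1$ and this feedback no longer closes.

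For $2<p<3$ I would run the comparison argument. Set $t:=D^{2/q}$ and split $f=f\chi_{\{|f|\le t\}}+f\chi_{\{|f|>t\}}=:f_{\mathrm{reg}}+f_{\mathrm{sing}}$, so that $\|f_{\mathrm{reg}}\|_{L^{p'}(\Omega)}^{p'}\le t^{\,p'-q}\!\int_\Omega|f|^q\,\mathrm{d}x\le t^{\,p'-q}D=D^{1+2(p'-q)/q}$, while $|\{|f|>t\}|\le t^{-q}D=D^{-1}$ is small. Let $v\in W^{1,p}_{0,\mathrm{div}}(\Omega)$ solve \eqref{eq:navier-stokes} with datum $f_{\mathrm{reg}}$, which exists in the natural class by monotone-operator theory and compactness of the convective term; testing with $v$ annihilates the convective term, so $\|\nabla v\|_{L^p(\Omega)}^p\le c\,(t^{\,p'-q}D+1)\le c\,D^{1+2(p'-q)/q}$. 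For $w:=u-v$, subtract the two equations and test with $w$: by $\mathrm{div}\,u=\mathrm{div}\,v=0$ and the zero traces the convective contribution reduces (up to sign) to $\int_\Omega((w\cdot\nabla)v)\cdot w\,\mathrm{d}x$, bounded by $\|\nabla w\|_{L^p(\Omega)}^2\|\nabla v\|_{L^p(\Omega)}$, while \eqref{eq:mons} bounds the elliptic part below by $c\|\nabla w\|_{L^p(\Omega)}^p-c$. Splitting $\|\nabla w\|^2\|\nabla v\|$ by Young with exponents $\tfrac p2,\tfrac p{p-2}$ --- the source of the factor $\tfrac1{p-2}$ --- and absorbing, one arrives at
\[
\|\nabla w\|_{L^p(\Omega)}^p\ \le\ c+c\,\|\nabla v\|_{L^p(\Omega)}^{p/(p-2)}+(\text{$f_{\mathrm{sing}}$-term})\ \le\ c\,D^{\frac1{p-2}\left(1+\frac{2(p'-q)}{q}\right)}+(\text{$f_{\mathrm{sing}}$-term}).
\]
The $f_{\mathrm{sing}}$-term is handled by testing the $w$-equation against the solenoidal relative truncation of $w$ from Theorem~\ref{thm:main3}, running the scheme of Theorem~\ref{pro: unif-weight-est} and exploiting the smallness of $|\{|f|>t\}|$ through \eqref{eq:lip4}; it produces only $\lesssim D$ plus truncation-small terms, of lower order. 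Combining, $\|\nabla u\|_{L^p(\Omega)}^p\le c(\|\nabla v\|_{L^p}^p+\|\nabla w\|_{L^p}^p)\le c\,D^{\frac1{p-2}+\alpha((p'-q)/q)}$ (with $\alpha(s)=\tfrac{2s}{p-2}$), and bounding the weighted term of $Z$ by $\|\nabla u\|_{L^p(\Omega)}^p$ (since $M(|f|+1)^{q-p'}\le1$) and the first by $c\|\nabla u\|_{L^p(\Omega)}^{q(p-1)}$ (Hölder, $q(p-1)\le p$) closes the estimate.

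The main obstacle is precisely this last step for $2<p<3$: closing the a-priori bound when the convective scaling is not overwhelmed by the diffusion. The delicate points are that $\|\nabla v\|_{L^p}$ grows with $t$, forcing a precise balance of the cut-off against the smallness harvested from the truncation; that one must never let $\|f\|_{L^{p'}}$, which is not controlled by the data, enter the estimate; and the careful tracking of exponents in the $f_{\mathrm{sing}}$-term, which together with the Young exponent $\tfrac p{p-2}$ pins down the value $\tfrac1{p-2}+\alpha\big(\tfrac{p'-q}{q}\big)$ of \eqref{eq:alpha}. The case $p=3$ sits on the borderline between the two regimes and is reached by a limiting argument.
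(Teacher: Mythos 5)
There is a genuine gap, and it sits exactly at the point you yourself flag as ``the main obstacle''. First, a structural problem already in the easy regime: applying Corollary~\ref{cor:1} with the force $\tilde f=f+u\otimes u$ gives the weighted bound with weight $M(|\tilde f|+1)^{q-p'}$, and your proposed passage to the weight $M(|f|+1)^{q-p'}$ ``using $|f|+1\le|\tilde f|+|u|^2+1$ and absorbing the $|u|$-term into $Y$'' does not work as stated: the weight enters as a negative power of a maximal function, so the inequality $|f|+1\le|\tilde f|+|u|^2+1$ gives a \emph{lower} bound on $M(|f|+1)^{q-p'}$, not an upper bound, and on the set where $M(|u|^2)$ dominates $M(|f|+1)$ the integral $\int|\nabla u|^pM(|f|+1)^{q-p'}\,\mathrm{d}x$ is not controlled by the $\tilde f$-weighted quantity plus $Y$. (For $p>3$ this is repairable by rerunning Theorem~\ref{pro: unif-weight-est} with $h=|f|+1$ but with force $\tilde f$, estimating $\int|u|^{2p'}$ by Sobolev from the $L^{q(p-1)}$-bound; but that is a different argument from the one you wrote.)

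The decisive gap is the range $2<p\le 3$. Your scheme aims at an \emph{unweighted} bound $\|\nabla w\|_{L^p(\Omega)}^p\lesssim D^{\frac1{p-2}(1+\frac{2(p'-q)}{q})}+(\text{$f_{\mathrm{sing}}$-term})$, and then recovers the weighted and $L^{q(p-1)}$ bounds trivially from $M(|f|+1)^{q-p'}\le1$. But such an unweighted $L^p$-gradient bound in terms of $\int_\Omega|f|^q\,\mathrm{d}x$ alone cannot hold: the singular part $f_{\mathrm{sing}}$ is only small in the measure of its support, not in $L^{p'}$, and the component of $u$ it generates is a genuinely very weak one whose full $L^p$ energy depends on $\|f\|_{L^{p'}}$ (this is precisely why the paper works with weighted estimates at all). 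If instead you test the $w$-equation with the relative truncation from Theorem~\ref{thm:main3}, as you propose for the $f_{\mathrm{sing}}$-term, then the skew-symmetry identity $\int_\Omega u\cdot\nabla\tfrac{|w|^2}{2}\,\mathrm{d}x=0$ that you used to tame the convective term is no longer available, and you give no substitute; the sentence ``it produces only $\lesssim D$ plus truncation-small terms'' is exactly the hard interaction between the convective term and the non-dual force, left unproved. The paper's proof avoids the splitting of $f$ entirely for $p>2$: it compares $u$ with the \emph{Stokes} solution $v$ driven by the full $f$, for which Corollary~\ref{cor:1} gives the weighted bound \eqref{eq:vp} with $\omega=M(|f|\chi_\Omega+1)^{q-p'}$, tests the difference equation with $u-v$ (where skew-symmetry is legitimate), and then closes the estimate \emph{in the weighted norm} by bounding $\int_\Omega|u\otimes v|^{p'}\,\mathrm{d}x$ through the weighted Sobolev embedding \eqref{eq:forNS}, where the inverse-weight integral is controlled by $\int_\Omega|f|^q\,\mathrm{d}x$ and Young's inequality produces the exponent $\tfrac1{p-2}+\alpha\big(\tfrac{p'-q}{q}\big)$; your cut of $f$ at level $t=D^{2/q}$ resembles the paper's $p=2$ argument, not its $p>2$ one, and without an analogue of \eqref{eq:forNS} your plan does not close.
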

\begin{proof}
The basic idea is to split the problem into a Stokes part which is contained in $W^{1,p}(\Omega)$ and to estimate the error. 

Next we solve the following auxiliary problem (which exists due to the assumption that $f\in L^{p'}$)

\begin{align}
\label{NS-b1}
\begin{cases}
-\mathrm{div}A(x,\varepsilon v(x))+\nabla\pi_2(x)=-\mathrm{div}\ f(x) & \mathrm{in}\ \ \Omega\\
\mathrm{div}\ v=0 & \mathrm{in}\ \ \Omega\\
v=0 & \mathrm{on}\  \partial\Omega.
\end{cases}
\end{align}
For $q\in (p'-\epsilon,p')$ we consider
\[
\omega=M(\abs{f}\chi_{\Omega}+1)^{q-p'}.
\]
Corollary ~ \ref{cor:1} implies the existence of a $v$ that satisfies
\begin{align}
\label{eq:vp}
\int_{\Omega} \abs{\nabla v}^{p} \omega\,\text{d}x\leq c\int_{\Omega} \abs{f}^{q}+1 \,\text{d}x
\end{align}
%Since by Fubini theorem
%\[
%\norm{f}_{L^q_\omega(\Omega)}^q\sim \int_{l=0}^\infty \omega(\{\abs{f}>l\})l^q\, dl,
%\]
%we find for $\beta>0$ a $k$, such that
%\[
%\norm{b_k}_{L^q_\omega(\Omega)}^q=\int_{k}^\infty \omega(\{\abs{f}>l\})l^q\, dl=: \beta.
%\]
% Hence we may assume in the following that $k$ is large enough such that
%\begin{align}
%\label{eq:beta}
%\int_{\Omega} \abs{\nabla v}^{p} \omega\,\text{d}x\leq c\int_{\Omega} \abs{b_k}^{q} \,\text{d}x\leq c\beta.
%\end{align}
Next we observe, that formally
\begin{align}
\label{eq:differ}
-\mathrm{div}(A(x,\varepsilon u(x))-A(x,\varepsilon v(x)))+\nabla(\pi-\pi_2)(x)=-\mathrm{div}((u(x)\otimes u(x)).
\end{align}
%We introduce the notation of shifted Orlicz functions introduced in~\cite{DieE08}
%\[
%\phi_{p,a}(t)=(a+t)^{p-2}t^2\text{ and its (up to a constant) convex conjugate }\phi_{p',a^{p-1}}(s)=(a^{p-1}+s)^{p'-2}s^2
%\]By testing \eqref{eq:differ} with $u-v$ and using Young's inequality %~\cite[(2.4), (2.5)]{BCDKS17}
%(see also \cite[Section 2]{DieKapSch12})
we find by \eqref{eq:mons} that 
\begin{align*}
\int_{\Omega}\left|\varepsilon u-\varepsilon v\right|^{p}\text{dx} & \lesssim\int_{\Omega}\left(A\left(\varepsilon u\right)-A\left(\varepsilon v\right)\right):\varepsilon\left(u-v\right)\text{d}x+1\\
 & =\int_{\Omega}u\otimes u\cdot \nabla \left(u-v\right)\text{d}x +1\\
% & \le\delta\int_{\Omega}\left|\varepsilon\left(u-v\right)\right|^{p}\text{d}x+C\left(\delta\right)\int_{\Omega}\left|u\otimes u\right|^{p^{\prime}}\text{d}x+C
\end{align*}
%
%
%
%\begin{align*}
%\int_{\Omega} \abs{\varepsilon u-\varepsilon v}^p&\lesssim
%\int_{\Omega}\abs{V(\varepsilon u)-V(\varepsilon v)}^2\, \text{d}x+1
%\\
%&\sim \skp{A(\cdot,\varepsilon u)-A(\cdot,\varepsilon v)}{\varepsilon(u-v)} +1
%\\
%&= \skp{u\otimes u}{\nabla (u-v)} +1
%\end{align*}
This implies by the structure of the convective term, Young's inequality and Korn's inequality (Theorem~\ref{thm:kornweight}) we find for $\delta\in(0,1)$
\begin{align*}
\skp{u\otimes u}{\nabla (u-v)}
&= \skp{u\otimes (u-v)}{\nabla (u-v)}+
\skp{u\otimes v}{\nabla (u-v)} 
\\
&= \int_\Omega u\cdot \nabla \frac{\abs{u-v}^2}{2}\, dx+
\skp{u\otimes v}{\nabla (u-v)} 
\\
&\leq \delta \int_{\Omega}\abs{\varepsilon u-\varepsilon v}^p\, \text{d}x + c_\delta\int_{\Omega} \abs{u\otimes v}^{p'}\, \text{d}x.
\end{align*}
%Analogous including Korn's inequality  Theorem~\ref{thm:kornweight} we find
%\[
%\skp{g_k}{\nabla (u-v)}\leq \delta \int_{\Omega}\abs{\varepsilon u)-\varepsilon v}^p\, \text{d}x+ c_\delta \int_{\Omega} \abs{g_k}^{p'}\, \text{d}x.
%\]
This implies (by absorption) that
%\begin{align*}
%\int_{\Omega}\abs{V(\varepsilon u)-V(\varepsilon v)}^2\, \text{d}x
%&\lesssim \int_{\Omega} (\abs{\epsilon v}^{p-1}+\abs{u}\abs{v}+\abs{g_k})^{p'-2}(\abs{u}\abs{v}+\abs{g_k})^2\, \mathrm{d}x
%\end{align*}
%This implies for $p\geq 2$ that
\begin{align*}
\int_{\Omega}\abs{\varepsilon u-\varepsilon v}^p\, \text{d}x\lesssim \int_{\Omega} \abs{u\otimes v}^{p'}\, \text{d}x.
\end{align*}
Let $\frac{2p'}{p^*}=1-\gamma>0$. 
We now want to apply Theorem \ref{thm:weight-embedding}. For this, we would need to have $\omega^{\left(3-p\right)/3}\in A_{p^{*}/p^{\prime}+1}$. Here we need to notice that the weight $\omega$ is defined via the Hardy-Littlewood maximal function and using Lemma \ref{lem:tur2} it is enough to check if \[\left(q-p^{\prime}\right)\frac{3-p}{3}=-\alpha\frac{p^{*}}{p^{\prime}}\quad\text{for some}\ \alpha\in\left(0,1\right).\]
But by a simple computation we obtain \[\alpha=\left(p^{\prime}-q\right)\frac{3-p}{3}\cdot\frac{p^{\prime}}{p^{*}}=\frac{p^{\prime}-q}{p^{\prime}-1}\in\left(0,1\right).\]
Consequently it follows that 
\begin{equation}\label{eq:embedding}
\left(\int_{\Omega}\left|u\right|^{p^{*}}\omega^{3/(3-p)}\text{d}x\right)^{1/p^{*}}\le c\left(p,A_{p^{*}/p^{\prime}+1}\left(\omega^{\left(3-p\right)/3}\right)\right)\left(\int_{\Omega}\left|\nabla u\right|^{p}\omega \text{d}x\right)^{1/p}.
\end{equation} 
Now we define $b=p$ and so  $b^*=p^*=\frac{3p}{3-p}$ for $p<3$ and $b=2$ and so  $b^*=2^*=6>2p'$ for $p\geq 3$ and $\gamma$ by $\frac{2p'}{b^*}+\gamma=1$. Please observe that this is possible since for $p<3$
\[
\frac{2p'}{p^*}=\frac{2p(3-p)}{3p(p-1)}<1\text{ (which is possible for all $p>\frac{9}{5}$)}.
\] 
By H\"older's inequality and Theorem~\ref{thm:Tur} we obtain \footnote{Please observe that we may assume in case $p>3$ that $p'-q$ is small enough such that $\omega\in A_2$.} 
\begin{align}
\label{eq:forNS}
\begin{aligned}
 \int_{\Omega} \abs{u\otimes v}^{p'} \,\text{d}x&=  \int_{\Omega} \abs{u\otimes v}^{p'} \frac{\omega^\frac{2p'3}{b^*(3-b)}}{\omega^\frac{2p'3}{b^*(3-b)}}\,\text{d}x
 \\
 &\lesssim  \bigg(\int_{\Omega} \abs{u}^{b^*} \omega^\frac{3}{3-b}\,\text{d}x\bigg)^\frac{p'}{b^*}\bigg(\int_{\Omega} \abs{v}^{b^*} \omega^\frac{3}{3-b}\,\text{d}x\bigg)^\frac{p'}{b^*}\bigg(\int_{\Omega} \omega^\frac{-6p'}{(3-b)b^*\gamma}\, \text{d}x\bigg)^{\gamma}
 \\
 &\lesssim  \bigg(\int_{\Omega} \abs{\nabla u}^{b} \omega\,\text{d}x\bigg)^\frac{p'}{b}\bigg(\int_{\Omega} \abs{\nabla v}^{b} \omega\,\text{d}x\bigg)^\frac{p'}{b}\bigg(\int_{\Omega} \omega^\frac{-6p'}{3b\gamma}\, \text{d}x\bigg)^{\gamma}
 \\
 &\lesssim  \bigg(\int_{\Omega} \abs{\nabla u}^{p} \omega\,\text{d}x\bigg)^\frac{p'}{p}\bigg(\int_{\Omega} \abs{\nabla v}^{p} \omega\,\text{d}x\bigg)^\frac{p'}{p}\bigg(\int_{\Omega} \omega^\frac{-2p'}{b\gamma}\, \text{d}x\bigg)^{\gamma}
 \end{aligned}
\end{align}

Since $p>2$ we assume that $p'-q$ is small enough, such that $\frac{2(p'-q)p'}{b\gamma}\leq q$, in which case we find that 
\[
\bigg(\int_{\Omega} \omega^\frac{-2p'}{b\gamma}\, \text{d}x\bigg)^{\gamma}\leq C\bigg(\int_{\Omega} (M(\abs{f}\chi_{\Omega}+1))^\frac{2(p'-q)p'}{b\gamma}\, \text{d}x\bigg)^{\gamma}
%\leq C\bigg(\int_{B_R(0)} (M(\abs{f}\chi_{\Omega}+1))^\frac{(p'-q)p}{2p^*\gamma}\, \text{d}x\bigg)^{-\gamma}
\leq C\bigg(\int_{\Omega} \abs{f}^q+1\text{d}x\bigg)^\frac{2(p'-q)p'}{b q},
\]
with $C$ depending on $p, q$ and $\Omega$ but independent of $f$.
Hence (for $p>2$) we find by~\eqref{eq:vp} and Young's inequality that 
\begin{align*}
\int_{\Omega} \abs{u\otimes v}^{p'} \,\text{d}x&\leq \delta \int_{\Omega} \abs{\nabla u}^{p} \omega\,\text{d}x +c_\delta  \bigg(\int_{\Omega}\abs{f}^q+1\, \text{d}x\bigg)^{\big(\frac{p'}{p}+\frac{2(p'-q)p'}{b q}\big) \frac{p}{p-p'}}
\\
&\leq \delta \int_{\Omega} \abs{\nabla u}^{p} \omega\,\text{d}x +c_\delta   \bigg(\int_{\Omega}\abs{f}^q+1\, \text{d}x\bigg)^{\frac{1}{p-2}+\frac{(p'-q)}{q} \frac{2p}{b(p-2)}}
\end{align*}
And so, by Korn's inequality, Young's inequality Theorem~\ref{thm:kornweight}, \eqref{eq:alpha} and \eqref{eq:vp}
\begin{align*}
\int_{\Omega}\abs{\nabla u}^p\omega\, \text{d}x&\lesssim 
\int_{\Omega}\abs{\varepsilon u-\varepsilon v}^p\, \text{d}x+ \int_{\Omega}\abs{\nabla v}^p\omega\, \text{d}x
\\
&\lesssim \int_{\Omega} \abs{u\otimes v}^{p'}\, \text{d}x+\int_{\Omega}\abs{f}^q+1\, \text{d}x
\\
&\lesssim  \delta \int_{\Omega} \abs{\nabla u}^{p} \omega\,\text{d}x +c_\delta   \bigg(\int_{\Omega}\abs{f}^q+1\, \text{d}x\bigg)^{\frac{1}{p-2}+\alpha((p'-q)/q)}
\end{align*}
This implies the uniform bound by absorption. Finally the $W^{1,q(p-1)}(\Omega)$-bound follows as in \eqref{eq:qtoweight}.
%In case $p<2$, we find 
%\begin{align*}
%\int_{\Omega}\abs{V(\varepsilon u)-V(\varepsilon v)}^2\, \text{d}x
%&\lesssim \int_{\Omega} (\abs{\epsilon v}^{p-1}+\abs{u}\abs{v}+\abs{g_k})^{p'-2}(\abs{u}\abs{v}+\abs{g_k})^2\, \mathrm{d}x
%\\
%&\lesssim \int_{\Omega} (\abs{\epsilon v}^{2-p}(\abs{u}\abs{v}+\abs{g_k})^2+(\abs{u}\abs{v}+\abs{g_k})^{p'}\, \mathrm{d}x
%\\
%&
%\end{align*}
%and so
%
%This estimate can be imitated for an approximation sequence.
\end{proof}
\subsection{A-priori estimates for \eqref{eq:navier-stokes}--the case $p=2$}
\begin{proposition}
\label{pro:2}
Let $p=2$, $\Omega$ a bounded, open and $C^1$ and $A$ satisfying \eqref{eq:mon}--\eqref{(c)} (with $p=2$) and \eqref{eq:uhlenbeck}, then for $ q \in [\frac{12}{7}, 2)$ and $f\in L^{q}\cap L^2\left(\Omega,\mathbb{R}^{3\times3}\right)$ and $\left(u,\pi\right)\in W_{0,\mathrm{div}}^{1,2}\left(\Omega;\mathbb{R}^{3}\right)\times L_{0}^{2}\left(\Omega;\mathbb{R}\right)$ a solution to~\eqref{eq:navier-stokes}.
Then \[
\int_{\Omega}\left|\nabla u\right|^{q}+\left|\nabla u\right|^{2}M\left(\left|f\right|+1\right)^{q-2}\mathrm{d}x\le C
\]
with $C'$ depending on $\norm{f}_{L^q(\Omega)},C_1,C_2,C_3$ and the linear at infinity condition. 
\end{proposition}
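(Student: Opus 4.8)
The plan is to use the linear‑at‑infinity condition \eqref{eq:uhlenbeck} to reduce the leading part of the stress to the \emph{linear} Stokes operator, to split $f$ into a large piece living in the dual space and a small singular piece, and to estimate the Navier--Stokes solution by comparison with a ``background'' solution built from the large piece, the convective term being controlled through the weighted Sobolev embedding of Theorem~\ref{thm:weight-embedding}. Write $A(x,z)=\nu z+B(x,z)$ with $\nu$ the viscosity at infinity; by \eqref{eq:uhlenbeck} together with \eqref{eq:bound} and \eqref{eq:mon} one obtains, for every prescribed $\eta_0>0$, the bounds $\abs{B(x,z)}\le\eta_0\abs z+c(\eta_0)$ and $\abs{B(x,z_1)-B(x,z_2)}\le\eta_0\abs{z_1-z_2}+c(\eta_0)$, the constant terms coming from the bounded range where the arguments are small. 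Since the leading operator is now genuinely linear, the weighted Calder\'on--Zygmund / relative‑truncation machinery behind Proposition~\ref{pro: unif-weight-est} (applied with $A(x,z)=\nu z$ and an arbitrary positive $h\in L^1(\Omega)$, here $h=\abs f\chi_\Omega+1$) is available for it on the $C^1$ domain $\Omega$, with the weight $\omega:=M(\abs f\chi_\Omega+1)^{q-2}\in\mathcal A_2$; this is where the $C^1$ hypothesis enters.

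For the decoupling, fix a small $\eta>0$ (to be chosen at the end only through $C_1,C_2,C_3,\nu,\Omega$) and write $f=f_1+f_2$ with $f_1:=f\chi_{\{\abs f\le K\}}\in L^\infty(\Omega)\subset L^{p'}(\Omega)$ and $\norm{f_2}_{L^q(\Omega)}\le\eta$, which fixes a truncation level $K=K(\eta,f)$. As $f_1$ lies in the natural dual space, \eqref{eq:navier-stokes} with datum $-\divergence f_1$ possesses a weak solution $u_1\in W^{1,2}_{0,\divergence}(\Omega)$ by monotone operator theory and a fixed point argument; testing with $u_1$, where the convective term vanishes by skew symmetry, gives $\norm{\nabla u_1}_{L^2}\le c(\norm{f_1}_{L^2}+1)$, and since then $u_1\otimes u_1\in L^3\cap L^\infty$ one may upgrade this, viewing $u_1$ as a solution of \eqref{eq:model} with datum $f_1+u_1\otimes u_1$ and invoking the a-priori theory for exponents $\ge p'=2$, to higher integrability of $\nabla u_1$; in particular $u_1$ is bounded. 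Subtracting the equations for $u$ and $u_1$ and using $A(\varepsilon u)-A(\varepsilon u_1)=\nu\varepsilon(u-u_1)+B(\varepsilon u)-B(\varepsilon u_1)$, the difference $z:=u-u_1\in W^{1,2}_{0,\divergence}(\Omega)$ solves, modulo the pressure,
\begin{equation*}
-\nu\,\divergence(\varepsilon z)+\nabla\bar\pi=-\divergence\bigl(f_2+u\otimes z+z\otimes u_1-B(\varepsilon u)+B(\varepsilon u_1)\bigr)\quad\text{in }\Omega ,
\end{equation*}
with $z=0$ on $\partial\Omega$, $\divergence z=0$; note that $\int_\Omega(u\otimes z):\nabla z=0$ and $\int_\Omega(z\otimes u_1):\nabla z=-\int_\Omega(z\otimes z):\nabla u_1$ by $\divergence u=\divergence z=0$ and $z\rvert_{\partial\Omega}=0$, so the convective part of the right‑hand side, paired back with the solution, is effectively quadratic in $z$ with the fixed, bounded coefficient $\nabla u_1$.

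With this in hand I would run the weighted estimate for $z$. The small piece contributes $\norm{f_2}_{L^2_\omega}^2\lesssim\norm{f_2}_{L^q}^q\lesssim\eta^q$, since $\omega\le(\abs{f_2}+1)^{q-2}$ pointwise; the error $B(\varepsilon u)-B(\varepsilon u_1)$ contributes $\eta_0^2\int_\Omega\abs{\nabla z}^2\omega\,\mathrm d x+c(\eta_0)$, the first part absorbed by choosing $\eta_0$ small (weighted Korn, Theorem~\ref{thm:kornweight}); and the convective contribution is controlled by the weighted Sobolev inequality $\norm{z}_{L^6_{\omega^3}}\lesssim\norm{\nabla z}_{L^2_\omega}$ of Theorem~\ref{thm:weight-embedding} (legitimate since $\omega^3=M(\abs f+1)^{3(q-2)}\in\mathcal A_4$ by Lemma~\ref{lem:tur2}), Hölder's inequality, the weighted Poincar\'e inequality \eqref{eq:weightponc}, and the boundedness of $u_1$. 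In the course of this one meets negative powers of $\omega$, i.e.\ integrals $\int_\Omega M(\abs f+1)^{s(2-q)}\,\mathrm d x$, which are finite precisely when $s(2-q)<q$ for the relevant $s$, and \emph{this is exactly what forces $q\ge\tfrac{12}{7}$} --- equivalently, it is the borderline case $W^{1,q}(\Omega)\hookrightarrow L^4(\Omega)$ that makes the convective term compatible with the dual space. The resulting inequality is quadratic in $X:=\bigl(\int_\Omega\abs{\nabla z}^2\omega\bigr)^{1/2}$, which is a priori finite because $u,u_1\in W^{1,2}$; using the smallness of $\eta$ (and, if needed, a continuity argument in the size of $f_2$) one selects the small branch and gets $\int_\Omega\abs{\nabla z}^2\omega\le C$, whence $\int_\Omega\abs{\nabla u}^2\omega\le C$ from $\nabla u=\nabla u_1+\nabla z$. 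The bound for $\int_\Omega\abs{\nabla u}^q\,\mathrm d x$ then follows from the weighted one by Young's inequality exactly as in \eqref{eq:qtoweight}.

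The heart of the matter, and the main obstacle, is the critical scaling of the convective term at $p=2$: unlike in Proposition~\ref{pro:p}, $u\otimes u$ cannot be absorbed into the diffusive term for arbitrarily large data, and even after the comparison the interaction term $\int_\Omega\abs z^2\abs{\nabla u_1}\,\mathrm d x$ still carries the large background $u_1$. The role of the decoupling is precisely to overcome this: the skew‑symmetric energy method applies to the large, dual‑space part $f_1$ and produces the fixed, regular background $u_1$, while the smallness of the residual $f_2$ controls how far the Navier--Stokes solution sits from the Stokes/background solution; the delicate point is the order in which $\eta$, $\eta_0$, the size of $u_1$, and $\int_\Omega\omega^{-s}\,\mathrm d x$ are played off against one another so that the quadratic estimate genuinely closes --- this is also why the final constant is not expected to depend linearly on $\norm f_{L^q}$.
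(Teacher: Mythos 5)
Your overall strategy (use \eqref{eq:uhlenbeck} to reduce to the linear Stokes operator, split $f$ into a dual-space part and a small singular part, compare with an auxiliary solution, and control the convective term through the weighted embedding of Theorem~\ref{thm:weight-embedding}, with $q\ge\frac{12}{7}$ coming from the integrability of a negative power of the weight) matches the paper in spirit, but the roles you assign to the two pieces of $f$ are inverted, and this breaks the mechanism that closes the estimate. In the paper the auxiliary problem is a \emph{Stokes} problem (no convective term, with a modified tensor $\tilde A$ that is strongly monotone) whose datum is the \emph{small singular} part $b_k=f\chi_{\{|f|>k\}}$; the weighted theory of \cite[Lemma~3.2]{BBS16} then makes the background $v$ \emph{small}: $\int_\Omega|\nabla v|^2\omega\,\mathrm{d}x\le c\beta$. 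The bounded part $g_k$ is kept in the difference equation, which is tested with $u-v$ itself, so skew symmetry is legitimately available and the surviving convective contribution is $\int_\Omega|u\otimes v|^2\,\mathrm{d}x$, which via \eqref{eq:forNS} carries the small factor $\beta$ in front of $\int_\Omega|\nabla u|^2\omega\,\mathrm{d}x$ and is absorbed; the final inequality is \emph{linear} in the unknown quantity with a small coefficient. In your arrangement the singular part $f_2$ sits in the equation for $z=u-u_1$. You then need simultaneously (i) the energy test with $z$ (to use $\int(u\otimes z):\nabla z=0$ and move $\nabla u_1$ onto $z\otimes z$) and (ii) a weighted very-weak estimate (to control the $f_2$-term uniformly in $\|f\|_{L^q}$ only, since $\int f_2\cdot\nabla z$ is not controlled by $\|f_2\|_{L^q}\|\nabla z\|_{L^2}$). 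These are incompatible: the weighted estimate of Proposition~\ref{pro: unif-weight-est} is obtained by testing with relative truncations $z_{\mathcal O(\lambda)}$, not with $z$, and for such test functions the skew-symmetry identities you invoke are no longer available, so the convective terms $u\otimes z$ and $z\otimes u_1$ must be estimated brutally, with the full-size factor $u$ and no small parameter.

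Even setting that aside, the interaction term you end up with, $\int_\Omega|z|^2|\nabla u_1|\,\mathrm{d}x$, carries the \emph{large} background $u_1$ (its size depends on the truncation level $K$, hence on $f$, not on $\eta$), so the resulting inequality for $X=\bigl(\int_\Omega|\nabla z|^2\omega\,\mathrm{d}x\bigr)^{1/2}$ is quadratic with a large coefficient, of the schematic form $X^2\lesssim\eta^q+\Lambda X^2$ with $\Lambda$ not small. Your proposed fix --- ``select the small branch'' or run a continuity argument in the size of $f_2$ --- is not available here: $(u,\pi)$ is a \emph{given, fixed} solution (possibly large), there is no uniqueness, and no family of solutions connecting it continuously to the background, so one cannot rule out that $X$ lies on the large branch. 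This is exactly the obstruction the paper's decoupling is designed to avoid: by making the \emph{background} (not the difference) the small object, the dangerous product $u\otimes v$ always contains one small factor and absorption works without any branch selection. A secondary gap: your claim that $u_1$ is bounded (needed to make sense of the coefficient $\nabla u_1$ in your quadratic term) rests on an unproved bootstrap for the nonlinear Navier--Stokes problem with datum $f_1\in L^\infty$ under only \eqref{eq:coerc}--\eqref{eq:mon} and \eqref{eq:uhlenbeck} on a $C^1$ domain; the paper never needs any such regularity of its auxiliary solution, only the smallness of its weighted Dirichlet energy.
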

\begin{proof}
The basic idea is to split the problem into a large part which is contained in $W^{1,p}(\Omega)$ and a small very-weak part. Let $\tilde{\delta}\in (0,\frac{\nu}{4})$. By the assumption \eqref{eq:uhlenbeck} there is a $K>2$, such that 
\begin{align*}
\abs{\nu z- A(x,z)}+\abs{\Ibb\nu-D_zA(x,z)}\leq \tilde{\delta}\text{ for all }\abs{z}\geq K\text{ and all }x\in \Omega. 
\end{align*}
We define $\phi\in C^2([0,\infty),[0,1])$, such that $\chi_{[K/2,\infty)}\leq \phi \leq \chi_{[K,\infty)})$ and $\phi'\leq 1$
\begin{align*}
\tilde{A}(x, z)= \nu z+ \phi(\abs{z})(A(x,z)-\nu z)
\end{align*}
Please observe that $\tilde{A}$ satisfies \eqref{eq:bound} (with $p=2$) and \eqref{eq:uhlenbeck}. Moreover it satisfies \eqref{eq:mons} for $p=2$ and $C_3=0$, since in case $\abs{z_1}\geq \abs{z_2}$
\begin{align*}
(\tilde{A}(x,z_1)-\tilde{A}(x,z_2))\cdot(z_1-z_2)&= \nu\abs{z_1-z_2}^2 +\big((\phi(\abs{z_1})(A(x,z_1)-\nu z_1)-\phi(\abs{z_2})(A(x,z_2)-\nu z_2)\big)\cdot (z_1-z_2)
\\
&= \nu\abs{z_1-z_2}^2  + \phi(\abs{z_2}) \big(A(x,z_1)-\nu z_1-A(x,z_2)+\nu z_2\Big)\cdot(z_1-z_2) 
\\
&\quad + \big(\phi(\abs{z_1})-\phi(\abs{z_2})\big)(A(x,z_1)-\nu z_1)\cdot(z_1-z_2)
\\
&=:\nu\abs{z_1-z_2}^2+(I)+(II)
\end{align*}
Due to the support of $\phi$ and the fact that $\varphi\left(\left|u\right|\right)-\varphi\left(\left|v\right|\right)\le1$ for all $u,v  \in \mathbb{R}$ , we find that
\begin{align*}
\left|\left(II\right)\right| & \le\tilde{\delta}\left|\varphi\left(\left|z_{1}\right|\right)-\varphi\left(\left|z_{2}\right|\right)\right|\left|z_{1}-z_{2}\right|\\
 & \le\frac{\nu}{4}\left|z_{1}-z_{2}\right|^{2}
\end{align*}

Similarly we find 
\begin{align*}
\abs{(I)}&=\phi(\abs{z_2})\absBB{\sum_{i,j=1}^{3}\int_{z_1^{ij}}^{z_2^{ij}}\partial_{z^{ij}}A(x,\xi)-\delta_{ij}\nu\,d \xi (z_1^{ij}-z_2^{ij})}
\leq \tilde{\delta}\phi(\abs{z_2})\abs{z_1-z_2}^2\leq \frac{\nu}{4}\abs{z_1-z_2}^2
%\Bigabs{ A(x,\xi)
\end{align*}
And so
\begin{align*}
(\tilde{A}(x,z_1)-\tilde{A}(x,z_2))\cdot(z_1-z_2)\geq \nu\abs{z_1-z_2}^2-\abs{(I)}-\abs{(II)}\geq \frac{\nu}{2}\abs{z_1-z_2}^2.
\end{align*}
We split 
\[
f=g_k+b_k:=f\chi_{\abs{f}\leq k}+f\chi_{\abs{f}>k}.
\]
Next we solve the following auxiliary Stokes problem: 

\begin{align}
\label{NS-b}
\begin{cases}
-\divergence (\tilde{A}(\varepsilon v))+\nabla\pi_2(x)=-\mathrm{div}\ b_k(x) & \mathrm{in}\ \ \Omega\\
\mathrm{div}\ v=0 & \mathrm{in}\ \ \Omega\\
v=0 & \mathrm{on}\  \partial\Omega.
\end{cases}
\end{align}
Since $f\in L^2(\Omega)$ the existence follows by monotone operator theory.

Moreover, we find that in $\Omega$ (as $\Delta v=2\divergence(\epsilon v)$)
\begin{align}
\label{NS-new}
-\frac{\nu}{2}\Delta v+\nabla\pi_2(x)=-\mathrm{div}(b_k(x)+\nu \epsilon v-\tilde{A}(x,\varepsilon v))=-\mathrm{div}\big( b_k(x)+\phi(\abs{\varepsilon v})(A(x,\varepsilon v)-\nu \varepsilon v)\big)
\end{align}
We consider
\[
\omega=M(\abs{f}\chi_{\Omega}+1)^{q-2}\in A_2\text{ as }q-2\in (-1,0).
\]
Now \cite[Lemma~3.2]{BBS16} and \eqref{eq:uhlenbeck} in combination with the support of $\phi$ implies that
\begin{align*}
\int_\Omega \abs{\nabla v}^2\omega \text{d}x &\leq c\int_\Omega \abs{b_k}^q\, \text{d}x
+c\int_\Omega \abs{\phi(\abs{\varepsilon v})(A(x,\varepsilon v)-\nu \varepsilon v)}^2\omega \text{d}x
\\
&\leq  c\int_\Omega \abs{b_k}^q\, \text{d}x
+c\tilde{\delta} \int_\Omega \abs{\varepsilon v}^2\omega \text{d}x
\end{align*}
using that $\supp(\phi)\subset [K,\infty)$. Choosing $\tilde{\delta}$ small enough implies
\[
\int_\Omega \abs{\nabla v}^2\omega \text{d}x\leq c\int_\Omega \abs{b_k}^q\, \text{d}x.
\]
By Fubini theorem
\[
\norm{f}_{L^q_\omega(\Omega)}^q\sim \int_{l=0}^\infty \omega(\{\abs{f}>l\})l^q\, dl,
\]
which implies that for every $\beta>0$ there exists a $k$, such that
\[
\norm{b_k}_{L^q_\omega(\Omega)}^q=\int_{k}^\infty \omega(\{\abs{f}>l\})l^q\, dl= \beta.
\]
 Hence we find
\begin{align}
\label{eq:beta}
\int_{\Omega} \abs{\nabla v}^{2} \omega\,\text{d}x\leq c\int_{\Omega} \abs{b_k}^{q} \,\text{d}x\leq c\beta.
\end{align}
Next we observe, that  that
\begin{align}
\label{eq:differ1}
-\frac{\nu}{2}\Delta(u-v)+\nabla(\pi-\pi_2)(x)=-\mathrm{div}(\nu (\varepsilon (u-v)) -(A(\cdot, \varepsilon u)-\tilde{A}(\cdot, \varepsilon v))+(u(x)\otimes u(x))+g_k(x)).
\end{align}

By testing \eqref{eq:differ1} with $u-v$ and using Young's inequality

we find for $\delta>0$ the estimate (using $p\geq 2$)
\begin{align}
\label{eq:key}
\frac{\nu}{2}\int_{\Omega} \abs{\nabla u-\nabla v}^2\, \text{d}x
&= \skp{u\otimes u}{\nabla (u-v)} + \skp{\nu (\varepsilon (u-v)) -(A(\cdot, \varepsilon u)-\tilde{A}(\cdot, \varepsilon v))+g_k }{\nabla(u-v)}
\end{align}
This implies by the structure of the convective term and the symmetry of the convective term that
\begin{align*}
\skp{u\otimes u}{\nabla (u-v)}
&= \skp{u\otimes v}{\nabla (u-v)} 
\\
&\leq \delta \int_{\Omega}\abs{\nabla u-\nabla v}^2\, \text{d}x + c_\delta\int_{\Omega} \abs{u\otimes v}^{2}\, \text{d}x
\end{align*}
We estimate further
\[
\skp{g_k}{\nabla (u-v)}\leq \delta \int_{\Omega}\abs{\nabla (u-v)}^2\, \text{d}x+ c_\delta \int_{\Omega} \abs{g_k}^{2}\, \text{d}x.
\]
And finally
\begin{align*}
\skp{\nu (\varepsilon (u-v)) -(A(\cdot, \varepsilon u)-\tilde{A}(\cdot, \varepsilon v))}{\nabla (u-v)}&\leq \delta \int_{\Omega}\abs{\nabla (u-v)}^2\, \text{d}x
\\
&+ c_\delta \int_{\Omega} \abs{\nu (\varepsilon (u-v)) -(A(\cdot, \varepsilon u)-\tilde{A}(\cdot, \varepsilon v))}^{2}\, \text{d}x
%\\
%&\leq \delta \int_{\Omega}\abs{\nabla (u-v)}^2\, \text{d}x+ c_\delta \tilde{\delta}\int_{\Omega} \int_{\Omega} \abs{\nu \epsilon u-A(\cdot, \epsilon u)}^{2}\, \text{d}x.
\end{align*}
But now
\begin{align*}
\abs{\nu (\varepsilon (u-v)) -(A(\cdot, \varepsilon u)-\tilde{A}(\cdot, \varepsilon v))}
&\leq cK + \abs{ A(\cdot,\varepsilon u)-\nu \varepsilon u }\chi_{\{\abs{\varepsilon v}\leq 2K\}}\chi_{\{\abs{\varepsilon u}\geq 4K\}}
\\
&\quad  + \abs{ A(\cdot,\varepsilon u)-A(\cdot, \varepsilon v)-\nu\varepsilon (u-v)}\chi_{\{\abs{\varepsilon v}\geq 2K\}}\chi_{\{\abs{\varepsilon u}\geq 4K\}}
\\
&\leq cK 
+\tilde{\delta}\abs{\varepsilon u}\chi_{\{\abs{\varepsilon v}\leq 2K\}}\chi_{\{\abs{\varepsilon u}\geq 4K\}}
\\
& \quad + \absBB{\sum_{i,j}^3 \int_{(\varepsilon u)^{i,j}}^{(\varepsilon v)^{i,j}}\partial_{z^{ij}} A(\cdot,\xi)-\delta_{ij}\nu\, d\xi}\chi_{\{\abs{\varepsilon v}\geq 2K\}}\chi_{\{\abs{\varepsilon u}\geq 4K\}}
\\
&\leq cK 
+2\tilde{\delta}\abs{\varepsilon (u-v)}
+\sum_{i,j}^3 \int_{[(\varepsilon u)^{i,j},(\varepsilon v)^{i,j}]}\tilde{\delta}  d\xi
\\
&\leq cK + 3\tilde{\delta}\abs{\varepsilon (u-v)}.
\end{align*}
This implies (by choosing $\delta=\frac{1}{6}$ and $\tilde{\delta}\leq \frac{\delta}{3c_\delta}$ and absorption) that
%\begin{align*}
%\int_{\Omega}\abs{V(\varepsilon u)-V(\varepsilon v)}^2\, \text{d}x
%&\lesssim \int_{\Omega} (\abs{\epsilon v}^{p-1}+\abs{u}\abs{v}+\abs{g_k})^{p'-2}(\abs{u}\abs{v}+\abs{g_k})^2\, \mathrm{d}x
%\end{align*}
%This implies for $p\geq 2$ that
\begin{align*}
\int_{\Omega}\abs{\varepsilon u-\varepsilon v}^2\, \text{d}x\lesssim \int_{\Omega} \abs{u\otimes v}^{2}+{g_k}^2\, \text{d}x+K\leq  \int_{\Omega} \abs{u\otimes v}^{2}+k^{2-q}{f}^q\, \text{d}x+K.
\end{align*}
In order to estimate the convective term we use \eqref{eq:forNS} and take $p=2$, $p^*=6$ and $\gamma=\frac13$, which implies that
\[
\frac{-2p'}{p\gamma}=\frac{-2}{\frac13}=-6
\]
and so
\begin{align*}
\begin{aligned}
 \int_{\Omega} \abs{u\otimes v}^{2} \,\text{d}x&\lesssim  \int_{\Omega} \abs{\nabla u}^{2} \omega\,\text{d}x\int_{\Omega} \abs{\nabla v}^{p} \omega\,\text{d}x\bigg(\int_{\Omega} \omega^{-6}\, \text{d}x\bigg)^\frac1{3}
 \end{aligned}
\end{align*}
Since
\[
\omega^{-6}=
M(\abs{f}\chi_\Omega+1)^{(2-q)6},
\]
and
\[
(2-q)6\leq q\text{ by the assumption that }q\in [\frac{12}{7},2],
\]
 we find that
\begin{align*}
\begin{aligned}
 \int_{\Omega} \abs{u\otimes v}^{2} \,\text{d}x&\lesssim \beta\bigg(\int_{\Omega} \abs{f}^q+1\, \text{d}x\bigg)^\frac1{3} \int_{\Omega} \abs{\nabla u}^{2} \omega\,\text{d}x.
 \end{aligned}
\end{align*}
Hence we can estimate
\begin{align*}
\int_{\Omega}\abs{\nabla u}^2\omega\, \text{d}x&\lesssim 
\int_{\Omega}\abs{\nabla( u-  v}^2\, \text{d}x+ \int_{\Omega}\abs{\nabla v}^2\omega\, \text{d}x
\\
&\lesssim \int_{\Omega} \abs{u\otimes v}^{2}+k^{2-q}\abs{f}^q\, \text{d}x+\beta +K
\\
&\lesssim  \int_{\Omega} \abs{\nabla u}^{2} \omega\,\text{d}x\beta\bigg(\int_{\Omega} \abs{f}^q+1\, \text{d}x\bigg)^\frac1{3} + C(\norm{f}_{L^q(\Omega)})
\end{align*}
By choosing $\beta$ small enough (meaning $k$ large enough) we can absorb and find
\begin{align*}
\int_{\Omega}\abs{\nabla u}^p\omega\, \text{d}x&\leq C(\norm{f}_{L^q(\Omega)}),
\end{align*}
and so (using H\"older's inequality as in Corollary~\ref{cor:1}) we find
\begin{align*}
\int_{\Omega}\abs{\nabla u}^q\, \text{d}x&\leq C(\norm{f}_{L^q(\Omega)}).
\end{align*}
%Finally the $W^{1,q(p-1)}(\Omega)$-bound follows as in \eqref{eq:qtoweight}.

%
%Since the estimate can be achieved uniformly for $f_k$ defined as in the proof of Theorem~\ref{thm:NAVIER-STOKES}, we find the existence of a solution by weak compactness in $W^{1,q}(\Omega)$ and the compact embedding of $ W^{1,q}(\Omega)$ into $L^2(\Omega)$. The pressure estimate follows as in the proof of Theorem~\ref{thm:NAVIER-STOKES}. 
\end{proof}

\section{Existence of very-weak solutions}
\noindent
In this section we prove that there exists a very weak solution to our problem. Actually, we will prove  Theorem~\ref{thm:main2}, Theorem~\ref{thm:p-NAVIER-STOKES} and Theorem~\ref{thm:main3} simultaneously since once the a-priori estimates are established the limit passage procedure  is the same. The proof is achieved in the following three subsections.

\subsection{The approximating system}
The existence follows for all three cases in the same way. The idea will be to consider a sequence of approximate solutions and then pass to the limit accordingly. Let us consider $f\in L^{q}\left(\Omega\right)$ and $f_{k}:=\min\left\{ k,\left|f\right|\right\} f/\left|f\right|$. Then $f_{k}\in L^{\infty}\left(\Omega\right)\cap L^{q}\left(\Omega\right)$ with $\left|f_{k}\right|\nearrow\left|f\right|$ and (via Monotone Convergence) \begin{equation}\label{eq:mon-conv}
f_{k}\to f\ \mathrm{in}\ L^{q}\left(\Omega\right).
\end{equation}

Further we define 
Now, for each $k$, there exists $(u_k,\pi_k)\in W_{0,\mathrm{div}}^{1,p}\left(\Omega\right)\times L^p_0(\Omega)$ which is a solution to \eqref{eq:navier-stokes}  or \eqref{eq:model} with right hand side $f_k$.
%such that
%\begin{equation}\label{eq:model}
%\begin{cases}
%-\mathrm{div}A(x,\varepsilon u_k(x))+\nabla\pi=-\mathrm{div}\ f & \mathrm{in}\ \ \Omega\\
%\mathrm{div}\ u=0 & \mathrm{in}\ \ \Omega\\
%u=0 & \mathrm{on}\  \partial\Omega.
%\end{cases}
%\end{equation}
This is due to the fact that $f_{k}\in L^{\infty}\left(\Omega\right)\subset L^{p^{\prime}}\left(\Omega\right)$.  The proof of this last fact is classical. 
 We set $\varepsilon=p^{\prime}-q\in\left(0,\varepsilon_{0}\right)$ and  $\omega:=M\left(\left|f\right|+1\right)^{-\varepsilon}$. Then these solutions satisfy uniform estimates by applying Corollary~\ref{cor:1}, Proposition~\ref{pro:p} or Proposition~\ref{pro:p}:
\begin{align*}
\int_{\Omega}\left|\nabla u_{k}\right|^{p}\omega\mathrm{d}x +\int_{\Omega}\left|\nabla u_{k}\right|^{q\left(p-1\right)}\mathrm{d}x \leq C
\end{align*}
 Using these estimates and the reflexivity of the spaces 
\begin{align}
u_{k}\rightharpoonup u &  & \mathrm{weakly}\ \mathrm{in}\ W_{0,\mathrm{div}}^{1,q\left(p-1\right)}\left(\Omega\right) \label{eq:weakconv1}\\
\nabla u_{k}\rightharpoonup\nabla u &  & \mathrm{weakly}\ \mathrm{in}\ L_{\omega}^{p}\left(\Omega\right)\cap L^{q\left(p-1\right)}\left(\Omega\right)\label{eq:weakconv2}\\
A(\cdot,\varepsilon u_{k})\rightharpoonup\overline{S} &  & \mathrm{weakly}\ \mathrm{in}\ L_{\omega}^{p^{\prime}}\left(\Omega\right)\cap L^{q}\left(\Omega\right)\label{eq:weakconv3}.
\end{align}
%Also, by summing up the above estimates we obtain \[ \int_{\Omega}\left|\nabla u_{k}\right|^{p}\omega\mathrm{d}x+\int_{\Omega}\left|\nabla u_{k}\right|^{q\left(p-1\right)}\mathrm{d}x\le c\int_{\Omega}\left|f\right|^{q}\mathrm{d}x+c\]
and if we pass to the limit as $k \to \infty$ %and use $(\ref{eq:weakconv1})$ and $(\ref{eq:weakconv2})$ 
we end up with the following a priori estimate:
 \begin{equation}\label{eq:a-priori}
\int_{\Omega}\left|\nabla u\right|^{p}\abs{\overline{A}}^{p'}\omega\mathrm{d}x+\int_{\Omega}\left|\nabla u\right|^{q\left(p-1\right)} +\abs{\overline{A}}^{q}\mathrm{d}x\le C.
\end{equation}
Let us prove now that $u$ is a weak solution. Notice that we can pass to the limit  (in case of \eqref{eq:model})  using
 (\ref{eq:mon-conv}) and (\ref{eq:weakconv3})to find
 \begin{equation}\label{eq:S-bar}
\int_{\Omega}\overline{A}\cdot\nabla\varphi\mathrm{d}x=\int_{\Omega}f\cdot\nabla\varphi\mathrm{d}x\ \text{ for all }\varphi\in W_{0,\mathrm{div}}^{1,p}\left(\Omega\right), 
 \end{equation}
 and in case of $\eqref{eq:navier-stokes}$ we use the fact that $ W^{1,q(p-1)}(\Omega)$ compactly embeds into $L^2(\Omega)$ and find
  \begin{equation}\label{eq:S-bar2}
\int_{\Omega}(\overline{A}-(u\otimes u))\cdot\nabla\varphi\mathrm{d}x=\int_{\Omega}f\cdot\nabla\varphi\mathrm{d}x\ \text{ for all }\varphi\in W_{0,\mathrm{div}}^{1,p}\left(\Omega\right), 
 \end{equation}

\subsection{Establishing the non-linearity} 
In this subsection we aim to show that
 \begin{equation}\label{eq:S-bar-2}
 \overline{A}=A(\cdot,\varepsilon u). 
 \end{equation} 
 This will be achieved by using Theorem ~\ref{thm:swbdcl}. Indeed, we choose $a^{k}:=\nabla u_{k}$  $s^{k}:=A(\cdot,\varepsilon u_{k})$ , $q:=p$ and $n=N=3$ and $\omega$ as before. By applying $( \ref{eq:a-priori})$ we can see that 
 \[
\left\Vert a^{k}\right\Vert _{L_{\omega}^{p}\left(\Omega\right)}+\left\Vert s^{k}\right\Vert _{L_{\omega}^{p^{\prime}}\left(\Omega\right)}\le c\left\Vert \nabla u_{k}\right\Vert _{L_{\omega}^{p}\left(\Omega\right)}
\]
which means that ~$(\ref{bit3})$ is fulfilled. Then we have that with $g_k=f_k$ in case of $\eqref{eq:model}$ and $g_k=f_k+u_k\otimes u_k$ in case of~\eqref{eq:navier-stokes}
\[
\lim_{k\to\infty}\int_{\Omega}s^{k}\cdot\nabla d^{k}\mathrm{d}x=\lim_{k\to\infty}\int_{\Omega}g_{k}\cdot\nabla d^{k}\mathrm{d}x=0
\]
 using the equation
 $(\ref{eq:mon-conv})$ and the hypothesis on $d^{k}$; this implies $(\ref{bit4})$. Last but not least $(\ref{bit5})$ and $(\ref{bit6})$ follow by the fact that $a^{k}$ is a gradient.  So we apply Theorem ~\ref{thm:swbdcl} to get a sequence of measurable sets $\Omega_{j} \subset \Omega$ with $\abs{\Omega \setminus \Omega_{j} }\to 0$ as $j\to \infty$ so that 
 \begin{equation}\label{eq:Minty1}
A(\cdot, \varepsilon u_{k})\cdot\nabla u_{k}\omega\rightharpoonup\overline{A}\cdot\nabla u\omega\ \mathrm{weakly\ \mathrm{in}}\ L^{1}\left(\Omega_{j}\right).
\end{equation}\label{eq:Minty2}
Now notice that for any $B\in L_{\omega}^{p}\left(\Omega\right)$ we obtain  \begin{equation}
\left(A(\cdot, \varepsilon u_{k})-A(\cdot, B^s)\right)\cdot\left(\nabla u_{k}-B\right)\omega\rightharpoonup\left(\overline{A}-A(\cdot, B^s)\right)\cdot\left(\nabla u-B\right)\omega\ \mathrm{weakly\ \mathrm{in}}\ L^{1}\left(\Omega_{j}\right)
\end{equation}
where we denoted $B^s := \frac{B+B^{T}}{2}$. Denote $A(Q):=\abs{Q}^{p-2}Q$ for $Q:\Omega \to \mathbb{R}^{3\times3}$. Then $\left(A\left(Q^{s}\right)-A\left(P^{s}\right)\right)\cdot\left(Q-P\right)\ge0$. Thus \[\int_{\Omega_{j}}\left(\overline{A}-A(\cdot, B^s)\right)\cdot\left(\nabla u-B\right)\omega\mathrm{d}x\ge0\] which can be rewritten  as \[\infty >
\int_{\Omega}\left(\overline{A}-A(\cdot, B^s)\right)\cdot\left(\nabla u-B\right)\omega\mathrm{d}x\ge\int_{\Omega\setminus\Omega_{j}}\left(\overline{A}-A(\cdot, B^s)\right)\cdot\left(\nabla u-B\right)\omega\mathrm{d}x.
\] Since $\left|\Omega\setminus\Omega_{j}\right|\to0$ as $j\to \infty$ we can apply the dominated donvergence theorem and obtain that 
\[\int_{\Omega}\left(\overline{A}-A(\cdot, B^s)\right)\cdot\left(\nabla u-B\right)\omega\mathrm{d}x\ge0.\]
We can now choose $B:=\nabla u-\delta G$ with $G\in L^{\infty}\left(\Omega\right)$ and $\delta >0$. The last relation becomes \[
\int_{\Omega}\left(\overline{A}-A\left(\cdot,\varepsilon u-\delta G^{s}\right)\right)\cdot G\omega\mathrm{d}x\ge0
.\]
We let $\delta \to 0+$ and we use again Dominated Convergence Theorem to obtain \[
\int_{\Omega}\left(\overline{A}-A\left(\cdot,\varepsilon u\right)\right)\cdot G\omega\mathrm{d}x\ge0\ \text{ for all } G\in L^{\infty}\left(\Omega\right).
\]
Finally, we choose \[G:=-\frac{\overline{A}-A\left(\varepsilon u\right)}{\left|\overline{A}-A\left(\varepsilon u\right)\right|+1}\] and we conclude that $(\ref{eq:S-bar-2})$ is true. \\
The proof is concluded, once the existence and the estimates for the pressure $\pi$ are shown:

\subsection{Existence \& estimates for pressure}\label{sec:pressure}

 We start by noticing that the following holds for $g=f$ in case of $\eqref{eq:model}$ and $g=f+u\otimes u$ in case of~\eqref{eq:navier-stokes}. Since in case of \eqref{eq:navier-stokes} we have that $2q\leq 4$ and so $L^{2q}\Omega\subset W^{-1,q(p-1)}(\Omega)$ 
 We find that
 \[
 \int_{\Omega}A(\cdot,\varepsilon u){\cdot}\varepsilon\varphi\ \mathrm{d}x=\int_{\Omega}g{\cdot}\nabla\varphi\ \mathrm{d}x\quad\text{ for all }\varphi\in W_{0,\mathrm{div}}^{1,q^{\prime}}\left(\Omega\right).
\]
%since we only take the closure of $C_{0,\mathrm{div}}^{0,1}(\Omega)$ in the Sobolev space $W_{0}^{1, q^{\prime}}(\Omega)$ and this is due to the simple observation $\max\left(q^{\prime}, \ \left(\frac{q}{p-1}\right)^{\prime}\right)=q^{\prime}$.
 The weak formulation for our system of equations can also be  rewritten as 
 \[
\int_{\Omega}A(\cdot,\varepsilon u){\cdot}\varepsilon\varphi\ \mathrm{d}x-\int_{\Omega}\pi\mathrm{div}\varphi \mathrm{d}x=\int_{\Omega}g{\cdot}\nabla\varphi\ \mathrm{d}x\quad\text{ for all }\varphi\in W_{0}^{1,q^{\prime}}\left(\Omega\right).
\] or equivalently  
\[
\int_{\Omega}\pi\mathrm{div}\varphi\mathrm{d}x=\int_{\Omega}A(\cdot,\varepsilon u){\cdot}\varepsilon\varphi\ \mathrm{d}x-\int_{\Omega}g{\cdot}\nabla\varphi\ \mathrm{d}x\quad\text{ for all }\varphi\in W_{0}^{1,q^{\prime}}\left(\Omega\right).
\]
We consider the following mapping $ \mathcal{F}:
L_{0}^{q^{\prime}}\left(\Omega\right)\longmapsto\mathbb{R}$ given by 
\[
a\longmapsto\mathcal{F}(a):= \int_{\Omega}A(\cdot,\varepsilon u){\cdot}\varepsilon\mathrm{Bog}\left(a\right)\ \mathrm{d}x-\int_{\Omega}g{\cdot}\nabla\mathrm{Bog}\left(a\right)\ \mathrm{d}x.
\]
It is standard to apply H\"{o}lder's inequality and to conclude that $\mathcal{F}$ is linear and continuous. Therefore there exists \[\pi\in L_{0}^{q}\left(\Omega\right)\cong\left(L_{0}^{q^{\prime}}\left(\Omega\right)\right)^{\prime}\] such that 
\[
\int_{\Omega}\pi a\mathrm{d}x=\int_{\Omega}A(\cdot,\varepsilon u){\cdot}\varepsilon\mathrm{Bog}\left(a\right)\ \mathrm{d}x-\int_{\Omega}g{\cdot}\nabla\mathrm{Bog}\left(a\right)\ \mathrm{d}x
.\]
Since $\varphi\in W_{0}^{1,q^{\prime}}\left(\Omega\right)\implies\mathrm{div}\varphi\in L_{0}^{q^{\prime}}\left(\Omega\right)$ and since $\mathrm{div}\varphi=\mathrm{div}\mathrm{Bog}\mathrm{div}\varphi$  we  obtain :
\begin{align*}
\int_{\Omega}\pi\mathrm{div}\varphi\mathrm{d}x & =\int_{\Omega}A(\cdot,\varepsilon u){\cdot}\varepsilon\mathrm{Bog}\left(\mathrm{div}\varphi\right)\ \mathrm{d}x-\int_{\Omega}g{\cdot}\nabla\mathrm{Bog}\left(\mathrm{div}\varphi\right)\ \mathrm{d}x\\
 & =\int_{\Omega}A(\cdot,\varepsilon u){\cdot}\varepsilon\varphi\ \mathrm{d}x-\int_{\Omega}g{\cdot}\nabla\varphi \text{d}x\quad\text{ for all }\varphi\in W_{0}^{1,q^{\prime}}\left(\Omega\right)
\end{align*}
where the last equality is due to the fact that $\mathrm{\varphi-Bog}\mathrm{div}\varphi\in W_{0,\mathrm{div}}^{1,q^{\prime}}\left(\Omega\right)$ can be used as a test function. \\
By duality, we have 
\[
\left\Vert \pi\right\Vert _{L^{q}\left(\Omega\right)}=\left\Vert \mathcal{F}\right\Vert _{op}\le\left\Vert A(\cdot,\varepsilon u)\right\Vert _{L^{q}\left(\Omega\right)}+\left\Vert g\right\Vert _{L^{q}\left(\Omega\right)}
%\le c\left\Vert f\right\Vert _{L^{q}\left(\Omega\right)}.
\]
And in exactly the same manner we obtain the estimate $\left\Vert \pi\right\Vert _{L_{\omega}^{p'}\left(\Omega\right)}\le c\norm{ g}_{L^{p^{\prime}}_\omega\left(\Omega\right)}+c\norm{\abs{\nabla v}^{p-1}} _{L^{p^{\prime}}_\omega\left(\Omega\right)}$  using now the fact \[\left(L_{\omega}^{p'}\left(\Omega\right)\right)^{\prime}\cong L_{\omega^{\prime}}^{p}\left(\Omega\right),\quad\omega^{\prime}:=\omega^{-\frac{1}{p-1}}.\] 
%and since $\omega^{-\frac1{p-1}}\geq 1$ we find 
%$\left\Vert \pi\right\Vert _{L^{p}\left(\Omega\right)}\le c\norm{ g}_{L^{p^{\prime}}_\omega\left(\Omega\right)}+c\norm{\abs{\nabla v}^{p-1}} _{L^{p^{\prime}}_\omega\left(\Omega\right)}$
 This now  ends the proof of the a-priori bounds of the Theorem~\ref{thm:main2} and Theorem~\ref{thm:p-NAVIER-STOKES} and Theorem~\ref{thm:new-main}.

\appendix
\section{Construction of the relative truncation}
\label{appendix}
\subsection{Inverse curl operator in weighted spaces}
 We will provide the following theorem of the inverse curl operator for weighted spaces. It seems to be an improvement to Lipschitz domains even in Lebesgue spaces~\cite[Corollary 2.3]{BS90}.
 \begin{theorem}\label{thm:weight-curl}
Let $p\in (1,\infty)$, let $\Omega \subset \mathbb{R}^{3}$ be a bounded subset of $\Rbb^3$ with Lipschitz boundary 
%s.t. $\Omega^c$ is simply connected
 and let $u \in  W_{0, \text{div}}^{1,p}(\Omega)$. 
 There exists $w\in W_{0}^{2,p}(\Omega)$ such that $\text{curl}\left(w\right)=u$. In particular
 \[
 \int_\Omega \abs{\nabla^2 w}^p\, \text{d}x\leq c\int_\Omega \abs{\nabla u}^p\, \text{d}x,
 \]
 where $c$ depends on $p,n$ and the domain.
 
 Moroever, if $\omega\in \Acal_{p}$  and   $\nabla u \in  L^p_\omega (\Omega)$,
 then there exists $w\in W_{0}^{2,1}(\Omega)$ such that $\text{curl}\left(w\right)=u$ and $\nabla^2 w\in L^p_\omega(\Omega)$. In particular
 \[
 \int_\Omega \abs{\nabla^2 w}^p\, d\omega \leq c\int_\Omega \abs{\nabla u}^p\, d\omega,
 \]
 where $c$ depends on $p,n$, the $\Acal_p$-constant and the domain.
 \end{theorem}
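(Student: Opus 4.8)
\textbf{Proof strategy for Theorem~\ref{thm:weight-curl}.} The plan is to construct the potential $w$ locally on a Whitney covering of $\Omega$ and then glue the local pieces together by a partition of unity, correcting the error terms with a weighted Bogovski\u{i} operator. First I would fix the Whitney decomposition $\{Q_i\}$ of $\Omega$ from Proposition~\ref{pro:cubes} together with the subordinate partition of unity $\{\psi_i\}$. Since $\Omega$ is Lipschitz we may also fix a finite covering of a neighborhood of $\partial\Omega$ by ``good'' open sets in which $\Omega$ is (up to rotation) a subgraph, so that the star-shapedness needed for a local inverse curl is available; on each such piece, and on each interior ball, the existence of a local $w_i$ with $\curl(w_i)=u$ and the natural second-order estimate is classical (it is essentially~\cite[Corollary~2.3]{BS90} on balls). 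Because the domains where we solve are balls or bi-Lipschitz images of cubes with controlled eccentricity (property~(c) of Proposition~\ref{pro:cubes}), the constants in these local estimates are uniform.

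The main step is the gluing. Set $w:=\sum_i \psi_i w_i$ (after subtracting suitable affine/polynomial pieces on each $\frac32 Q_i$ so that $w_i$ has the right vanishing moments, exactly as in the relative truncation construction of~\cite{BS16}). Then $\curl(w)=u+ \sum_i \nabla\psi_i \times w_i =: u + r$, where the error $r$ is supported in $\Omega$, has vanishing mean on each patch by the moment normalization, and is controlled by $\sum_i \diam(Q_i)^{-1}\|w_i-(\text{poly})\|$ on $\frac32 Q_i$; using $\diam(Q_i)|\nabla\psi_i|\le c$ and a Poincar\'e inequality on each patch this is dominated, patchwise, by $\|\nabla u\|$ on a fixed dilate of $Q_i$. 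The finite-overlap property (e) of the Whitney family then turns the sum of $p$-th powers (resp.\ weighted $p$-th powers) of these local quantities into $c\int_\Omega|\nabla u|^p\,dx$ (resp.\ $c\int_\Omega|\nabla u|^p\,d\omega$): here the weighted bound uses the weighted Poincar\'e inequality~\eqref{eq:weightponc} together with the fact that on a single Whitney cube the $A_p$ constant is comparable to the global one, so no loss is incurred. Since $r$ has zero integral over $\Omega$, we apply the (weighted) Bogovski\u{i} operator to solve $\curl(z)=-r$ with the same kind of estimate — concretely one writes $-r$ as a curl by first solving $\Diverg$ of a skew-symmetric field via Bogovski\u{i}, or one simply absorbs $r$ by a second-order Bogovski\u{i}-type lift; either way $z\in W^{2,1}_0(\Omega)$ with $\int_\Omega|\nabla^2 z|^p\,d\omega\le c\int_\Omega|r|^p\,d\omega\le c\int_\Omega|\nabla u|^p\,d\omega$, using Theorem~4 in the weighted form. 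Then $w+z$ is the desired potential; the zero second-order trace is inherited from the construction ($\psi_i$ compactly supported in the interior patches, the graph patches handled by the reflection built into the local solver, and $z\in W^{2,1}_0$).

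The part I expect to be the main obstacle is making the local-to-global estimate genuinely \emph{weighted} near the Lipschitz boundary: away from $\partial\Omega$ the interior Whitney cubes are comfortably inside $\Omega$ and everything is standard, but on the boundary patches one must run the inverse-curl construction on a bi-Lipschitz image of a cube and then control $\|\nabla^2 w_i\|_{L^p_\omega}$ by $\|\nabla u\|_{L^p_\omega}$ \emph{on that same patch} with a constant depending only on $A_p(\omega)$ and the Lipschitz character — the subtlety is that pushing forward an $A_p$ weight under a bi-Lipschitz map keeps it in $A_p$ only with a controlled constant, and one must check the relevant singular-integral/potential estimates (Calder\'on–Zygmund for the Bogovski\u{i} kernel and the Riesz-potential estimate behind the gain of one derivative) are available in the weighted setting on these transformed domains. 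Once the uniformity of the weighted local estimates is secured, the summation via the finite-overlap property and the absorption of the error term by the weighted Bogovski\u{i} operator are routine, and the unweighted statement follows by taking $\omega\equiv 1$ (for which the $W^{2,p}_0$ regularity, rather than merely $W^{2,1}_0$, is immediate).
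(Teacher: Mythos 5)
Your route is genuinely different from the paper's, but as written it has a gap at its crucial step, the correction of the gluing error. You need $z\in W^{2,p}_0(\Omega)$ with $\curl z=-r$ and $\int_\Omega\abs{\nabla^2 z}^p\,\omega\,\mathrm{d}x\le c\int_\Omega\abs{r}^p\,\omega\,\mathrm{d}x$, but no Bogovski\u{\i}-type operator delivers this: inverting a single first-order constraint ($\divergence$ or $\curl$) with zero boundary values gains exactly one derivative, so from $r\in L^p_\omega$ one can only expect $z\in W^{1,p}_{0}$ with $\norm{\nabla z}_{L^p_\omega}\lesssim\norm{r}_{L^p_\omega}$; controlling $\nabla^2 z$ and prescribing the zero trace of $\nabla z$ as well would require $r$ to lie in a first-order space with vanishing trace and would require precisely a second-order, zero-boundary-value inverse curl with weighted estimates --- i.e.\ the statement of Theorem~\ref{thm:weight-curl} itself, so the argument is circular where it matters. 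The same difficulty already sits in your local solvers on the boundary patches: \cite[Corollary~2.3]{BS90} gives $\curl w_i=u$ with $w_i$ vanishing on the boundary, not $\nabla w_i$ as well, and the appeal to a ``reflection built into the local solver'' is not available for a general Lipschitz graph. A further wrinkle: subtracting affine normalizations from $w_i$ so that Poincar\'e applies changes $\curl w_i$ unless the subtracted field is a gradient, and gradients of quadratics cannot normalize the skew-symmetric part of the mean of $\nabla w_i$, so the patchwise bound of $r$ by $\nabla u$ is not as routine as claimed. Finally, the weighted Calder\'on--Zygmund/bi-Lipschitz pushforward issue that you yourself flag as the main obstacle is real and is not resolved in the proposal.

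For comparison, the paper's proof avoids localization entirely and is much shorter: extend $u$ by zero and take the Newtonian-potential inverse curl $\tilde w$, for which $\curl u\mapsto\nabla^2\tilde w$ is a global singular integral operator, hence bounded on $L^p_\omega$ for every $\omega\in\mathcal{A}_p$ with constant depending only on the Muckenhoupt constant; since $\curl\tilde w=0$ on $\Omega^c$, the Helmholtz decomposition gives $\tilde w=\nabla z$ there, and the weighted extension theorem for Lipschitz domains (Theorem~\ref{thm:ext}) yields $E(\eta z)$ on all of $\mathbb{R}^3$; setting $w=\tilde w-\nabla E(\eta z)$ corrects $w$ and $\nabla w$ on $\partial\Omega$ without altering the curl, and all estimates follow from the two global operators. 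If you wish to salvage your Whitney-plus-partition-of-unity scheme, you must first construct the higher-order zero-boundary-value inverse curl used in your correction step, which is exactly what is being proved; so either supply that independently or adopt the global potential-plus-extension argument.
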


 The proof relies on the following extension results \cite[Theorem~2.1.13]{Turesson 2000}.
 \begin{theorem}
 \label{thm:ext}
 Let $p\in (1,\infty)$, let $\Omega \subset \mathbb{R}^{3}$ be an open, bounded and Lipschitz domain. Let $\omega\in \Acal_p$. Assume that $\nabla^k g\in L^p_{\omega}(\Omega^c)$, then the there exists an extension $E(g)\in W^{k,p}_{\omega}(\Omega)$, such that
 \[
 E(g)=g\text{ in }\Omega^c\text{ and }\norm{E(g)}_{ W^{k,p}_{\omega}(\Omega)}\leq c\norm{\nabla^k g}_{L^p_{\omega}(\Omega^c)}.
 \]
 \end{theorem}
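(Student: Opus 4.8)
The plan is to deduce the statement from the classical Stein extension operator for a region lying above a Lipschitz graph, together with the observation that this operator is, up to harmless lower‑order error terms, pointwise dominated by the Hardy--Littlewood maximal function; the weighted bound then comes for free from Muckenhoupt's theorem \eqref{eq:Muc}. First I would localise. Since $\partial\Omega$ is compact and Lipschitz, cover it by finitely many open balls $B_1,\dots,B_N$ such that, after a rotation $R_i$, one has $R_i(\Omega\cap B_i)=\{x_3<\phi_i(x')\}\cap R_i(B_i)$ for a Lipschitz function $\phi_i\colon\Rbb^2\to\Rbb$, so that $R_i(\Omega^c\cap B_i)$ is the region on or above the graph. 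Add a ball $B_0\Subset\Omega$ with $\overline\Omega\subset\bigcup_{i=0}^N B_i$ and pick a smooth partition of unity $\{\eta_i\}_{i=0}^N$ subordinate to this cover, chosen so that $\sum_{i\ge1}\eta_i\equiv1$ on a neighbourhood of $\partial\Omega$. The bi‑Lipschitz change of variables $\Phi_i(x',x_3)=(x',x_3-\phi_i(x'))$ straightens the boundary and has unit Jacobian; since rotations and bi‑Lipschitz maps with bounded distortion preserve the class $\Acal_p$ (with a controlled constant) and the comparability of weighted $L^p$ norms, it suffices to solve the model problem: extend $g$ from the closed upper half space $\Rbb^3_+=\{x_3\ge0\}$ into $\{x_3<0\}$, with $\omega\in\Acal_p(\Rbb^3)$.

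For the model problem I would use Stein's extension operator. Let $\delta^*(x)$ be a regularised distance to $\{x_3=0\}$, i.e.\ a function smooth on $\{x_3<0\}$ with $\delta^*(x)\sim\abs{x_3}$ and $\abs{\nabla^m\delta^*(x)}\lesssim\abs{x_3}^{1-m}$, fix a rapidly decreasing $\psi$ supported in $[1,\infty)$ with $\int_1^\infty\psi(\lambda)\,\mathrm{d}\lambda=1$ and $\int_1^\infty\lambda^m\psi(\lambda)\,\mathrm{d}\lambda=0$ for all $m\ge1$, and set
\[
\Ecal g(x',x_3):=\int_1^\infty g\bigl(x',\,x_3-\lambda\,\delta^*(x)\bigr)\,\psi(\lambda)\,\mathrm{d}\lambda,\qquad x_3<0 .
\]
The classical analysis of this operator (see \cite{stein}) shows that $g$ and $\Ecal g$ together form a $W^{k,p}$ function across $\{x_3=0\}$ for every $k$, and that for each multi‑index $\alpha$ with $\abs{\alpha}\le k$ one has the pointwise estimate $\abs{\nabla^\alpha(\Ecal g)(x)}\lesssim\sum_{j\le\abs{\alpha}}M\bigl(\nabla^{j}g\bigr)(x)$ for a.e.\ $x\in\{x_3<0\}$, where $M$ acts on the relevant derivatives extended by zero. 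Because $\omega\in\Acal_p$, Muckenhoupt's theorem \cite{Mu72} (quoted in \eqref{eq:Muc}) gives $\norm{Mh}_{L^p_\omega}\le c\,\norm{h}_{L^p_\omega}$, so integrating the pointwise bounds against $\omega$ yields $\norm{\Ecal g}_{W^{k,p}_\omega(\{x_3<0\})}\le c\,\norm{g}_{W^{k,p}_\omega(\Rbb^3_+)}$ with $c=c(p,k,\Acal_p(\omega))$.

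Finally I would glue. Define $E(g):=g$ on $\Omega^c$ and, on $\Omega$, $E(g):=\sum_{i\ge1}\eta_i\cdot\bigl(\Ecal_i g\text{ pulled back through }R_i,\Phi_i\bigr)$, the contribution of $\eta_0$ (supported away from $\partial\Omega$) being taken to be $0$ there. Since $\sum_{i\ge1}\eta_i\equiv1$ near $\partial\Omega$, in each chart the function $E(g)$ agrees with $g$ in the $W^{k,p}$ sense across $\partial\Omega$, hence $E(g)\in W^{k,p}_\omega(\Rbb^3)$; the Leibniz rule, the boundedness of each $\Ecal_i$ just established, and the $\Acal_p$‑invariance under the charts give $\norm{E(g)}_{W^{k,p}_\omega(\Omega)}\le c\sum_i\norm{g}_{W^{k,p}_\omega(\Omega^c\cap B_i)}\le c\,\norm{g}_{W^{k,p}_\omega(\Omega^c)}$. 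To recover the homogeneous right‑hand side $c\,\norm{\nabla^k g}_{L^p_\omega(\Omega^c)}$ stated in the theorem, one uses that in the application $g$ is compactly supported: applying the weighted Poincaré inequality \eqref{eq:weightponc} on a large ball (and iterating over the orders of differentiation, noting $\mean{\nabla^j g}_{B}\to0$ as $B\nearrow\Rbb^3$) controls all lower‑order derivatives of $g$ by $\nabla^k g$, whence the claimed estimate.

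I expect the \emph{main obstacle} to be the pointwise maximal‑function domination of $\nabla^\alpha(\Ecal g)$ uniformly up to the Lipschitz boundary: this is the only genuinely non‑routine ingredient, and it rests on the fine properties of the regularised distance $\delta^*$ together with the vanishing‑moment conditions on $\psi$ (which make $\Ecal g$ match $g$ to order $k-1$ at the interface and cancel the would‑be singular contributions). Everything else — the boundary straightening, the $\Acal_p$‑invariance of bi‑Lipschitz changes of variables, Muckenhoupt's theorem, and the Leibniz gluing — is standard, so one may alternatively simply cite the unweighted pointwise bound from \cite{stein} and let the weighted conclusion follow immediately from \eqref{eq:Muc}.
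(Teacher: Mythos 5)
The paper does not actually prove Theorem~\ref{thm:ext}: it is imported verbatim from \cite[Theorem~2.1.13]{Turesson 2000} (whose proof, in turn, rests on Chua-type extension theorems built on the Jones--Whitney cube construction), so the only question is whether your argument is correct — and it has a genuine gap at its central step. First, the reduction to the half-space by the straightening map $\Phi_i(x',x_3)=(x',x_3-\phi_i(x'))$ is not available at the orders the theorem is needed for: $\Phi_i$ is merely bi-Lipschitz, and composition with a Lipschitz map does not preserve $W^{k,p}$ for $k\ge 2$, since the chain rule produces derivatives of $\nabla\phi_i$, which is only $L^\infty$. The theorem is applied in the proof of Theorem~\ref{thm:weight-curl} with $k=3$, so the model-problem reduction already fails there; avoiding exactly this straightening is the reason Calder\'on/Stein/Jones-type operators are built to act directly on the Lipschitz graph domain.

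Second, even if you apply Stein's operator directly on the graph domain, the key claim — the pointwise bound $\abs{\nabla^\alpha(\Ecal g)(x)}\lesssim\sum_{j\le\abs{\alpha}}M(\nabla^j g)(x)$ — is not a standard fact and is false as stated: $\Ecal g(x)$ is an average of $g$ along a one-dimensional vertical segment of length comparable to $\delta^*(x)$, and line averages are not dominated by the three-dimensional Hardy--Littlewood maximal function (test on a function supported in a thin vertical tube through $x$). Replacing $M$ by the directional (vertical) maximal operator does not help either, because its boundedness on $L^p_\omega(\setR^3)$ requires a uniform one-dimensional $\Acal_p$ condition on almost every vertical line, which a general $\omega\in\Acal_p(\setR^3)$ does not satisfy. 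So the assertion that ``the weighted bound then comes for free from \eqref{eq:Muc}'' is precisely where the real difficulty of the theorem sits; the proofs in the literature (Chua; Turesson's Theorem~2.1.13) circumvent it by the Whitney-cube/polynomial-fitting construction, in which the relevant quantities are solid cube averages, controlled by the weighted maximal theorem together with doubling and bounded overlap of the cubes. Your final remark — recovering the homogeneous right-hand side via compact support of $g$ and the weighted Poincar\'e inequality \eqref{eq:weightponc} — is sensible and indeed necessary, since the estimate as literally stated fails for nonzero constants, but it does not repair the two issues above.
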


 \begin{proof}[Proof of Theorem~\ref{thm:weight-curl}] 
 We begin with the same strategy as in~\cite{BS90}. Let first $u\in C^{\infty}_{0,\divergence}(\Omega)$ (the general results follows then by density). We extend $u$ by $0$ to the whole space and take the global solution of the inverse curl of $u$
\[
\tilde{w}(y)=\int_{\Rbb^3}\frac{\curl(u)(z)}{\abs{y-z}}\, dz.
\] 
The mapping $\curl(u)\mapsto \nabla^2\tilde w$ is a singular integral operator and hence~(cf. \cite{stein})
 \[
\norm{\nabla^2\tilde w}_{L^{p}_{\omega}(\Rbb^3)}\leq c\norm{\nabla u}_{L^p_{\omega}(\Omega)}.
\]
It is easy to check that $\curl\tilde w=u\chi_{\Omega}$. However $\nabla \tilde w\neq 0$ on $\partial\Omega$. We will correct the boundary value with another singular operator on $\Omega$. First, since $curl\tilde w=0$ on $\Omega^c$ \textit{the Helmholtz decomposition} implies that there is a $z\in W^{3,1}_{\textrm{loc}}(\Rbb^3\setminus \Omega)$ satisfying $\nabla z(x)=\tilde{w}(x)$ for all $x\in \Omega^c$.
Obviously
\[
\norm{\nabla^3 z}_{L^p_\omega(\Rbb^3\setminus\Omega)}\leq \norm{\nabla^2\tilde w}_{L^{p}_{\omega}(\Rbb^3)}\leq c\norm{u}_{W^{1,p}_\omega(\Omega)}.
\] 
Since $\Omega$ is bounded there exists $R>0$ such that $\Omega \subseteq B_R(0)$. We consider a smooth function $\eta$ that equals $1$ in $B_R(0)$, is $0$ outside $2B_R(0)$ and $0 \le \eta \le 1$. Thus $\eta z \in W^{3,p}_{\omega} (\Omega^c)$. Now we apply the extension operator from \ref{thm:ext} so that $E(\eta z) \in W^{3,p}_\omega(\mathbb{R}^3)$ and 
\[
\norm{E(\eta z)}_{W^{3,p}_{\omega}(2B_R(0))}\leq c\norm{u}_{W^{1,p}_\omega(\Omega)}.
\]
 We now set $w:=\tilde{w}-\nabla E(\eta p)$. Consequently $w=\nabla w=0$ on $\partial \Omega$. Furthermore, $\curl(w)=\curl(\tilde{w})=u\chi_\Omega$ and  $w\in W_{0,\omega}^{2,p}(\Omega)$ by the above estimates. 
\end{proof}
\subsection{Construction of the relative truncation}
 In the following section we construct a \textit{relative truncation} $u_\mathcal{O}$, where $\mathcal{O}$ is the set over which we wish to change the function $u$. Th truncation is supposed to preserve solenoidality {\bf and} the  zero boundary values. We follow the approach in \cite{BS16}. If not specified otherwise we use the trivial extension by $0$ of all functions to the whole-space without any further notice.

The construction makes use of the celebrated Whitney covering.
We use here a version which is present in ~\cite{Grf14} and then slightly modified in ~\cite{BS16} and ~\cite{BDS13}.

\begin{proposition} 
\label{pro:cubes}
 Let $\mathcal{O}$ be an open and proper subset of $\mathbb{R}^{d}$ . Then there exists a countable family $Q_{i}$ of closed, dyadic cubes such that:
\begin{enumerate}[(a)]
\item $\bigcup_{i}Q_{i}=\mathcal{O}$ and all the cubes $Q_{i}$ have disjoint interiors.
\item $\text{diam}(Q_{i})<\text{dist}\left(Q_{i},\mathcal{O}^{c}\right)\le4\text{diam}\left(Q_{i}\right)$.
\item If $Q_{i} \cap Q_{j} \neq \emptyset$, then $\frac{\text{\text{diam}(\ensuremath{Q_{i}})}}{\text{diam}(Q_{j})}\in\left[\frac{1}{2},2\right]$
\item For given $Q_{i}$, there exists at most $4^{d}-2^{d}$ cubes $Q_{j}$ touching $Q_{i}$ (boundaries intersect, but not the interiors).
\item The family of cubes  $\left\{ \frac{3}{2}Q_{i}\right\} _{i\in\mathbb{N}}$ has finite intersection. The family can be split in $4^{d}-2^{d}$ disjoint families.
\item There is a partition of unity , $\psi_{i}\in C^{\infty}_{c}\left(\mathbb{R}^{d}\right)$, such that $\chi_{\frac{1}{2}Q_{i}}\le\psi_{i}\le\chi_{\frac{9}{8}Q_{i}}$ and $\text{diam}(Q_{i})\left|\nabla\psi_{i}\right|\le c\left(d\right)$ uniformly.

\end{enumerate}
\end{proposition}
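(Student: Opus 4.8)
The plan is to realise $\{Q_i\}$ as the family of \emph{maximal Whitney cubes} of $\mathcal{O}$ and then to check (a)--(f) one at a time. Let $\mathcal{D}$ be the standard dyadic mesh of $\mathbb{R}^d$ (the union over $k\in\mathbb{Z}$ of the grids of closed cubes of side $2^{-k}$ with vertices in $2^{-k}\mathbb{Z}^d$), and call $Q\in\mathcal{D}$ \emph{admissible} if $\diam(Q)<\dist(Q,\mathcal{O}^c)$. Since $\mathcal{O}$ is open and proper we have $0<\dist(x,\mathcal{O}^c)<\infty$ for every $x\in\mathcal{O}$, so each such $x$ lies in admissible cubes of arbitrarily small side, whereas no dyadic cube meeting $\mathcal{O}^c$ is admissible. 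The decisive elementary observation is that admissibility is \emph{inherited downwards along dyadic chains}: if $Q$ is not admissible, neither is its dyadic parent $\widehat{Q}$, because passing to the parent doubles the diameter and cannot increase the distance to $\mathcal{O}^c$. Hence, in the increasing chain of dyadic cubes containing a fixed $x\in\mathcal{O}$, the admissible ones are exactly those below some scale, so the largest admissible cube $Q(x)\ni x$ is well defined. I let $\{Q_i\}_i$ enumerate the distinct $Q(x)$, $x\in\mathcal{O}$; this is a countable subfamily of $\mathcal{D}$. Distinct dyadic cubes are either nested or have disjoint interiors, and two nested admissible cubes cannot both be maximal, so the $Q_i$ have pairwise disjoint interiors; they cover $\mathcal{O}$ because $x\in Q(x)$, and each lies in $\mathcal{O}$ because an admissible cube has positive distance to $\mathcal{O}^c$. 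This is (a).

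For (b), the lower bound is the admissibility of $Q_i$. For the upper bound, the dyadic parent $\widehat{Q_i}$ is \emph{not} admissible (otherwise $Q_i$ would not be maximal), so $\dist(\widehat{Q_i},\mathcal{O}^c)<\diam(\widehat{Q_i})=2\diam(Q_i)$; since $Q_i$ is a corner child of $\widehat{Q_i}$, every point of $\widehat{Q_i}$ lies within $\diam(Q_i)$ of $Q_i$, whence $\dist(Q_i,\mathcal{O}^c)\le\dist(\widehat{Q_i},\mathcal{O}^c)+\diam(Q_i)<3\diam(Q_i)\le4\diam(Q_i)$. For (c), if $Q_i\cap Q_j\neq\emptyset$ I pick $x$ in the intersection; assuming $\diam(Q_j)\le\diam(Q_i)$, admissibility of $Q_i$ and (b) for $Q_j$ give $\diam(Q_i)<\dist(Q_i,\mathcal{O}^c)\le\dist(x,\mathcal{O}^c)\le\dist(Q_j,\mathcal{O}^c)+\diam(Q_j)<4\diam(Q_j)$; since the two side lengths are powers of $2$, this pins $\diam(Q_i)/\diam(Q_j)\in\{1,2\}$, which is (c).

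Property (d) is then a finite dyadic count: by (c) every cube $Q_j$ touching $Q_i$ has side $\tfrac12,1$ or $2$ times that of $Q_i$, hence is a dyadic cube contained in a fixed concentric dilate of $Q_i$; counting, scale by scale, how many disjoint-interior dyadic cubes of these three sizes can actually abut $Q_i$ yields the bound $4^d-2^d$ (this matches the count in the cited versions of the lemma). For (e), if $\tfrac32 Q_i\cap\tfrac32 Q_j\neq\emptyset$ then $\dist(Q_i,Q_j)\le\tfrac14(\diam(Q_i)+\diam(Q_j))$ up to a dimensional factor, and comparing $\dist(\cdot,\mathcal{O}^c)$ at a point of $\tfrac32 Q_i\cap\tfrac32 Q_j$ via (b) as above forces $Q_i$ and $Q_j$ to have comparable size; thus for each $i$ only a dimensionally bounded number of $j$ occur, which is the finite-intersection statement. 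Introducing the graph on the index set with an edge $i\sim j$ whenever $\tfrac32 Q_i\cap\tfrac32 Q_j\neq\emptyset$, this graph has bounded degree, so a greedy colouring splits $\{Q_i\}$ into $4^d-2^d$ subfamilies whose $\tfrac32$-dilates are pairwise disjoint.

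Finally, for (f) I fix for each $i$ a bump $\phi_i\in C_c^\infty(\mathbb{R}^d)$ with $\chi_{Q_i}\le\phi_i\le\chi_{\frac{9}{8}Q_i}$ and $\diam(Q_i)\,\abs{\nabla\phi_i}\le c(d)$ (legitimate since $\tfrac{9}{8}Q_i\subset\mathcal{O}$ by (b)); then $\Phi:=\sum_i\phi_i$ is locally a finite sum by the finite-overlap property, is $C^\infty$, and satisfies $\Phi\ge1$ on $\mathcal{O}$ because the $Q_i$ cover $\mathcal{O}$; I set $\psi_i:=\phi_i/\Phi$. Then $\sum_i\psi_i\equiv1$ on $\mathcal{O}$ and $\supp\psi_i\subset\tfrac{9}{8}Q_i$; the inequality $\chi_{\frac12 Q_i}\le\psi_i$ holds because a short computation with the dilation factors $\tfrac{9}{8}$ and $\tfrac12$, using (c), shows that no $\phi_j$ with $j\neq i$ is nonzero on $\tfrac12 Q_i$, so there $\Phi\equiv\phi_i\equiv1$; and $\diam(Q_i)\,\abs{\nabla\psi_i}\le c(d)$ follows from the quotient rule together with the bounds on $\abs{\nabla\phi_i}$, the estimate $\abs{\nabla\Phi}\le\sum_{j\sim i}\abs{\nabla\phi_j}$ (a sum of boundedly many terms of comparable scale by (d)), and $\Phi\ge1$. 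The only genuinely delicate parts of the proof are the geometric counting behind the constants $4^d-2^d$ in (d) and (e) and the careful tracking of the numerical constants throughout; the conceptual content (existence of the maximal admissible cube and properties (a), (b)) is routine.
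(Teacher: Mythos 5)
Your proposal is correct and follows essentially the same route as the paper: both construct the classical Whitney family of maximal dyadic cubes subject to the comparability condition $\diam(Q)<\dist(Q,\mathcal{O}^c)\le 4\diam(Q)$ (the paper selects cubes via the distance shells $\mathcal{O}_m$ and then passes to maximal elements, while you impose the admissibility condition directly and take maximal admissible cubes — an equivalent packaging), and both verify (b)--(f) by the same triangle-inequality comparisons and the same normalized-bump construction of the partition of unity. The only points you leave as sketchy as the paper does are the combinatorial counts behind $4^d-2^d$ in (d)--(e), so no genuine gap beyond what the original proof also elides.
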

\begin{proof} We shall first fix the notation; namely, for any $m \in \mathbb{Z}$ we denote by: \begin{itemize}
\item $\mathcal{D}_{m}$ the set of all dyadic cubes of length $2^{-m}$
\item $\mathcal{O}_{m}:=\left\{ x\in\mathcal{O} :\  2\sqrt{d}2^{-m}<\text{dist}\left(x,\mathcal{O}^{c}\right)\le4\sqrt{d}2^{-m}\right\} $
\item $\mathcal{F}_{m}:=\left\{ Q\in\mathcal{D}_{m}:Q\cap\mathcal{O}_{m}\neq\emptyset\right\} $
\item $\mathcal{F}^{\prime}:=\bigcup_{m\in\mathbb{Z}}\mathcal{F}_{m}$
\end{itemize}
It is immediate to see that $\bigcup_{m\in \mathbb{Z}} \mathcal{O}_{m}=\mathcal{O}$ and , consequently $\bigcup_{Q\in \mathcal{F}^{\prime}} Q=\mathcal{O}$.
The property (b) holds for any $Q\in \mathcal{F}^{\prime}$ since if $Q\in \mathcal{F}_{m}$ and $x \in Q \cap  \mathcal{O}_{m}$, then 
$$\text{diam}(Q)<\text{dist}\left(x,\mathcal{O}^{c}\right)-\text{diam}(Q)\le\text{dist}\left(Q,\mathcal{O}^{c}\right)\le\text{dist}\left(Q,\mathcal{O}^{c}\right)\le4\text{diam}(Q).$$
In order to fulfill the condition a) we need to choose a subfamily of $\mathcal{F}^{\prime}$  such that any two cubes in this new subfamily have disjoint interiors. Since two dyadic cubes have disjoint interiors or one contains the other we can define, for any $Q\in \mathcal{F}^{\prime}$, $Q^{\text{max}}$ to be the maximal cube of $\mathcal{F}^{\prime}$ that contains $Q$. Now set $\mathcal{F}:=\left\{ Q^{\text{max}}:\ Q\in\mathcal{F}^{\prime}\right\} $. Then any two cubes in $\mathcal{F}$ have disjoint interiors by maximality. Now the conditions a) and b) are fulfilled. \\
To prove c), consider $Q, Q^{\prime} \in \mathcal{F}$ with $Q \cap Q^{\prime} \neq \emptyset$. We have $$\text{diam}\left(Q\right)<\text{dist}\left(Q,\mathcal{O}^{c}\right)\le\text{\text{dist}\ensuremath{\left(Q,Q^{\prime}\right)}+dist}\left(Q^{\prime},\mathcal{O}^{c}\right)\le4\text{diam}\left(Q^{\prime}\right)$$
If $l(Q)=2^{-k}$ and $l(Q^{\prime})=2^{-l}$ we have that $-k<-l+2$ or $-k \le -l +1$ and thus $\frac{\text{\text{diam}(Q)}}{\text{diam}(Q^{\prime})}\in\left[\frac{1}{2},2\right]$. \\
Now, following c) we notice that given $Q \in \mathcal{D}_{m}$ from $\mathcal{F}$, any cube from $\mathcal{F}$  that touches Q contains at least one cube in $\mathcal{D}_{m+1}$ that touches $Q$. Thus the number of neighbours of $Q$ is at least $4^{d}-2^{d}$ which is the number of cubes in $\mathcal{D}_{m+1}$ that touch $Q$.  \\
Using the condition b) we can show that $\frac{3}{2}Q_{i} \subset \mathcal{O}$; otherwise there would exist $x \in \frac{3}{2}Q_{i} \cap \mathcal{O}^{c}$ and therefore $$\text{diam}\left(Q_{i}\right)<\text{dist}\left(Q_{i},\mathcal{O}^{c}\right)\le\text{dist}\left(Q_{i},x\right)\le\frac{1}{2}\text{diam}\left(Q_{i}\right)$$ which is a contradiction.  Thus $\bigcup_{i}\frac{3}{2}Q_{i}=\mathcal{O}=\bigcup_{i}Q_{i}$. We see that $\frac{3}{2}Q_{i}$ only intersects its neighbours-by c)- and therefore each $x\in Q_{i}$ is covered by at most $4^{d}-2^{d}$ elements of $\left\{ \frac{3}{2}Q_{k}\right\} _{k\in\mathbb{N}}$. \\
By c) we also see that $\frac{9}{8}Q_{i}$ and $\frac{1}{2}Q_{j}$ do not intersect if $i \neq j$.  Therefore consider $\psi$  a  smooth function such that it  equals  1 on $[-1/2 , 1/2]^{d}=:Q$ and 0 outside $[-9/8, 9/8]^{d} =: Q^{*}$. Then we can define for any $k \in \mathbb{N}$, $\tilde{\psi}_k:=\psi ((x-c_{k} )/ l(Q_{k}))$, where $c_{k}$ is the center of the cube $Q_{k}$ and $l(Q_k)$ its side-length. Finally we consider $\psi_{k}:=\frac{\tilde{\psi}_{k}}{\sigma}$, where $\sigma:=\sum_{k}\tilde{\psi}_{k}$.  Since  $\psi$ is a smooth function with compact support, it is uniformly bounded  (and the same applies for  any $\partial_{i} \psi$)  . This is the partition of unity we wanted. 
\end{proof}

\noindent

Let $\Omega \subset \mathbb{R}^{3}$ be an open, bounded and Lipschitz domain.
We consider $ W_{0, \text{div}}^{1,p}(\Omega)$, the closure in $W^{1,p}(\Omega)$ of the set \[\left\{ \varphi|\ \varphi\in C_{c}^{\infty}\left(\Omega,\mathbb{R}^{3}\right),\ \text{div}\left(\varphi\right)=0\right\} .\]
%
%According to we have the following theorem:
%\begin{theorem}\label{thm:inverse-curl-sohr}
%Let $p\in (1,\infty)$, let $\Omega \subset \mathbb{R}^{3}$ be an open, simply connected, bounded subset of  with connected boundary that is (at least) of class $C^2$ and let $u \in  W_{0, \text{div}}^{1,p}(\Omega)$. 
% There exists $w\in W_{0}^{2,p}(\Omega)$ such that $\text{curl}\left(w\right)=u$ and $\Delta^{2}\text{div}\left(w\right)=0$. In particular
% \[
% \int_\Omega \abs{\nabla^2 w}^p\, \text{d}x\leq c\int_\Omega \abs{\nabla u}^p\, \text{d}x,
% \]
% where $c$ depends on $p,n$ and the domain.
% \end{theorem}

We consider  a function $u \in W_{0}^{1,p}(\Omega)$  and a domain $\Omega \subset \mathbb{R}^{3}$ which is  open, bounded,  with Lipschitz boundary. Then according to Theorem \ref{thm:weight-curl} there is a function $w\in W_{0}^{2,p}\left(\Omega\right)$ for which $\mathrm{curl}w=u$.  Without further notice we extend $w$ by zero outside $\Omega$. \\
Given $\mathcal{O}$ an open and proper set we can consider a Whitney covering and a related partition of unity as in the Proposition ~\ref{pro:cubes}. Then we define
\[w_{\mathcal{O}}:=\begin{cases}
w\left(x\right) & x\in\mathbb{R}^{3}\setminus\mathcal{O}\\
\sum_{i}\varphi_{i}w_{i} & x\in\mathcal{O}
\end{cases}\]
where \[w_{i}:=\begin{cases}
\left(\nabla w\right)_{\frac{3}{2}Q_{i}}\left(x-x_{i}\right)+\left(w-\left(\nabla w\right)_{\frac{3}{2}Q_{i}}\left(x-x_{i}\right)\right)_{\frac{3}{2}Q_{i}} & \mathrm{if}\ \frac{3}{2}Q_{i}\subset\Omega\\
0 & \mathrm{else}
\end{cases} \]
and \[
\left|\varphi_{i}\right|+r_{i}\left|\nabla\varphi_{i}\right|+r_{i}^{2}\left|\nabla^{2}\varphi_{i}\right|\le c\quad\mathrm{where}\ r_{i}:=\mathrm{diam}\left(Q_{i}\right).
\]
Finally we define the set of neighbors of $Q_i$ as $A_i:\{j:\{\phi_j(x)>0:x\in Q_i\}\neq \emptyset\}$. Such that
\[
w_{\mathcal{O}}(x)=\sum_{j\in A_{i}}\phi_j w_j\text{ for  $x\in Q_i$.}
\] 
\subsection{Properties of the relative truncation}

 In this section we prove the following:

\begin{theorem}[Relative truncation]\label{thm:main3}
Let $\mathcal{O}$ a nonempty, open and proper subset of $\mathbb{R}^{3}$. Let $u \in W_{0, \text{div}}^{1,p}(\Omega)$ where $\Omega$ is an open, bounded and Lipschitz subset of $\mathbb{R}^{3}$. Let $\omega\in \mathcal{A}_p$ be a Muckenhoupt weight.  Then there exists a function which we denote $u_{\mathcal{O}} \in W_{0,\text{div}}^{1,p}(\Omega)$ with the following properties:
\begin{equation}\label{eq: lip1}
u_{\mathcal{O}}=u\quad\text{on}\ \mathbb{R}^{3}\setminus\mathcal{O}
\end{equation}
\begin{equation}\label{eq:lip2}
\int_{\Omega}\left|\nabla\left(u-u_{\mathcal{O}}\right)\right|^{p}\text{d}x\le c\left(p\right)\int_{\Omega}\left|\nabla u\right|^{p}\text{d}x
\end{equation}
\begin{equation}\label{eq:lip3}
\int_{\Omega}\left|\nabla\left(u-u_{\mathcal{O}}\right)\right|^{p}\omega\text{d}x\le c\left(p,A_{p}\left(\omega\right)\right)\int_{\Omega}\left|\nabla u\right|^{p}\omega\text{d}x
\end{equation}
and for $q<p$ and $\omega\in A_q$ we find
 \begin{align}
 \label{eq:lip4}
\int_{\Omega}\left|\nabla\left(u-u_{\mathcal{O}}\right)\right|^{q}\omega \text{d}x & \leq c\left(q,A_{q}\left(\omega\right)\right)\omega(\mathcal{O})^{\frac{p-q}p} \bigg(\int_{\Omega}\left|\nabla u\right|^{p}\omega\text{d}x\bigg)^\frac{q}{p}.
\end{align}
\end{theorem}

The theorem is a consequence of the following  lemma.
\begin{lemma} 
\label{lem:RT1}
The following relations hold for $\omega\in A_p$
\begin{enumerate}[(a)]
\item\label{(a)} \[
\int_{\frac{3}{2}Q_{i}}\left|\frac{w-w_{i}}{r_{i}^{2}}\right|^{p}\omega\mathrm{d}x+\int_{\frac{3}{2}Q_{i}}\left|\frac{\nabla\left(w-w_{i}\right)}{r_{i}}\right|^{p}\omega\mathrm{d}x\leq c\left(p,A_p\right)\int_{\frac{3}{2}Q_{i}}\left|\nabla^{2}w\right|^{p}\omega\mathrm{d}x.
\]
\item\label{(b)} If $\abs{Q_{i} \cap Q_j }\neq 0$ then
\[
\int_{Q_{i}\cap \frac{3}{2} Q_j}\!\!\!\!\!\!\!\!\!\!\!\!\frac{\left|w_{j}-w_{i}\right|^{p}}{r_{j}^{2p}}\omega\mathrm{d}x+\int_{Q_{i}\cap \frac{3}{2} Q_j}\!\!\!\!\!\!\!\!\!\!\!\!\frac{\left|\nabla\left(w_{j}-w_{i}\right)\right|^{p}}{r_{j}^{p}}\omega\mathrm{d}x\le c\left(p,A_p\right)\int_{\frac{3}{2}Q_{i}}\left|\nabla^{2}w\right|^{p}\omega\mathrm{d}x+c\left(p,A_p\right)\int_{\frac{3}{2}Q_{i}}\left|\nabla^{2}w\right|^{p}\omega\mathrm{d}x.
\]
\item\label{(c)}  If $\abs{Q_{i} \cap Q_j }\neq 0$   then
\[
\frac{\left\Vert w_{j}-w_{i}\right\Vert _{L^{\infty}\left(Q_{i}\cap \frac{3}{2} Q_j\right)}}{r_{i}^{2}}+\frac{\left\Vert \nabla\left(w_{j}-w_{i}\right)\right\Vert _{L^{\infty}\left(Q_{i}\cap \frac{3}{2} Q_j\right)}}{r_{i}}\le c\fint_{\frac{3}{2}Q_{i}}\left|\nabla^{2}w\right|\mathrm{d}x+c\fint_{\frac{3}{2}Q_{j}}\left|\nabla^{2}w\right|\mathrm{d}x.
\]
\end{enumerate}
\end{lemma}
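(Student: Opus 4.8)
The plan is to reduce all three inequalities to the weighted Poincaré inequality~\eqref{eq:weightponc}, applied on a single Whitney cube, together with the metric and combinatorial properties of the Whitney family from Proposition~\ref{pro:cubes}; for part (c) I would moreover use that on cubes with $\tfrac32 Q_i\subset\Omega$ the corrector $w_i$ is affine, with constant gradient $(\nabla w)_{\frac32 Q_i}$. I would start with part (a). On a cube with $\tfrac32 Q_i\subset\Omega$ the function $w_i$ is built so that $\nabla w_i=(\nabla w)_{\frac32 Q_i}$ and $\int_{\frac32 Q_i}(w-w_i)\,\mathrm dx=0$, whence $\nabla(w-w_i)=\nabla w-(\nabla w)_{\frac32 Q_i}$ has vanishing mean over $\tfrac32 Q_i$. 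Applying~\eqref{eq:weightponc} componentwise to $\nabla w$ on the cube $\tfrac32 Q_i$ — after rescaling to the unit cube, where the constant is independent of $r_i$ because the Muckenhoupt constant is dilation invariant — gives $\int_{\frac32 Q_i}|\nabla(w-w_i)|^p\omega\,\mathrm dx\le c(p,A_p)\,r_i^{p}\int_{\frac32 Q_i}|\nabla^2 w|^p\omega\,\mathrm dx$, which after dividing by $r_i^{p}$ is the second term of (a). A second use of~\eqref{eq:weightponc}, now applied to $w-w_i$ itself, combined with the bound just obtained, yields $\int_{\frac32 Q_i}|w-w_i|^p\omega\,\mathrm dx\le c\,r_i^{2p}\int_{\frac32 Q_i}|\nabla^2 w|^p\omega\,\mathrm dx$, i.e.\ the first term. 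For a boundary cube, where $\tfrac32 Q_i\not\subset\Omega$ and hence $w_i=0$, I would instead use that the zero extension of $w\in W^{2,p}_0(\Omega)$ lies in $W^{2,p}(\Rbb^3)$ and that $w$ and $\nabla w$ both vanish on $\tfrac32 Q_i\cap\Omega^c$, a set of measure comparable to $|Q_i|$; the same two-step scheme then runs, with the weighted Poincaré inequality for functions vanishing on a subset of positive proportional measure replacing~\eqref{eq:weightponc}.

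Part (b) I would deduce from part (a) by the triangle inequality. On the overlap $Q_i\cap\tfrac32 Q_j$ I write $w_j-w_i=(w_j-w)+(w-w_i)$. Since $Q_i\cap\tfrac32 Q_j\subset\tfrac32 Q_i$ and $Q_i\cap\tfrac32 Q_j\subset\tfrac32 Q_j$, and since $r_i\simeq r_j$ by Proposition~\ref{pro:cubes}, each of the two resulting integrals is bounded by the integral over $\tfrac32 Q_i$, respectively $\tfrac32 Q_j$, of the corresponding quantity; part (a) then applies — to the $w-w_i$ piece directly, and to the $w_j-w$ piece with the roles of $i$ and $j$ exchanged. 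The $|w_j-w_i|$ term is handled identically. This produces exactly the right-hand side of (b) (the second occurrence of $\tfrac32 Q_i$ there evidently should read $\tfrac32 Q_j$).

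For part (c) I would use that, on good cubes, $w_j-w_i$ is affine, so $\|w_j-w_i\|_{L^\infty(Q_i\cap\frac32 Q_j)}$ is controlled by its average over $Q_i\cap\tfrac32 Q_j$ plus $r_i$ times the modulus of its constant gradient. The gradient term $|(\nabla w)_{\frac32 Q_j}-(\nabla w)_{\frac32 Q_i}|$ is estimated by the standard comparison of averages over the two overlapping, comparably sized cubes: inserting the average over $\tfrac32 Q_i\cap\tfrac32 Q_j$ and bounding $|(\nabla w)_{\frac32 Q_i}-(\nabla w)_{\frac32 Q_i\cap\frac32 Q_j}|\le c\fint_{\frac32 Q_i}|\nabla w-(\nabla w)_{\frac32 Q_i}|\le c\,r_i\fint_{\frac32 Q_i}|\nabla^2 w|$ by the unweighted Poincaré inequality on a cube, and symmetrically for the $j$-cube. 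The average of $|w_j-w_i|$ is split as in part (b) and bounded by $c\,r_i^2\fint_{\frac32 Q_i}|\nabla^2 w|+c\,r_j^2\fint_{\frac32 Q_j}|\nabla^2 w|$ using the unweighted form of the estimates from part (a). Collecting terms and dividing by $r_i$ and $r_i^2$ gives (c); boundary cubes are once more handled through the vanishing of $w$ and $\nabla w$ outside $\Omega$.

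I expect the genuine difficulty to be bookkeeping rather than conceptual. One must make sure that every Poincaré constant is truly independent of the Whitney cube, equivalently of $r_i$; in the weighted estimates this is precisely where the dilation invariance of the $A_p$-constant enters, while in the unweighted estimate behind part (c) it is automatic. The second delicate point is treating the boundary cubes uniformly: there $w_i$ degenerates to $0$, and the bounds have to be recovered from the zero boundary values of $w\in W^{2,p}_0(\Omega)$, i.e.\ from the vanishing of $w$ and $\nabla w$ on the part of $\tfrac32 Q_i$ lying outside $\Omega$.
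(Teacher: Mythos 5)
Your proposal follows essentially the same route as the paper: part (a) by two applications of the weighted Poincar\'e inequality \eqref{eq:weightponc} (using that $w-w_i$ and $\nabla(w-w_i)$ have vanishing means on $\tfrac{3}{2}Q_i$, with constants independent of $r_i$ by scaling), part (b) by the triangle inequality from (a), and part (c) by exploiting that $w_j-w_i$ is affine so that the $L^\infty$ norm is controlled by integral averages --- the paper phrases this as equivalence of norms on the finite-dimensional space of affine functions, you phrase it via comparison of mean gradients over the overlapping comparable cubes, which is the same mechanism. The only place you go beyond the paper is the boundary cubes where $w_i=0$; there your assertion that $\left|\tfrac{3}{2}Q_i\cap\Omega^c\right|$ is comparable to $\left|Q_i\right|$ is not automatic (the cube may overlap $\Omega^c$ only in a thin sliver) and should instead be derived from the exterior measure-density of the Lipschitz domain on a slightly dilated cube, a case on which the paper's own proof is silent.
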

\begin{proof}
\begin{enumerate}[(a)]
\item We just apply weighted Poincare's inequality~\eqref{eq:weightponc} twice to obtain \[
\int_{\frac{3}{2}Q_{i}}\left|\frac{w-w_{i}}{r_{i}^{2}}\right|^{p}\omega\mathrm{d}x\le c\left(p\right)\int_{\frac{3}{2}Q_{i}}\left|\frac{\nabla\left(w-w_{i}\right)}{r_{i}}\right|^{p}\omega\mathrm{d}x\le c\left(p,A_p\right)\int_{\frac{3}{2}Q_{i}}\left|\nabla^{2}w\right|^{p}\omega\mathrm{d}x.
\]
\item Follows by  applying  the triangle's inequality and \eqref{(a)}.

\item Notice that \[\left\Vert w_{j}-w_{i}\right\Vert _{L^{\infty}\left(Q_{i}\cap \frac32Q_j\right)}\sim\fint_{Q_{i}\cap \frac32Q_{j}}\left|w_{j}-w_{i}\right|^{p}\mathrm{d}x\]  due to the equivalence of the norms $L^\infty$ and $L^{p}$ on the finite dimensional space of linear polynomials. A similar estimate holds for the second term. We add  them   and apply  \eqref{(b)} .
\end{enumerate}
\end{proof}

\begin{proof}[Proof of Theorem~\ref{thm:main3}]
We define $u_\mathcal{O}:=\mathrm{curl}w_\mathcal{O}$. To conclude the proof of Theorem ~\ref{thm:main3} we have to check that the properties \eqref{eq: lip1}-\eqref{eq:lip3} are fulfilled. To this end notice that \eqref{eq: lip1} is immediate as the relation $\mathrm{div}u_\mathcal{O}=0$ follows by construction. The zero boundary values are due to the fact that $w_{\mathcal{O}}\equiv 0$ outside $\Omega$.
% $\partial\Omega \cap \mathcal{O}$  $w=0$ on $\partial\Omega$ while $w_i=0$ on  $Q_i\cap \partial\Omega$. 
To prove \eqref{eq:lip2} we estimate
\begin{align*}
\int_{\Omega}\left|\nabla\left(u-u_{\mathcal{O}}\right)\right|^{p}\mathrm{d}x & =\int_{\Omega\cap\mathcal{O}}\left|\nabla\mathrm{curl}\left(w-w_{\mathcal{O}}\right)\right|^{p}\mathrm{d}x\\
 & =\sum_{i}\int_{Q_{i}}\left|\nabla\mathrm{curl}\left(w-\sum_{j\in A_{i}}w_{j}\varphi_{j}\right)\right|^{p}\mathrm{d}x\\
 & =\sum_{i}\int_{Q_{i}}\left|\nabla\mathrm{curl}\left(\sum_{j\in A_{i}}\left(w-w_{j}\right)\varphi_{j}\right)\right|^{p}\mathrm{d}x\\
 & \le c\left(p\right)\sum_{i}\int_{Q_{i}}\left|\nabla^{2}\left(\sum_{j\in A_{i}}\left(w-w_{j}\right)\varphi_{j}\right)\right|^{p}\mathrm{d}x\\
 & \le c\left(p\right)\sum_{i}\sum_{j\in A_{i}}\int_{Q_{i}\cap \frac{3}{2}Q_j}\left|\nabla^{2}\left(\left(w-w_{j}\right)\varphi_{j}\right)\right|^{p}\mathrm{d}x.
\end{align*}
Further, we estimate pointwisely 
\begin{align}
\label{eq:pointwise}
\begin{aligned}
\left|\nabla^{2}\left(\left(w-w_{j}\right)\varphi_{j}\right)\right|^{p} & \le\left(\left|\nabla^{2}w\right|\cdot\left|\varphi_{j}\right|+\left|\nabla\left(w-w_{j}\right)\right|\cdot\left|\nabla\varphi_{j}\right|+\left|w-w_{j}\right|\cdot\left|\nabla^{2}\varphi_{j}\right|\right)^{p}
\\
 & \le c\left(p\right)\left(\left|\nabla^{2}w\right|^{p}+\frac{\left|\nabla\left(w-w_{j}\right)\right|^{p}}{r_{j}^{p}}+\frac{\left|w-w_{j}\right|^{p}}{r_{j}^{2p}}\right).
 \end{aligned}
\end{align}
Then we find by the previous lemma to obtain 
 \begin{align*}
\int_{\Omega}\left|\nabla\left(u-u_{\mathcal{O}}\right)\right|^{p}\mathrm{d}x & \le\sum_{i}\sum_{j\in A_{i}}c\left(p\right)\left(\int_{\frac{3}{2}Q_{j}}\left|\nabla^{2}w\right|^{p}\mathrm{d}x+\int_{\frac{3}{2}Q_{j}}\left|\nabla^{2}w\right|^{p}\mathrm{d}x\right)\\
 & \le c\left(p\right)\int_{\mathcal{O}\cap\Omega}\left|\nabla^{2}w\right|^{p}\mathrm{d}x\\
 & \le c\left(p\right)\int_{\Omega}\left|\nabla u\right|^{p}\mathrm{d}x.
\end{align*}
By using the weighted estimates we find in the same manner 
 \begin{align*}
\int_{\Omega}\left|\nabla\left(u-u_{\mathcal{O}}\right)\right|^{p}\omega\text{d}x & \le c\left(p\right)\int_{\mathcal{O}\cap\Omega}\left|\nabla^{2}w\right|^{p}\omega\text{d}x\\
 & \le c\left(p,A_{p}\left(\omega\right)\right)\int_{\Omega}\left|\nabla u\right|^{p}\omega\text{d}x
\end{align*}
which  concludes \eqref{eq:lip3} for \eqref{eq:lip4} we take $q<p$ and find by the previous and H\"older's inequality that
 \begin{align*}
\int_{\Omega}\left|\nabla\left(u-u_{\mathcal{O}}\right)\right|^{q}\omega\text{d}x & \lesssim \int_{\mathcal{O}\cap\Omega}\left|\nabla^{2}w\right|^{q}\omega\text{d}x
\\
&\leq \omega(\mathcal{O})^{\frac{p-q}{p}} c\left(q\right)\bigg(\int_{\mathcal{O}\cap\Omega}\left|\nabla^{2}w\right|^{p}\omega\text{d}x\bigg)^\frac{q}{p}
\end{align*}
which ends the proof.
\end{proof}

\subsection{Proof of Theorem~\ref{truncation}}
\label{proof:LT}
We will follow the approach from ~\cite[p. 27-28]{BDS13} .
For any  $\lambda >0$ we set
  $$\left\{M\left(\nabla^{2}w\chi_{\Omega}\right)>\lambda\right\} =:\mathcal{O}_{\lambda}$$
where $M$ is the Hardy - Littlewood maximal operator.
The set $\mathcal{O}_{\lambda}$ is the so-called "bad" set, where the possible  singularities of the function $w$ are contained. We define $w_\lambda:=w_{\mathcal{O_\lambda}}$ defined via Theorem~\ref{thm:main3}
and the {\bf solenoidal Lipschitz truncation of $u$} as 
\[
u_{\lambda}:=\text{curl}\left(w_{\lambda}\right).
\]
We have that $\text{div}u_{\lambda}=\text{div}\text{curl}\left(w_{\lambda}\right)=0$ in $\Omega$. According to Theorem-\ref{thm:main3} we have that $\nabla w_{\lambda}=0$ on $\partial\Omega$ , so $u_{\lambda}=0$ on $\partial\Omega$. 

For $j \in \mathbb{N}$ we find that (by Proposition~\ref{pro:cubes}) $Q_{j} \subset 16 Q_{j}$ and $16Q_{j} \cap \mathcal{O}_{\lambda}^{c} \neq \emptyset$. It follows that $\fint_{16Q_{j}}\left|\nabla^{2}w\right|\mathrm{d}x\le\lambda$ and then  $\fint_{\frac32Q_{j}}\left|\nabla^{2}w\right|\mathrm{d}x\le c\lambda$.
%$\fint_{\frac32Q_{j}}\left|\nabla^{2}w\right|\chi_\Omega \mathrm{d}x\le c\lambda$.
Hence for $x\in \mathcal{O}_\lambda$ using the assumptions on $\psi_j$ and Lemma~\ref{lem:RT1} that
\[
\abs{\nabla^2w(x)}\leq \bigg|\sum_{j\in A_i}\nabla^2(\psi_j(w_j-w_i))\bigg|\leq c\lambda.
\]
Hence
$$
\left|\nabla u_{\lambda}\right|\le 2\abs{\nabla^2 w_\lambda}\le c\lambda.
$$
Now due to the weak $L^1$ estimate for Maximal functions and the $L^p$ bounds of $\nabla^2 w$ we find that
\[
\abs{\mathcal{O}}\lesssim \lambda^{-p}\int_{\Omega}\abs{\nabla u}^p\, \mathrm{d}x.
\] 
Hence the gradient bounds in $L^p_\omega$ and in $L^q$ follow directly from Theorem~\ref{thm:main3}.

Finally we would like to prove that $\left\Vert u_{\lambda}\right\Vert _{L^{q}\left(\Omega\right)}\le c\left\Vert u\right\Vert _{L^{q}\left(\Omega\right)}$ , if $u\in L^{q}\left(\Omega\right)$.
In order to prove this, notice that the only relevant situation is to prove the estimates under $L^{q}\left(\mathcal{O}_{\lambda}\cap\Omega\right)$ . Now because $\sum _{i} \varphi_{i}=1$ it is immediate to check that $\displaystyle u_{\lambda}-u=\sum_{j\in\mathbb{N}}\text{curl}\left(\varphi_{j}\left(w_{j}-w\right)\right)$ and then 
\begin{align*}
\left|u_{\lambda}-u\right| & \le\sum_{j}\left|\text{curl}\left(\varphi_{j}\left(w_{j}-w\right)\right)\right|\lesssim\sum_{j}\left|\nabla\left(\varphi_{j}\left(w_{j}-w\right)\right)\right|\\
 & \lesssim\sum_{j}\left(\left|\nabla\varphi_{j}\right|\left|w_{j}-w\right|+\left|\varphi_{j}\right|\left|\nabla\left(w_{j}-w\right)\right|\right)\\
 & \lesssim\sum_{j}\left(\frac{\left|w_{j}-w\right|}{r_{j}}+\left|\nabla\left(w_{j}-w\right)\right|\right)
\end{align*}
If we integrate the last inequality we will obtain that 
\begin{align*}
\left\Vert u_{\lambda}-u\right\Vert _{L^{q}\left(\mathcal{O}_{\lambda}\cap\Omega\right)}^{q} & \lesssim\sum_{j}\int_{Q_{j}\cap\Omega}\frac{\left|w_{j}-w\right|}{r_{j}}+\left|\nabla\left(w_{j}-w\right)\right|\mathrm{d}x\\
 & \lesssim\sum_{j}\int_{Q_{j}\cap\Omega}\left|\nabla w\right|^{q}\mathrm{d}x
\lesssim\left\Vert u\right\Vert _{L^{q}(\Omega)}^{q};
\end{align*} so $\left\Vert  u_{\lambda}\right\Vert _{L^{q}\left(\Omega\right)}\le c\left\Vert  u\right\Vert _{L^{q}\left(\Omega\right)}$.

\subsection*{Acknowledgments}
C.\ M.\ and  S.\ S.\ thank the support of the research program PRIMUS/19/SCI/01 of Charles University and the support of the program GJ19-11707Y of the Czech national grant agency (GA\v{C}R). They also thank the support of the grant SVV-2019 -260455. S.\ S.\ thanks the the support of the university centre UNCE/SCI/023 of Charles University.

%%%%%%%%%%%%%%%%%%%%%%%%%%%%%%%%%%%%
 %\section{Bibliography-just for editing, then delete}
{\small

\bibliographystyle{plain}

%}

\end{document}